\documentclass[english]{amsart}
\usepackage{amsfonts,amssymb,amsmath,amsgen,amsthm}
\usepackage{hyperref,color}
\usepackage{pdfsync}
\usepackage{mathtools}

\makeatletter
%%%%%%%%%%%%%%%%%%%%%%%%%%%%%% Textclass specific LaTeX commands.
\theoremstyle{plain}
\newtheorem{thm}{\protect\theoremname}
\theoremstyle{definition}
\newtheorem{defn}[thm]{\protect\definitionname}
\theoremstyle{remark}
\newtheorem{rem}[thm]{\protect\remarkname}
\theoremstyle{plain}
\newtheorem{lem}[thm]{\protect\lemmaname}
\theoremstyle{plain}
\newtheorem{prop}[thm]{\protect\propositionname}
\newtheorem{corollary}{Corollary}

\makeatother

\usepackage{babel}
\providecommand{\definitionname}{Definition}
\providecommand{\lemmaname}{Lemma}
\providecommand{\propositionname}{Proposition}
\providecommand{\remarkname}{Remark}
\providecommand{\theoremname}{Theorem}

% complex numbers
\def\R{{\mathbf R}}% real numbers
% torus
\def\N{{\mathbf N}}% nonnegative integers
% integers
\def\d{{\partial}}
\def\eps{\varepsilon}
\DeclareMathOperator{\RE}{Re}
\DeclareMathOperator{\IM}{Im}
\DeclareMathOperator{\diver}{div}

\numberwithin{equation}{section}

\date\today

\title[1D QHD]{Genuine Hydrodynamic Analysis to the 1-D QHD system: Existence, Dispersion and Stability}

\author{Paolo Antonelli}
\address{Gran Sasso Science Institute, viale Francesco Crispi, 7, 67100 L'Aquila}
\email{paolo.antonelli@gssi.it}

\author{Pierangelo Marcati}
\address{Gran Sasso Science Institute, viale Francesco Crispi, 7, 67100 L'Aquila}
\email{pierangelo.marcati@gssi.it}

\author{Hao Zheng}
\address{Gran Sasso Science Institute, viale Francesco Crispi, 7, 67100 L'Aquila}
\email{hao.zheng@gssi.it}

\subjclass{Primary: 35Q40; Secondary: 76Y05, 35Q35, 82D50.}
 \keywords{Quantum Hydrodynamics, global existence, stability, finite energy, chemical potential}

\begin{document}
\begin{abstract}
In this paper we consider a genuinely hydrodynamic approach for the one dimensional quantum hydrodynamics (QHD) system.  
In the recent years, the global existence of weak solutions with large data has been obtained in  \cite{AM1, AM2}, in several space dimensions, by using the connection between the hydrodynamic variables and the Schr\"odinger wave function. 
One of the main purposes of the present paper is to overturn this point of view, avoiding the need to postulate the a priori existence of a wave function that generates the hydrodynamic data.  In a first result, we are able to demonstrate the existence of finite energy weak solutions with large data.   Next we introduce a functional based on a new notion of generalized chemical potential, that allows us to prove the existence of more regular weak solutions under the assumption that the initial data have a finite generalized chemical potential and that the energy density is continuous on the boundaries of the vacuum set. 
These solutions enjoy an appropriate entropy inequality. We then obtain dispersive estimates for which the mass densities vanish and the speed formally tends to a rarefaction wave with a mechanism reminiscent of the Landau Damping.  We are also able to show, by using genuinely hydrodynamic arguments,  that for finite energy and finite mass weak solutions Morawetz-type estimates hold  and therefore by means of these estimates, in the case of solutions that have bounded generalized chemical potential and satisfies the entropy inequality, we get additional regularity.  
By virtue of these properties it is possible to obtain a result of strong stability in the energy norm, in the class of solutions with finite mass, finite energy and bounded generalized chemical potential.
Moreover we also analyze the possibility of extending the generalized chemical potential and a Radon measure that, in the atomic parts,  takes into account the potential differences on the phase boundaries.
\end{abstract}
\maketitle
\section{Introduction}\label{sect:intro}
This paper is concerned with the one dimensional quantum hydrodynamics (QHD) system
\begin{equation}\label{eq:QHD}
\left\{\begin{aligned}
&\d_t\rho+\d_xJ=0\\
&\d_tJ+\d_x\left(\frac{J^2}{\rho}\right)+\d_x p(\rho)=
\frac12\rho\d_x\left(\frac{\d^2_{x}\sqrt{\rho}}{\sqrt{\rho}}\right).
\end{aligned}\right.
\end{equation}

The unknowns $\rho$ and $J$ in \eqref{eq:QHD} represent the mass and momentum densities of the fluid, respectively.
$p(\rho)=\frac{\gamma-1}{\gamma}\rho^\gamma$ is the pressure term, which for convenience we assume to satisfy a $\gamma-$law with $1<\gamma<\infty$, although a more general pressure law can be taken into consideration. Contrarily to classical compressible fluids, here we do not need to assume convexity of the internal energy, see the Remark \ref{rem:gen_press} below. The term on the right hand side of the equation for the momentum density is a third order tensor  which represents a quantum effective potential and takes into account the quantum effects in the fluid. At the mathematical level, this term induces non negligible dispersive phenomena that change substantially the analysis with respect to classical newtonian fluids.
This system is used in physics to describe a compressible, inviscid fluid where quantum effects appear at a macroscopic scale and thus they need to be taken into account also in the hydrodynamical description. 
It is for instance the case when studying phenomena in superfluidity \cite{Khal}, Bose-Einstein condensation \cite{PS} or in the modeling of semiconductor devices at nanoscales \cite{Gar}.

In this paper we study the existence of weak solutions to \eqref{eq:QHD} in the energy space. Furthermore, we also show the stability of weak solutions in a regularity class determined by the introduction of a novel functional, see \eqref{eq:higher} below. In particular our stability result is given for arbitrarily large solutions, which are not required to be small perturbations of constant solutions. To our knowledge, this is the first result of this kind in such a class of solutions. In order to achieve the aforementioned result we will also present some a priori estimates which can also be analysed in different contexts, e.g. for general multidimensional Euler-Korteweg systems, see \eqref{eq:EK} below.

The energy associated to \eqref{eq:QHD}, formally conserved along the flow of solutions, reads
\begin{equation}\label{eq:en_QHD}
E(t)=\int_\R\frac12(\d_x\sqrt{\rho})^2+\frac{J^2}{2\rho}+f(\rho)\,dx,
\end{equation}
where the internal energy $f(\rho)$ is related to the pressure term by the relation 
$f(\rho)=\rho\int_0^\rho\frac{p(s)}{s^2}\,ds$.\\
The natural bounds given by \eqref{eq:en_QHD} imply that the only available control for the velocity field $v$ defined by $J=\rho v$ is in $L^2$ with respect to the measure $\rho\,dx$. In particular, there is no control of the velocity field in the vacuum region. For this reason, it turns out that, in order to deal with finite energy weak solutions to \eqref{eq:QHD}, it is more convenient to consider the unknowns $(\sqrt{\rho}, \Lambda)$, which define the hydrodynamic observables by $\rho=(\sqrt{\rho})^2$, $J=\sqrt{\rho}\Lambda$, see the Definitions \ref{def:FEWS} and \ref{def:FEWS_2} below for more details. More precisely, we will often refer to $(\sqrt{\rho}, \Lambda)$ as the hydrodynamic state variables of the system, while we will refer to the pair $(\rho, J)$ as the usual macroscopic hydrodynamic variables, which solve the system \eqref{eq:QHD}. \\
The system \eqref{eq:QHD} enjoys an interesting analogy with nonlinear Schr\"odinger equations, established through the Madelung transformations \cite{Mad}. Namely, given a wave function it is possible to associate to it a set of hydrodynamical variables. This analogy was rigorously set up and exploited in \cite{AM1, AM2} in order to prove the global existence of finite energy weak solutions to \eqref{eq:QHD} without any smallness or regularity assumptions. In particular, in \cite{AM1, AM2} the authors develop a polar factorization technique which overcome the difficulty of defining the velocity field in the vacuum region and allows to define the quantities $(\sqrt{\rho}, \Lambda)$ almost everywhere.\\
The first result in our paper shows that in the one dimensional case it is in fact possible to do also the opposite, namely given a set of finite energy hydrodynamical quantities $(\sqrt{\rho}, \Lambda)$ such that $\Lambda$ vanishes in the vacuum region, it is possible to define an associated wave function. Roughly speaking, if one thinks at the Madelung transform as a map from the space of (finite energy) wave functions to the space of (finite energy) hydrodynamic quantities, then we show that in 1D under some reasonable assumptions it is possible to invert this map. 
Moreover, we will show that it is possible to draw this analogy also in the framework of $H^2$ wave functions, by defining suitable hydrodynamic quantities, see Proposition \ref{prop:lift2} in Section \ref{sect:lift}.
Our arguments cannot be easily generalized to the higher dimensional case, as there one should need more structural assumptions on the hydrodynamical quantities, such as the generalized irrotationality condition (see Definition 1 in \cite{AM1} and subsequent Remark).
By inverting the Madelung trasform it is then possible to show that, given a set of finite energy initial data $(\sqrt{\rho_0}, \Lambda_0)$ such that $\Lambda_0$ vanishes on the vacuum region $\{\sqrt{\rho_0}=0\}$, there exists a global in time finite energy weak solution to the QHD system \eqref{eq:QHD} in one space dimension. 
In contrast with the results in \cite{AM1, AM2}, here we do not require the initial data to be Schr\"odinger-generated. As it will be clear in Section \ref{sect:lift}, in order to construct a weak solution we will anyway exploit the analogy with the Schr\"odinger dynamics, but no assumptions of this type need to be made on the initial data.\\
The second main problem addressed in this paper regards the stability of weak solutions. 
More precisely, we identify a class of weak solutions to \eqref{eq:QHD} enjoying a compactness property, namely for every sequence of weak solutions which satisfies suitable uniform bounds, it is possible to extract a subsequence converging to a weak solution. As previously said, to our knowledge this is the first stability result for system \eqref{eq:QHD} dealing with large, rough solutions.
If we restrict our analysis to Schr\"odinger-generated solutions, then by the polar factorization we could exploit a wide class of smoothing estimates available for nonlinear Schr\"odinger evolutions (e.g. Strichartz and Kato type estimates) in order to prove compactness of sequences of such solutions. On the other hand, under some regularity assumptions and by assuming the positivity of the mass density, it could be possible to apply relative entropy methods \cite{GT, GLT} or classical energy methods \cite{LM, JMR} to study the stability of solutions. However for arbitrary solutions to \eqref{eq:QHD} fewer estimates are available. In our paper we first provide a class of dispersive estimates yielding some information about the asymptotic behavior of solutions. As it will be also remarked later, such estimates hold true also for the multi dimensional case and even for the more general class of Euler-Korteweg systems.\\
Moreover, we introduce a novel functional, see \eqref{eq:higher} below that is (formally) uniformly bounded along the flow determined by \eqref{eq:QHD}. The set of states for which the functional is finite, will determine  the class of solutions for which we are able to prove our stability result.\\
To define the new functional, we start by considering the following chemical potential
\begin{equation}\label{eq:chem}
\mu=-\frac{1}{2}\frac{\partial_{x}^{2}\sqrt{\rho}}{\sqrt{\rho}}+\frac{1}{2}v^{2}+f'(\rho).
\end{equation} 
Formally it is possible to interpret $\mu$ as the first variation of the total energy functional with respect to the mass density \cite{GLT}, 
\[
\mu=\frac{\delta E}{\delta \rho}.
\]
As we will see later, the chemical potential $\mu$ cannot be used to carry out a satisfactory mathematical analysis in the framework of weak solutions to \eqref{eq:QHD_1d}. For this reason it will be more convenient to consider 
\begin{equation*}
\xi:= \rho\mu=-\frac14\d_{x}^2\rho+\frac12(\d_x\sqrt{\rho})^2+\frac12\Lambda^2+\rho f'(\rho).
\end{equation*}
The functional we are going to define below in fact gives a control on the quantity $\frac{\xi^2}{\rho}$, for this reason we are led to introduce a ``generalized chemical potential'' $\lambda\in L^2(\R)$, see also the Definition \ref{def:lambda} below, which formally equals $\lambda=\sqrt\rho\mu$ and is implicitly defined by
\begin{equation}\label{eq:lambda_intro}
\sqrt{\rho}\lambda=-\frac14\d_{x}^2\rho+e+p(\rho),
\end{equation}
where
\begin{equation}\label{eq:en_dens}
e=\frac12(\d_x\sqrt{\rho})^2+\frac12\Lambda^2+f(\rho)
\end{equation}
is the total energy density. A more rigorous definition of $\lambda$ will be given in Section \ref{sect:lambda}, see Proposition \ref{prop:lambda} therein.
The functional we consider in our study thus reads
\begin{equation}\label{eq:higher}
I(t)=\int\lambda^2+(\d_t\sqrt{\rho})^2\,dx.
\end{equation}
In this way we see that the finiteness of the functional $I(t)$  formally implies that the chemical potential is square integrable with respect to the quantum probability measure, namely $\mu\in L^2(\rho\,dx)$. However we stress again that the only object that is rigorously defined is $\lambda$, see Section \ref{sect:lambda} for more details.
\newline
Let us remark that for Schr\"odinger-generated hydrodynamical momenta, say $\rho=|\psi|^2$ and $J=\IM(\bar\psi\d_x\psi)$, the functional \eqref{eq:higher} actually equals
\begin{equation*}
I(t)=\int|\d_t\psi|^2\,dx.
\end{equation*}
Thus for NLS evolutions, we could think of the functional as providing a $H^2$ control for the wave function.

The quantities $\lambda$ and $\d_t\sqrt{\rho}$ formally allow to write the evolution equation for the energy density \eqref{eq:en_dens} in the case of smooth solutions, i.e.
\begin{equation}\label{eq:en_diff}
\d_te+\d_x(\Lambda\lambda-\d_t\sqrt{\rho}\d_x\sqrt{\rho})=0.
\end{equation}
In Section \ref{sect:apri} we will show that, by assuming that the following weak entropy inequality
\begin{equation}\label{eq:entr_ineq}
\d_te+\d_x(\Lambda\lambda-\d_t\sqrt{\rho}\d_x\sqrt{\rho})\le0
\end{equation}
holds in the distributional sense,
it is possible to infer a Morawetz type estimate which will yield an improved space-time control in $L^2$ for the energy density $e$, see  Proposition \ref{prop:unif0}.
Combining this bound together with the ones given by \eqref{eq:en_QHD} and \eqref{eq:higher} it is then possible to infer a compactness result for weak solutions to \eqref{eq:QHD}.

We can now present the main results of this paper. First of all we are going to provide a global existence result for finite energy initial data, which exploits the wave function lifting.
This allows us to remove the assumption on the initial data to be consistent with a wave function, which was needed in \cite{AM1, AM2}.

\begin{thm}[Global Existence of finite energy weak solutions]\label{thm:glob} 
Let us consider a pair of finite energy initial data $(\sqrt{\rho_0}, \Lambda_0)$,
\begin{equation}\label{eq:C1_intro}
\|\sqrt{\rho_0}\|_{H^1}+\|\Lambda_0\|_{L^2}\leq M_1
\end{equation}
and let us further assume that $\Lambda_0=0$ a.e. on the set $\{\rho_0=0\}$. Then there exists a global in time finite energy weak solution to the Cauchy problem \eqref{eq:QHD} which conserves the total energy for all times, $E(t)=E(0)$. In particular we have
\begin{equation}\label{eq:B1_1}
\|\sqrt{\rho}\|_{L^\infty(0, T;H^1(\R))}+\|\Lambda\|_{L^\infty(0, T;L^2(\R))}\leq M_1.
\end{equation}
\end{thm}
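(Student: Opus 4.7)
The strategy is to reduce the existence problem for \eqref{eq:QHD} to the global well-posedness of a nonlinear Schr\"odinger equation via the wave function lifting procedure developed in Section~\ref{sect:lift}. The compatibility assumption $\Lambda_0 = 0$ a.e.\ on $\{\rho_0 = 0\}$ in the hypothesis is precisely what enables this lifting.

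First I would apply the one-dimensional wave function lifting (Proposition \ref{prop:lift2}) to $(\sqrt{\rho_0},\Lambda_0)$ to obtain $\psi_0 \in H^1(\R)$ satisfying $|\psi_0|^2 = \rho_0$ and $\IM(\bar\psi_0\,\d_x\psi_0) = \sqrt{\rho_0}\Lambda_0$, with the polar factorization identity giving $\|\psi_0\|_{H^1}^2 = \|\sqrt{\rho_0}\|_{H^1}^2 + \|\Lambda_0\|_{L^2}^2 \le M_1^2$. Next I would solve the defocusing Cauchy problem
\[
i\d_t\psi = -\tfrac12 \d_x^2 \psi + f'(|\psi|^2)\psi, \qquad \psi(0) = \psi_0,
\]
whose Madelung reduction formally produces \eqref{eq:QHD} because $\rho\,\d_x f'(\rho) = \d_x p(\rho)$. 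For every $\gamma \in (1,\infty)$ this nonlinearity is $H^1$-subcritical in one space dimension, and the standard Strichartz/contraction scheme together with conservation of mass and of the Schr\"odinger energy
\[
E_{\mathrm{NLS}}(t) = \int_\R \tfrac12|\d_x\psi|^2 + f(|\psi|^2)\,dx
\]
yields a unique global solution $\psi \in C(\R; H^1(\R))$ with $\|\psi(t)\|_{H^1}$ uniformly controlled by $\|\psi_0\|_{H^1}$.

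Having $\psi(t)$, I would set $\rho(t) = |\psi(t)|^2$, $J(t) = \IM(\bar\psi(t)\,\d_x\psi(t))$ and invoke the polar factorization of \cite{AM1, AM2} to define $\Lambda(t) \in L^2(\R)$ a.e., vanishing on $\{\rho = 0\}$, satisfying $J = \sqrt{\rho}\,\Lambda$ and $|\d_x\psi|^2 = (\d_x\sqrt{\rho})^2 + \Lambda^2$ almost everywhere. Testing the NLS equation against smooth compactly supported functions, the imaginary part of $\bar\psi\,i\d_t\psi$ delivers the continuity equation, while the real part, rewritten through the Madelung identity for $\RE(\d_x\bar\psi\,\d_x^2\psi)$, produces the momentum equation with the Bohm quantum pressure $\tfrac12\rho\,\d_x(\d_x^2\sqrt{\rho}/\sqrt{\rho})$; the only delicate point is crossing the vacuum set, handled by a cutoff-and-pass-to-limit argument in the spirit of \cite{AM1, AM2}. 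Since $E_{\mathrm{NLS}}(t) = E(t)$ via the same polar factorization identity, conservation of $E_{\mathrm{NLS}}$ for $H^1$ NLS solutions finally yields both $E(t) = E(0)$ and the uniform bound \eqref{eq:B1_1}.

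The main obstacle is the lifting step itself: reconstructing $\psi_0$ from $(\sqrt{\rho_0},\Lambda_0)$ requires defining a coherent phase across the vacuum components of $\{\rho_0 = 0\}$, and this is exactly where the one-dimensional structure (connected components of $\{\rho_0 > 0\}$ being intervals) and the vanishing of $\Lambda_0$ on $\{\rho_0 = 0\}$ are indispensable. Once Proposition \ref{prop:lift2} is granted, the remainder of the argument amounts to a direct transfer of standard NLS global well-posedness through the polar factorization framework already developed in \cite{AM1, AM2}.
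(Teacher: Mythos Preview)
Your strategy is correct and matches the paper's proof exactly: lift the hydrodynamic data to a wave function, solve the defocusing NLS globally in $H^1$, and read off the weak solution via polar factorization (the paper packages the last step as Proposition~\ref{prop:old_1d} rather than re-deriving the weak formulation, but the content is the same).

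One correction: the lifting result you need here is Proposition~\ref{prop:lift1}, not Proposition~\ref{prop:lift2}. Proposition~\ref{prop:lift2} is the $H^2$ lifting, which requires the additional GCP hypotheses and continuity of the energy density that are \emph{not} assumed in Theorem~\ref{thm:glob}. What you actually describe---obtaining $\psi_0\in H^1(\R)$ from finite energy data with $\Lambda_0=0$ on the vacuum set---is precisely the content of Proposition~\ref{prop:lift1}, so this is only a mislabeling, but you should fix the reference.
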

Furthermore, we are able to construct a family of more regular weak solutions. As we will see, the result we establish is not given in terms of the Sobolev regularity for $(\sqrt{\rho}, \Lambda)$ but rather in terms of the quantities involved in the analysis of the functional \eqref{eq:higher}.

Next Theorem deals with the global existence of weak solutions for initial data determined by GCP states (see the Definition \ref{def:GCP}) with continuous energy density.
\begin{thm}[Global existence of regular weak solutions]\label{thm:glob2}
Let us consider a pair of finite energy initial data $(\sqrt{\rho_0}, \Lambda_0)$ satisfying the same hypotheses of Theorem \ref{thm:glob} and let us further assume:
\begin{itemize}
\item $\d_x^2\rho_0\in L^1_{loc}(\R)$, $\d_x J_0\in L^1_{loc}(\R)$;
\item the initial energy density $e_0:=\frac12(\d_x\sqrt{\rho_0})^2+\frac12\Lambda_0^2+f(\rho_0)$ is continuous;
\item the initial data satisfy the bounds
\begin{equation}\label{eq:C2_intro}
\|(\frac{\Lambda_0^2}{\sqrt{\rho_0}}-\d_x^2\sqrt{\rho_0})\mathbf{1}_{\{\rho_0>0\}}\|_{L^2}
+\|\frac{\d_xJ_0}{\sqrt{\rho_0}}\mathbf{1}_{\{\rho_0>0\}}\|_{L^2}\leq M_2.
\end{equation}
\end{itemize}
Then the solution to \eqref{eq:QHD} constructed in Theorem \ref{thm:glob} further enjoys the following properties:
\begin{itemize}
\item there exists a measurable function $\lambda$ satisfying the identity \eqref{eq:lambda_intro} a.e. $(t, x)\in\R^2$, such that for any $0<T<\infty$,
\begin{equation}\label{eq:B2_1}
\|\d_t\sqrt{\rho}\|_{L^\infty(0, T;L^2(\R))}+\|\lambda\|_{L^\infty(0, T;L^2(\R))}\leq C(T, M_1, M_2);
\end{equation}
\item for any $0<T<\infty$ we have
\begin{equation}\label{eq:B2_c}
\|\rho\|_{L^\infty(0, T;H^2(\R))}+\|J\|_{L^\infty(0, T;H^1(\R))}+\|\sqrt{e}\|_{L^\infty(0, T;H^1(\R))}\leq C(T, M_1, M_2),
\end{equation}
where $e$ is the energy density defined in \eqref{eq:en_dens};
\item the solution satisfies the entropy identity \eqref{eq:en_diff} in the sense of distributions.
\end{itemize}
\end{thm}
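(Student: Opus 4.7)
The overall strategy is to upgrade the wave-function-lifting argument of Theorem \ref{thm:glob} from the $H^1$ to the $H^2$ level: construct $\psi_0\in H^2(\R)$ compatible with $(\sqrt{\rho_0},\Lambda_0)$, propagate it by the associated nonlinear Schr\"odinger equation, and read off the claimed identities and bounds for the QHD quantities.

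To see why the hypotheses are exactly right for an $H^2$ lift, formally write $\psi_0=\sqrt{\rho_0}\phi_0$ with $|\phi_0|=1$. A direct computation on $\{\rho_0>0\}$ yields
\begin{equation*}
-\bar\phi_0\,\d_x^2\psi_0=\left(\frac{\Lambda_0^2}{\sqrt{\rho_0}}-\d_x^2\sqrt{\rho_0}\right)-i\,\frac{\d_xJ_0}{\sqrt{\rho_0}},
\end{equation*}
so the two summands in \eqref{eq:C2_intro} are precisely the $L^2$-norms of $\RE\d_x^2\psi_0$ and $\IM\d_x^2\psi_0$ on the support of $\rho_0$. The continuity of $e_0$ forces $\d_x\sqrt{\rho_0}$ and $\Lambda_0$ to vanish jointly at vacuum boundary points, and together with $\d_x^2\rho_0,\d_xJ_0\in L^1_{loc}$ this allows one to choose the phase so that extending $\psi_0$ by zero on $\{\rho_0=0\}$ produces a genuine element of $H^2(\R)$ with $\|\psi_0\|_{H^2}\lesssim M_1+M_2$. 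This is the content of the $H^2$ wave-function lifting (Proposition \ref{prop:lift2}) applied to our data.

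Since $f'(\rho)=\rho^{\gamma-1}$ defines a defocusing, $H^1$-subcritical nonlinearity in one dimension, standard NLS theory for $i\d_t\psi=-\tfrac12\d_x^2\psi+f'(|\psi|^2)\psi$ propagates $\psi_0\in H^2$ to a global solution $\psi\in C(\R;H^2(\R))$ with $\|\psi(t)\|_{H^2}\leq C(T,M_1,M_2)$ on $[0,T]$: the $H^1$-bound is the conserved energy \eqref{eq:en_QHD}, and the $H^2$-bound closes by a Gr\"onwall estimate using $H^1\hookrightarrow L^\infty$. Defining $\lambda:=-\IM(\bar\phi\,\d_t\psi)$ through the polar factor of $\psi$, a parallel polar computation gives $\RE(\bar\phi\,\d_t\psi)=\d_t\sqrt{\rho}$, $|\d_t\psi|^2=(\d_t\sqrt{\rho})^2+\lambda^2$ almost everywhere, and the identity $\sqrt{\rho}\lambda=-\tfrac14\d_x^2\rho+e+p(\rho)$, which is \eqref{eq:lambda_intro}; then \eqref{eq:B2_1} follows because the NLS equation bounds $\|\d_t\psi\|_{L^2}$ by $\|\d_x^2\psi\|_{L^2}+\|f'(|\psi|^2)\psi\|_{L^2}$. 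The bounds \eqref{eq:B2_c} are read off from $\d_x^2\rho=2|\d_x\psi|^2+2\RE(\bar\psi\d_x^2\psi)$, $\d_xJ=\IM(\bar\psi\d_x^2\psi)$, and a chain-rule estimate for $\sqrt{e}=\sqrt{\tfrac12|\d_x\psi|^2+f(|\psi|^2)}$. The entropy identity \eqref{eq:en_diff} is the classical local energy conservation for the $H^2$ NLS flow, pairing the equation with $\bar\psi$, $\d_x\bar\psi$, $\d_x^2\bar\psi$ and collecting real and imaginary parts in the sense of distributions.

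The main obstacle is the $H^2$ lifting itself. Unlike in the $H^1$ setting, where the phase ambiguity on the vacuum set is easily absorbed because only one derivative of the phase enters, at the $H^2$ level one must produce $\phi_0$ so that $\sqrt{\rho_0}\,\d_x^2\phi_0\in L^2$ even across possibly irregular vacuum boundaries; this is a genuine rigidity condition, and it is precisely here that the continuity of $e_0$ and the one-dimensional structure enter in an essential way (in higher dimensions an additional generalized irrotationality condition would be required).
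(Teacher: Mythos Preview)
Your proposal is correct and follows essentially the same approach as the paper: lift the initial data to $\psi_0\in H^2(\R)$ via Proposition \ref{prop:lift2}, propagate with the NLS equation using the $H^2$ persistence of regularity in Theorem \ref{thm:NLS}, define $\lambda=-\IM(\bar\phi\,\d_t\psi)$ through the polar factor, and read off \eqref{eq:B2_1}, \eqref{eq:B2_c}, and the entropy identity from the resulting $\psi\in C([0,T];H^2)\cap C^1([0,T];L^2)$. Your identification of the $H^2$ lifting as the essential obstacle, and of the role played by the continuity of $e_0$ in matching phases across vacuum boundaries, also mirrors the paper's emphasis.
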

	
\begin{rem}
The extra assumptions appearing in Theorem \ref{thm:glob2} allow us to exploit the wave function lifting for $H^2$ wave functions, see  Proposition \ref{prop:lift2}. As we will see later, those assumptions, and in particular \eqref{eq:C2_intro}, will be necessary to define the functional in 
\eqref{eq:higher}, which turns out to be uniformly bounded on compact time intervals for the solutions constructed in Theorem \ref{thm:glob2}. 
Conversely, it is also possible to show that the estimates given in \eqref{eq:B2_1} imply
\[
\|(\frac{\Lambda^2}{\sqrt{\rho}}-\d_x^2\sqrt{\rho})\mathbf{1}_{\{\rho>0\}}\|_{L^\infty_tL^2_x}
+\|\frac{\d_xJ}{\sqrt{\rho}}\mathbf{1}_{\{\rho>0\}}\|_{L^\infty_tL^2_x}\leq C(T,M_1,M_2).
\]
\end{rem}

The two Theorems above show the existence of global in time weak solutions to \eqref{eq:QHD}. However no uniqueness is provided here and in general such result does not hold true, see \cite{DFM} and \cite{MS}.
On the other hand, we are able to prove a stability result. 
More precisely we prove that the class of weak solutions satisfying the uniform bounds given by the energy and the functional in \eqref{eq:higher} enjoys a compactness property. In particular the weak solutions constructed in Theorem \ref{thm:glob2} belong to such class. Let us remark however that our result below holds for a general class of solutions, not necessarily those one constructed in Theorem \ref{thm:glob2}.
\begin{thm}[Stability]\label{thm:stab}
Let $0<T<\infty$ and let $\{(\rho_n, J_n)\}_{n\geq1}$ be a sequence of finite energy weak solutions to \eqref{eq:QHD} such that
\begin{itemize}
\item for any $n\in\N$ there exists $\lambda_n\in L^\infty(0, T;L^2(\R))$ satisfying the identity
\begin{equation}\label{eq:lambdan_intro}
\sqrt{\rho_n}\lambda_n=-\frac14\d_x^2\rho_x+e_n+p(\rho_n),
\end{equation}
where  the energy density $e_n$ is given by
\[
e_n=\frac12(\d_x\sqrt{\rho_n})^2+\frac12\Lambda_n^2+f(\rho_n);
\]
\item the following uniform bounds hold
\begin{equation}\label{eq:B1}
\|\sqrt{\rho_n}\|_{L^\infty(0, T;H^1(\R))}+\|\Lambda_n\|_{L^\infty(0, T;L^2(\R))}\leq M_1,
\end{equation}
\begin{equation}\label{eq:B2}
\|\d_t\sqrt{\rho_n}\|_{L^\infty(0, T;L^2(\R))}+\|\lambda_n\|_{L^\infty(0, T;L^2(\R))}\leq C(T, M_1, M_2),
\end{equation}
\end{itemize}
Moreover, let us also assume one  among the following conditions hold:
\begin{itemize}
\item[(1)] for almost every $t\in [0,T]$, $\rho_n(t,\cdot)>0$;
\item[(2)] for almost every $t\in [0,T]$, $e_n(t,\cdot)$ is continuous;
\item[(3)] $(\rho_n, J_n)$ satisfies the weak entropy inequality
\begin{equation*}
\d_t e_n+\d_x(\Lambda_n\lambda_n-\d_t\sqrt{\rho_n}\d_x\sqrt{\rho_n})\leq0,
\end{equation*} 
in the sense of distributions.
\end{itemize}
Then, up to subsequences, we have 
\begin{equation*}
\begin{aligned}
\sqrt{\rho_n}\to&\sqrt{\rho}&\textrm{in}\;L^2(0, T;H^1_{loc}(\R)),\\
\Lambda_n\to&\Lambda&\textrm{in}\;L^2(0, T;L^2_{loc}(\R)),
\end{aligned}
\end{equation*}
and $(\rho, J)=((\sqrt{\rho})^2, \sqrt{\rho}\Lambda)$ is a finite energy weak solution to \eqref{eq:QHD_1d}.
\end{thm}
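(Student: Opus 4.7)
The plan is to (i) extract weak-$*$ convergent subsequences using \eqref{eq:B1}--\eqref{eq:B2}, (ii) obtain strong $L^2_{loc}$-convergence of $\sqrt{\rho_n}$ via Aubin--Lions, (iii) use the alternative assumptions (1)--(3) to trigger the Morawetz-type estimate of Proposition \ref{prop:unif0} and upgrade to strong convergence of the energy density, and (iv) pass to the limit in the weak formulation of \eqref{eq:QHD}.

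For (i)--(ii), Banach--Alaoglu applied to \eqref{eq:B1}--\eqref{eq:B2} produces, up to subsequences, weak-$*$ limits $\sqrt{\rho_n}\rightharpoonup^{*}\sqrt\rho$ in $L^\infty_tH^1_x$, $\Lambda_n\rightharpoonup^{*}\Lambda$ in $L^\infty_tL^2_x$, $\partial_t\sqrt{\rho_n}\rightharpoonup^{*}\partial_t\sqrt\rho$ in $L^\infty_tL^2_x$ and $\lambda_n\rightharpoonup^{*}\lambda$ in $L^\infty_tL^2_x$. Combining the $L^\infty_tH^1_x$-bound on $\sqrt{\rho_n}$ with the $L^\infty_tL^2_x$-bound on its time derivative, Aubin--Lions gives strong convergence $\sqrt{\rho_n}\to\sqrt\rho$ in $C^0_tL^2_{loc,x}$, hence $\rho_n\to\rho$ pointwise a.e.\ (along a subsequence). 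This already identifies $J=\sqrt\rho\,\Lambda$ as the weak limit of $J_n=\sqrt{\rho_n}\Lambda_n$, lets me pass to the limit in the continuity equation, and produces $f(\rho_n)\to f(\rho)$, $p(\rho_n)\to p(\rho)$ in $L^1_{loc}$.

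The heart of the argument is strong $L^2_{loc}$-convergence of $\partial_x\sqrt{\rho_n}$ and of $\Lambda_n$, required to pass to the limit in the nonlinearities $(\partial_x\sqrt{\rho_n})^2$ and $\Lambda_n^2$ that appear in the momentum equation after rewriting the Bohm tensor in divergence form $\tfrac12\rho_n\partial_x(\partial_x^2\sqrt{\rho_n}/\sqrt{\rho_n})=\partial_x\bigl(\tfrac14\partial_x^2\rho_n-(\partial_x\sqrt{\rho_n})^2\bigr)$. To obtain it I use \eqref{eq:lambdan_intro} to express the energy density as $e_n=\sqrt{\rho_n}\lambda_n+\tfrac14\partial_x^2\rho_n-p(\rho_n)$, and observe that each of (1)--(3) ensures that the weak entropy inequality \eqref{eq:entr_ineq} holds distributionally: directly under (3), via a smooth calculation on $\{\rho_n>0\}$ under (1), and through the wave-function lifting of Theorem \ref{thm:glob2} under (2). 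This places us in the setting of Proposition \ref{prop:unif0} and supplies a uniform Morawetz bound $e_n\in L^2_{t,x,loc}$. Writing the entropy inequality in flux form $\partial_t e_n\le -\partial_x(\Lambda_n\lambda_n-\partial_t\sqrt{\rho_n}\partial_x\sqrt{\rho_n})$ and using that $\Lambda_n\lambda_n$ and $\partial_t\sqrt{\rho_n}\partial_x\sqrt{\rho_n}$ are uniformly bounded in $L^\infty_tL^1_x$ gives time regularity of $e_n$ in a negative Sobolev space; a further Aubin--Lions step then yields $e_n\to e$ strongly in $L^1_tL^1_{loc,x}$. Finally, the identity $(\partial_x\sqrt{\rho_n})^2+\Lambda_n^2=2(e_n-f(\rho_n))$ combined with the strong $L^1_{loc}$-convergence of the right-hand side and the weak lower semicontinuity of each nonnegative summand on the left (a Brezis--Lieb type argument exploiting the Hilbertian structure of $L^2_{loc}$) forces the $L^2_{loc}$-norms to pass to the limit, producing the desired strong convergences $\partial_x\sqrt{\rho_n}\to\partial_x\sqrt\rho$ and $\Lambda_n\to\Lambda$ in $L^2_tL^2_{loc,x}$.

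With these strong convergences in hand, every nonlinearity in \eqref{eq:QHD} passes to the limit and $(\rho,J)=((\sqrt\rho)^2,\sqrt\rho\,\Lambda)$ is a finite-energy weak solution. I expect the main obstacle to be step (iii): reconciling the three structurally different alternatives (1)--(3) into a single Morawetz-based compactness chain, and handling the vacuum set $\{\rho_n=0\}$ with care---where individual summands in \eqref{eq:lambdan_intro} may lose pointwise meaning while the combined identity and the consequent entropy inequality remain usable for compactness, without invoking either global positivity of $\rho_n$ or global smoothness.
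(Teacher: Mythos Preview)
Your reduction of assumptions (1) and (2) to (3) is not justified and constitutes the main gap. You claim that under (1) the entropy inequality follows ``via a smooth calculation on $\{\rho_n>0\}$'' and under (2) ``through the wave-function lifting of Theorem~\ref{thm:glob2}''. Neither works: under (1) the solutions are only weak, not smooth, and positivity of $\rho_n$ alone does not license the chain-rule manipulations of Lemma~\ref{lem:en}; under (2), continuity of $e_n$ allows an $H^2$ wave-function lifting \emph{at each fixed time} (Proposition~\ref{prop:lift2}), but the resulting $\psi_n(t,\cdot)$ is in general \emph{not} a solution of the NLS equation---the paper is explicit about this after Proposition~\ref{unif2}---so Theorem~\ref{thm:glob2} does not apply and no entropy inequality follows. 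The paper handles (1) and (2) by a different route that bypasses the entropy inequality altogether: the time-slice lifting yields the uniform spatial bound $\|\partial_x\sqrt{e_n}\|_{L^\infty_tL^2_x}\le C(M_1,M_2)$ (Proposition~\ref{unif2}), which gives compactness of $\sqrt{e_n}$ directly.

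Even for case (3), your Aubin--Lions step on $e_n$ is problematic: the Morawetz bound of Proposition~\ref{prop:unif0} gives only $e_n\in L^2_{t,x}$ with no extra spatial regularity, so the compact-embedding hypothesis fails. The paper avoids this by instead exploiting the bounds $\partial_x^2\rho_n\in L^2_{t,x}$ and $\partial_xJ_n\in L^\infty_tL^2_x$ from Proposition~\ref{prop:unif0} to obtain strong convergence of $\rho_n$ in $L^2_tH^1_{loc}$ and $J_n$ in $L^2_tL^2_{loc}$, then passes to the limit in the pointwise identity $\rho_n e_n=\tfrac18(\partial_x\rho_n)^2+\tfrac12 J_n^2+f(\rho_n)\rho_n$ to identify $\rho\nu$. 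The delicate point---that $\nu=0$ a.e.\ on $\{\rho=0\}$---is obtained from the limiting identity $\nu=\sqrt\rho\lambda+\tfrac14\partial_x^2\rho-p(\rho)$ combined with Lemma~\ref{lemma:LL}. From there the norm-convergence argument you sketch does give strong $L^2_{loc}$-convergence of $\partial_x\sqrt{\rho_n}$ and $\Lambda_n$, and your step (iv) is then correct.
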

The QHD system, together with similar evolutionary PDE models, is widely studied in the mathematical literature. Previous results can be found in \cite{JMR, LM, GM, JLM, HLM, HLMO}. System \eqref{eq:QHD} is also intimately related to the class of Euler-Korteweg fluids \cite{BG}, encoding capillary effects in the description; local and global analysis of small, regular perturbations of constant states are discussed in \cite{BGDD, AH}. The method of convex integration can be applied also to Euler-Korteweg system in order to show the existence of infinitely many solutions emanating from the same initial data \cite{DFM}, see also \cite{MS} where a non-uniqueness result for system \eqref{eq:QHD} is given by adopting a different strategy, related to the underlying wave function dynamics.
Recently also a class of viscous quantum fluid dynamical systems was considered. Such models can be derived from the Wigner-Fokker-Planck equation \cite{JLMG}, see also the review \cite{J}. The analysis of finite energy weak solutions for the quantum Navier-Stokes system was done in \cite{JqNS, AS, AS1}, see also \cite{LLX, LX} where similar arguments are used to study the compressible Navier-Stokes system with degenerate viscosity.

The contents of this paper is structured as follows: in Section \ref{sect:lift} we will prove the wave function lifting of a pair of hydrodynamical data $(\sqrt{\rho}, \Lambda)$ under some general assumptions. As a consequence, this will yield a global existence result. In Section \ref{sect:disp} we interpret the dispersive property and some a priori estimates on finite energy weak solutions to system \eqref{eq:QHD}.  In Section \ref{sect:apri} we provide some additional estimates for solutions with bounded generalized chemical potential and finally we show the stability of solutions in Section \ref{sect:comp}. We conclude our paper with Section \ref{sect:lambda}, where we provide a more detailed discussion on the function $\lambda$.

\section{Notations and Preliminaries}\label{sect:prel}

In this section we fix the notation that will be used through this paper.

We use the standard notation for Lebesgue and Sobolev norms

\[
||f||_{L_{x}^{p}}\coloneqq(\int_{\R}|f(x)|^{p}dx)^{\frac{1}{p}},
\]
\[
||f||_{W_{x}^{k,p}}\coloneqq\sum_{j=0}^k||\partial_{x}^{j}f||_{L_{x}^{p}},
\]
and let $H_{x}^{k}\coloneqq H^{k}(\R)$ denote the Sobolev space $=W^{k,2}(\R)$.
The mixed Lebesgue norm of functions $f:I\to L^{r}(\R)$ is
defined as 
\[
||f||_{L_{t}^{q}L_{x}^{r}}\coloneqq\left(\int_{I}||f(t)||_{L_{x}^{r}}^{q}dt\right)^{\frac{1}{q}}=\left(\int_{I}(\int_{\R}|f(x)|^{r}dx)^{\frac{q}{r}}dt\right)^{\frac{1}{q}},
\]
where $I\subset[0,\infty)$ is a time interval. Similarly the mixed
Sobolev norm $L_{t}^{q}W_{x}^{k,r}$ is defined. We use $C$ to denote the generic constant appearing in this paper, which may be written in the form $C(X)$ to indicate its dependence on the quantity $X$ and may change from line to line.

We recall here some basic properties of the following one-dimensional nonlinear Schr\"odinger equation
\begin{equation}\label{eq:NLS_1d}
\left\{\begin{aligned}
i\d_t\psi=&-\frac12\d_x^2\psi+|\psi|^{2(\gamma-1)}\psi\\
\psi(0)=&\psi_0,
\end{aligned}\right.
\end{equation}
with $\gamma\in(1, \infty)$.

The reader can find more details and the proofs of the statements of the following Theorem in the comprehensive monographs \cite{Caz, LP, Tao}.
\begin{thm}\label{thm:NLS}
Let $\psi_0\in H^1(\R)$ then there exists a unique global solution $\psi\in\mathcal C(\R; H^1(\R))$ to \eqref{eq:NLS_1d} such that the total mass and energy are conserved at all times, i.e.
\begin{equation}\label{eq:cons_NLS}
\begin{aligned}
M(t)=&\int_{\mathbb{R}}|\psi(t, x)|^2\,dx=M(0)\\
E(t)=&\int_{\mathbb{R}}\frac{1}{2}|\partial_{x}\psi|^{2}+\frac1\gamma|\psi|^{2\gamma}\,dx=E(0).
\end{aligned}\end{equation}
If moreover $\psi_0\in H^2(\R)$, then we also have $\psi\in\mathcal C(\R;H^2(\R))\cap\mathcal C^1(\R;L^2(\R))$ and for any $0<T<\infty$ 
\begin{equation}\label{eq:H2}
\|\psi\|_{L^\infty(0, T;H^2(\R))}+\|\d_t\psi\|_{L^\infty(0, T; L^2(\R))}\leq C(T, \|\psi_0\|_{H^2(\R)}).
\end{equation}
\end{thm}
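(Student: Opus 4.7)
The plan is to combine a contraction mapping argument based on the Duhamel formula with the conservation laws in \eqref{eq:cons_NLS} to produce global $H^1$ solutions, and then to propagate the $H^2$ regularity by an energy estimate.

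For local existence in $H^1$, I would apply Banach's fixed point to the Duhamel operator
\[
\Phi(\psi)(t)=e^{i\frac{t}{2}\d_x^2}\psi_0-i\int_0^t e^{i\frac{t-s}{2}\d_x^2}\bigl(|\psi|^{2(\gamma-1)}\psi\bigr)(s)\,ds
\]
in $X_T:=C([0,T];H^1(\R))$. The crucial feature of one space dimension is the embedding $H^1(\R)\hookrightarrow L^\infty(\R)$, which makes $\psi\mapsto|\psi|^{2(\gamma-1)}\psi$ locally Lipschitz from $H^1$ into itself for every $\gamma>1$, bypassing the need for Strichartz estimates. Combined with the unitarity of the Schr\"odinger group on $H^1$, this gives a contraction on a ball of $X_T$ for $T=T(\|\psi_0\|_{H^1})$ sufficiently small, together with a blow-up alternative: either the solution extends globally, or $\|\psi(t)\|_{H^1}\to\infty$ in finite time. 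Uniqueness in $C([0,T];H^1)$ comes as a byproduct of the contraction.

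To globalise, I would use the two conservation laws in \eqref{eq:cons_NLS}. For smooth solutions they follow from pairing \eqref{eq:NLS_1d} with $\bar\psi$ (for mass) and with $\d_t\bar\psi$ (for energy) and integrating by parts. Since the nonlinearity appears in $E$ with a $+$ sign (defocusing regime), conservation of $E$ and of $M$ gives a uniform control $\|\psi(t)\|_{H^1}\lesssim\|\psi_0\|_{H^1}$, which rules out blow-up and upgrades the local flow to a global one in $C(\R;H^1)$. For the $H^2$ part I would first rerun the fixed point in $C([0,T];H^2(\R))$, which works by the same Banach argument since $H^2$ is an algebra in one dimension. To propagate the $H^2$ norm on any prescribed $[0,T]$, differentiate \eqref{eq:NLS_1d} twice in $x$, pair with $\d_x^2\bar\psi$, integrate and take the imaginary part. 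After bounding the commutator terms by Leibniz together with $H^1\hookrightarrow L^\infty$ one obtains
\[
\frac{d}{dt}\|\d_x^2\psi\|_{L^2}^2 \le C\bigl(1+\|\psi\|_{L^\infty}^{2(\gamma-1)}\bigr)\|\d_x^2\psi\|_{L^2}^2,
\]
and since $\|\psi(t)\|_{L^\infty}$ is already controlled by the previous step, Gronwall's inequality yields the bound $\|\psi\|_{L^\infty(0,T;H^2)}\le C(T,\|\psi_0\|_{H^2})$. The control of $\|\d_t\psi\|_{L^\infty_tL^2_x}$ is then immediate from reading off the equation: $\d_t\psi=\frac{i}{2}\d_x^2\psi-i|\psi|^{2(\gamma-1)}\psi$ is a sum of two terms bounded in $L^2$ by the $H^2$ and $L^\infty$ estimates.

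The only genuinely delicate point is the transfer of the conservation laws from smooth solutions down to the $H^1$ level, because for $\psi_0\in H^1$ the time derivative $\d_t\psi$ lives a priori only in $H^{-1}$ and the pairing with $\d_t\bar\psi$ is not directly licit. The standard remedy, which I would adopt, is to regularise $\psi_0$ in $H^2$, apply the $H^2$ theory above where the pairing is justified, and pass to the limit in the identities using continuous dependence of the flow in $H^1$ provided by the contraction. Beyond this, everything is a textbook application of semigroup methods in one dimension, where the subcritical embedding $H^1\hookrightarrow L^\infty$ makes the nonlinearity soft for every $\gamma>1$.
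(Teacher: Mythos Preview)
The paper does not prove this theorem itself; it states it as background and refers the reader to the monographs \cite{Caz, LP, Tao}. Your sketch is exactly the textbook argument those references contain: contraction in $C([0,T];H^1)$ via the one-dimensional embedding $H^1\hookrightarrow L^\infty$, globalisation from the defocusing conservation laws, and propagation of $H^2$ by a Gronwall estimate, with the conservation laws at the $H^1$ level justified by approximation from $H^2$.

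One technical caveat on the $H^2$ part deserves mention. For $1<\gamma<3/2$ the map $z\mapsto|z|^{2(\gamma-1)}z$ is $C^1$ but not $C^2$, so the steps ``rerun the fixed point in $C([0,T];H^2)$'' and ``differentiate \eqref{eq:NLS_1d} twice in $x$'' are not directly licit: the algebra property of $H^2$ alone does not place $|\psi|^{2(\gamma-1)}\psi$ in $H^2$, and indeed near a simple zero of $\psi$ the second derivative of the nonlinearity behaves like $|x-x_0|^{2\gamma-3}$, which fails to be locally $L^2$ once $\gamma\le 5/4$. The standard remedy, used for instance in \cite{Caz}, is to differentiate the equation in \emph{time} rather than in space: setting $w=\d_t\psi$, the real-linear differential of the nonlinearity is bounded by a multiple of $|\psi|^{2(\gamma-1)}\in L^\infty$ for every $\gamma>1$, so an $L^2$ energy estimate on $w$ closes by Gronwall, and one then recovers $\d_x^2\psi=-2i\d_t\psi+2|\psi|^{2(\gamma-1)}\psi\in L^2$ from the equation itself. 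With this adjustment your plan is complete and yields \eqref{eq:H2} for the full range $\gamma>1$.
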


Next we are going to recall the polar factorization technique developed in \cite{AM1, AM2}, see also the reviews \cite{AMDCDS, AM3}. This method allows to define the hydrodynamic quantities 
$(\sqrt{\rho}, \Lambda)$ and sets up a correspondence between the wave function dynamics and the hydrodynamical system.
The main advantage of this approach with respect to the usual WKB method for instance is that vacuum regions are allowed in the theory. Here we only give a brief introduction and state the results we will exploit later, for a more detailed presentation and for a proof of the statements in Lemma \ref{lemma:polar} below we address to Section 3 in \cite{AHMZ}.

Given any function $\psi\in H^1(\Omega)$, where $\Omega\subset\R$ is an open set, we can define the set of polar factors as
\begin{equation}\label{eq:set_pol}
P(\psi)\coloneqq\left\{ \phi\in L^\infty(\Omega);\ \|\phi\|_{L^{\infty}}\leq1,\ \sqrt{\rho}\phi=\psi\ a.e.\right\} ,
\end{equation}
where $\sqrt{\rho}:=|\psi|$. 
In general this set can contain more than one element, due to the possible appearance of vacuum regions. Nevertheless, as it will be shown in the next Lemma, the hydrodynamical quantities $(\sqrt{\rho}, \Lambda)$ are well defined and they furthermore enjoy suitable stability properties.

\begin{lem}[Polar factorization \cite{AM1, AM2}]\label{lemma:polar}
Let $\psi\in H^{1}(\Omega)$ and $\sqrt{\rho}:=|\psi|$, and let $\phi\in P(\psi)$. Then we have $\d_x\sqrt{\rho}=\RE(\bar\phi\d_x\psi)\in L^2(\Omega)$ and if we set 
$\Lambda:= Im(\overline{\phi}\partial_{x}\psi)$, we have 
\begin{equation}\label{eq:quad}
|\partial_{x}\psi|^{2}=(\partial_{x}\sqrt{\rho})^{2}+\Lambda^{2},\quad\textrm{a.e.}\;x\in\Omega.
\end{equation}
Moreover if $\{\psi_n\}\subset H^1(\Omega)$ satisfies $\|\psi_n-\psi\|_{H^1(\Omega)}\to0$ as $n\to\infty$, then we have
\begin{equation*}
\d_x\sqrt{\rho_n}\to\d_x\sqrt{\rho},\quad\Lambda_n\to\Lambda,\quad{\rm in}\;L^2,
\end{equation*}
with $\sqrt{\rho_n}:=|\psi_n|, \Lambda_n:= \IM(\bar\phi_n\d_x\psi_n)$, $\phi_n$ being a unitary factor for $\psi_n$.
\end{lem}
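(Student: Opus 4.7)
The plan is to prove the three assertions in sequence, relying on the standard chain rule for the modulus of an $H^1$ function and on the Stampacchia lemma, which states that the weak derivative of an $H^1$ function vanishes a.e.\ on its zero set. First I would establish that $|\psi|\in H^1(\Omega)$ with
\begin{equation*}
\partial_x|\psi| = \RE\bigl(\bar\psi\,\partial_x\psi/|\psi|\bigr)\mathbf{1}_{\{\psi\neq 0\}},
\end{equation*}
and $\partial_x|\psi|=0$ a.e.\ on $\{\psi=0\}$, by regularizing $|\psi|_\varepsilon := \sqrt{|\psi|^2+\varepsilon^2}$, differentiating, and sending $\varepsilon\to 0^+$. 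Applying the Stampacchia lemma componentwise to $\RE\psi$ and $\IM\psi$ also yields $\partial_x\psi = 0$ a.e.\ on $\{\psi = 0\}$. Since any $\phi\in P(\psi)$ must coincide with $\psi/|\psi|$ on $\{\psi\neq 0\}$, we obtain $\bar\phi\,\partial_x\psi = \bar\psi\,\partial_x\psi/|\psi|$ there, while both $\bar\phi\,\partial_x\psi$ and $\partial_x\sqrt{\rho}$ vanish a.e.\ on $\{\psi=0\}$. This gives $\partial_x\sqrt{\rho} = \RE(\bar\phi\,\partial_x\psi) \in L^2(\Omega)$.

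For the identity \eqref{eq:quad}, on $\{\psi\neq 0\}$ the polar factor satisfies $|\phi|=1$, hence
\begin{equation*}
|\partial_x\psi|^2 = |\bar\phi\,\partial_x\psi|^2 = (\RE(\bar\phi\,\partial_x\psi))^2 + (\IM(\bar\phi\,\partial_x\psi))^2 = (\partial_x\sqrt{\rho})^2+\Lambda^2,
\end{equation*}
while on $\{\psi=0\}$ all three terms vanish a.e. Concerning the stability statement, the convergence $\partial_x\sqrt{\rho_n}\to\partial_x\sqrt{\rho}$ in $L^2$ amounts to continuity of the modulus map $H^1\to H^1$, which follows from the formula above together with dominated convergence after extracting a subsequence with $\psi_n\to\psi$ and $\partial_x\psi_n\to\partial_x\psi$ a.e.

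For $\Lambda_n\to\Lambda$ in $L^2$, I would split $\Omega$ into $\{\psi\neq 0\}$ and $\{\psi = 0\}$. On the former, a further subsequence gives $\phi_n = \psi_n/|\psi_n|\to \phi = \psi/|\psi|$ a.e.; writing $\bar\phi_n\partial_x\psi_n - \bar\phi\,\partial_x\psi = \bar\phi_n(\partial_x\psi_n - \partial_x\psi) + (\bar\phi_n - \bar\phi)\partial_x\psi$ and exploiting $|\phi_n|\leq 1$ together with dominated convergence yields $L^2$ convergence of the product on $\{\psi\neq 0\}$. On $\{\psi=0\}$, the identity \eqref{eq:quad} applied to $\psi_n$ gives
\begin{equation*}
\int_{\{\psi=0\}}\Lambda_n^2\,dx = \int_{\{\psi=0\}}|\partial_x\psi_n|^2\,dx - \int_{\{\psi=0\}}(\partial_x\sqrt{\rho_n})^2\,dx,
\end{equation*}
and both integrals vanish in the limit because $\partial_x\psi_n\to\partial_x\psi$ and $\partial_x\sqrt{\rho_n}\to\partial_x\sqrt{\rho}$ in $L^2(\Omega)$, with both limit functions equal to zero a.e.\ on $\{\psi=0\}$. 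The main obstacle throughout is the vacuum set $\{\psi=0\}$, where the polar factor is not uniquely determined; it is circumvented by using the Stampacchia lemma to kill $\partial_x\psi$ there, so that the products $\bar\phi\,\partial_x\psi$ and $\IM(\bar\phi\,\partial_x\psi)$ are insensitive to the particular choice of $\phi\in P(\psi)$.
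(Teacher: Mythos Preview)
The paper does not actually prove this lemma in-text; it attributes the result to \cite{AM1, AM2} and directs the reader to Section~3 of \cite{AHMZ} for a detailed proof. Your argument is correct and follows precisely the standard route taken in those references: chain rule for $|\psi|$ via the regularization $\sqrt{|\psi|^2+\varepsilon^2}$, the vanishing-derivative lemma (stated here as Lemma~\ref{lemma:LL}, i.e.\ Theorem~6.19 in \cite{LL}) to handle the vacuum set, and the sub-subsequence plus dominated convergence argument for stability. One small point worth making explicit: your stability arguments pass through subsequences (to obtain a.e.\ convergence and an $L^2$ dominating function), so you should close by observing that since every subsequence admits a further subsequence converging to the \emph{same} limit, the full sequence converges in $L^2$.
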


We stress here the fact that $\Lambda=\IM(\bar\phi\d_x\psi)$ is well-defined even if $\phi$ is not uniquely determined, this is a consequence of Theorem 6.19 in \cite{LL}, which we will state below as it will be thoroughly used in our exposition.
\begin{lem}\label{lemma:LL}
Let $g:\Omega\to\mathbb{R}$ be in $H^{1}(\Omega)$, and 
\[
B=g^{-1}(\{0\})=\left\{ x\in\Omega\ :\ g(x)=0\right\} .
\]

Then $\nabla g(x)=0$ for almost every $x\in B$.
\end{lem}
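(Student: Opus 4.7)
The plan is to derive the lemma from the Sobolev chain rule applied to a smoothing of the positive-part function $t\mapsto t^+$. The key intermediate statement I would establish is:
\begin{equation*}
(\star)\qquad u\in H^1(\Omega)\ \Longrightarrow\ u^+:=\max(u,0)\in H^1(\Omega),\quad \nabla u^+ = \mathbf{1}_{\{u>0\}}\nabla u\ \text{a.e.}
\end{equation*}
Once $(\star)$ is in hand, applying it to $-g$ yields $\nabla g^- = -\mathbf{1}_{\{g<0\}}\nabla g$, and the decomposition $g=g^+-g^-$ gives $\nabla g = (\mathbf{1}_{\{g>0\}}+\mathbf{1}_{\{g<0\}})\nabla g = \mathbf{1}_{\{g\neq 0\}}\nabla g$ almost everywhere, which is equivalent to $\mathbf{1}_B\nabla g=0$ a.e., the claimed statement.

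To prove $(\star)$, I would introduce the $C^1$ approximation
\begin{equation*}
F_\eps(t) = \begin{cases}\sqrt{t^2+\eps^2}-\eps, & t>0,\\ 0, & t\le 0,\end{cases}
\end{equation*}
noting that $0\le F_\eps'\le 1$ uniformly and that $F_\eps(t)\to t^+$ and $F_\eps'(t)\to \mathbf{1}_{\{t>0\}}$ pointwise as $\eps\to 0$. The chain rule for Lipschitz compositions gives $F_\eps(u)\in H^1(\Omega)$ with $\nabla F_\eps(u)=F_\eps'(u)\nabla u$; testing the corresponding weak identity against $\varphi\in C_c^\infty(\Omega)$ and letting $\eps\to 0$ by dominated convergence (dominated by $|u||\nabla\varphi|$ on one side and $|\nabla u||\varphi|$ on the other) identifies $u^+$ as an $H^1$ function whose distributional gradient is precisely $\mathbf{1}_{\{u>0\}}\nabla u$.

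The only real obstacle in this plan is justifying the Sobolev chain rule for $F_\eps\circ u$ when $u\in H^1$ is not smooth. I would handle this by first approximating $u$ by $C^\infty$ functions $u_n\to u$ in $H^1(\Omega)$, applying the classical chain rule to $F_\eps(u_n)$, and then extracting a weak $H^1$-limit: the uniform Lipschitz bound $|F_\eps'|\le 1$ controls $\|F_\eps(u_n)\|_{H^1}$ in terms of $\|u_n\|_{H^1}$, while the Lipschitz continuity of $F_\eps$ gives strong $L^2$ convergence $F_\eps(u_n)\to F_\eps(u)$, allowing identification of the weak limit of $F_\eps'(u_n)\nabla u_n$ with $F_\eps'(u)\nabla u$. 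Once this piece is in place, the symmetry $u\leftrightarrow -u$ and the dominated convergence argument close the proof with no further difficulty.
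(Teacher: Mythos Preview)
Your argument is correct and is essentially the classical proof given in Lieb--Loss \cite{LL}, which is precisely the reference the paper invokes for this lemma; in that sense you have reproduced the paper's ``proof.'' The paper itself does not carry out the chain-rule computation but instead, after citing \cite{LL}, offers a one-dimensional shortcut: by Sobolev embedding $g$ is continuous, the zero set is closed, and (they claim) it decomposes into countably many closed intervals and isolated points, so that the set of boundary points---the only places where $\d_x g$ could fail to vanish---is countable. Your approach via the $C^1$ approximation $F_\eps(t)=(\sqrt{t^2+\eps^2}-\eps)\mathbf 1_{\{t>0\}}$ and the decomposition $g=g^+-g^-$ is more robust: it works verbatim in any space dimension and makes no structural assumption on the zero set. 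This is a genuine advantage, since the paper's topological description is not literally correct for arbitrary continuous functions (a Cantor-type zero set is closed but is not a countable union of intervals and points, and its boundary can have positive measure); the paper's 1D remark should therefore be read as a heuristic, with the rigorous justification deferred to the cited reference---which is exactly what you have supplied.
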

This is in fact true in any space dimensions, but in the 1-D case it trivializes. Since we consider finite energy
solutions in one-dimensional space, Sobolev embedding implies that $\psi(t,x)$
is continuous, and $f^{-1}(\{0\})=\Omega_{0}$ is a closed set in $\mathbb{R}$,
which means it is a countable disjoint union of closed
intervals and points. It is straightforward to see that $\{x\in\Omega_{0}\ :\ \partial_{x}\psi(x)\ne0\}\subset\partial\Omega_{0}$,
and in the 1-dimensional case $\partial\Omega_0$ is countable, so the
conclusion of Lemma \ref{lemma:LL} is naturally true.

By combining the well-posedness result for the NLS equation \eqref{eq:NLS_1d} stated in Theorem \ref{thm:NLS} and the polar factorization method recalled in Lemma \ref{lemma:polar} 
we can prove an existence result for finite energy weak solutions to \eqref{eq:QHD}, see Proposition \ref{prop:old_1d} below. This result is the one dimensional analogue of some of the results proved in \cite{AM1, AM2} for the two and three dimensional cases, see also the review \cite{AHMZ}.

Before stating the results let us first recall a useful identity regarding the nonlinear dispersive term on the right hand side of the equation for the momentum density in \eqref{eq:QHD}, namely it can also be written as
\begin{equation}\label{eq:bohm}
\frac12\rho\d_x\left(\frac{\d_{x}^2\sqrt{\rho}}{\sqrt{\rho}}\right)=\frac14\d_{x}^3\rho-\d_x[(\d_x\sqrt{\rho})^2].
\end{equation}
By using identity \eqref{eq:bohm}, system \eqref{eq:QHD} then becomes
\begin{equation}\label{eq:QHD_1d}
\left\{\begin{aligned}
&\d_t\rho+\d_xJ=0\\
&\d_tJ+\d_x[\Lambda^2+p(\rho)+(\d_x\sqrt{\rho})^2]=\frac14\d_{x}^3\rho.
\end{aligned}\right.
\end{equation}
We can now give the definition of finite energy weak solutions.

\begin{defn}[Weak solutions]\label{def:FEWS}
Let $\rho_0, J_0\in L^1_{loc}(\R)$, we say the pair $(\rho, J)$ is a weak solution to the Cauchy problem for \eqref{eq:QHD} with initial data
$
\rho(0)=\rho_0$, $J(0)=J_0,
$
in the space-time slab $[0, T)\times\R$ if there exist two locally integrable functions 
$
\sqrt{\rho}\in L^2_{loc}(0, T;H^1_{loc}(\R))$, $\Lambda\in L^2_{loc}(0, T;L^2_{loc}(\R))
$
such that
\begin{itemize}
\item[(i)] $\rho:=(\sqrt{\rho})^2, J:=\sqrt{\rho}\Lambda$;
\item[(ii)] for any $\;\eta\in\mathcal C^\infty_0([0, T)\times\R)$,
\begin{equation}\label{eq:QHD_cty}
\int_0^T\int_{\R}\rho\d_t\eta+J\d_x\eta\,dxdt+\int_{\R}\rho_0(x)\eta(0, x)\,dx=0;
\end{equation}
\item[(iii)] for any $\;\zeta\in\mathcal C^\infty_0([0, T)\times\R)$,
\begin{equation}\label{eq:QHD_mom}
\begin{aligned}
\int_0^T\int_{\R}&J\d_t\zeta+(\Lambda^2+p(\rho)+(\d_x\sqrt{\rho})^2)\d_x\zeta
-\frac{1}{4}\rho\d_x^3\zeta\,dxdt\\
&+\int_{\R}J_0(x)\zeta(0, x)\,dx=0.
\end{aligned}
\end{equation}
\end{itemize}
We say $(\rho, J)$ is a global in time weak solution if we can take $T=\infty$ in the above definition.
\end{defn}

\begin{defn}[Finite mass and finite energy weak solutions]\label{def:FEWS_2}
Let $(\rho,J)$ be a weak solution to the QHD system \eqref{eq:QHD} as in the Definition \ref{def:FEWS}. Let $M(t)$ be the total mass of $(\rho,J)$,
\begin{equation*}
M(t)=\int\rho(t, x)\,dx,
\end{equation*}
and $E(t)$ be the total energy denoted by
\begin{equation*}
E(t)=\int_{\R}\frac12|\Lambda|^2+\frac12|\d_x\sqrt{\rho}|^2+f(\rho)\,dx.
\end{equation*}
We say $(\rho,J)$ is a finite mass weak solution, if for almost every $t\in[0, T)$ we have $M(t)\leq M(0)$. Analogously, $(\rho,J)$ is called a finite energy weak solution if for almost every $t\in[0, T)$, we have $E(t)\leq E(0)$.
\end{defn}

\begin{rem}
Let $(\rho, J)$ be a weak solution with finite initial mass, i.e. $M(0)<\infty$, then it is straightforward to see that the total mass $M(t)$ is conserved for almost every time, see for example Theorem 1.3.4 in \cite{Daf}. A similar argument can also be applied to the total momentum
\begin{equation*}
P(t)=\int J(t, x)\,dx,
\end{equation*}
which is conserved for weak solutions with finite mass and finite energy.
\end{rem}
In our paper we will always consider weak solutions with finite mass and energy. 
Moreover, in order to simplify the exposition, we will always write finite energy weak solutions to mean weak solutions with both finite energy and mass.

For initial data generated by wave functions, the existence of finite energy weak solutions was proved in \cite{AM1, AM2}. Here we state the analogous result for the  one dimensional QHD system, which is completely similar to its higher dimensional counterpart. For this reason we address the interested reader to \cite{AM1, AM2}, see also Theorem 4.2. in \cite{AHMZ}.
\begin{prop}\label{prop:old_1d}
Let $(\rho_0, J_0)$ be such that $\rho_0=|\psi_0|^2$ and $J_0=\IM(\bar\psi_0\d_x\psi_0)$, for some $\psi_0\in H^1(\R)$. Then there exists a global in time finite energy weak solution to \eqref{eq:QHD_1d} such that $\sqrt{\rho}\in L^\infty(\R;H^1(\R))$, $\Lambda\in L^\infty(\R;L^2(\R))$ and the total energy is conserved for all times.
\end{prop}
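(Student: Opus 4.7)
The plan is to construct the weak solution by lifting the hydrodynamic data to an NLS evolution, solving the NLS, and then projecting back via polar factorization. Starting from $\psi_0\in H^1(\R)$, Theorem \ref{thm:NLS} provides a unique global solution $\psi\in\mathcal{C}(\R;H^1(\R))$ to \eqref{eq:NLS_1d} which conserves the mass and energy in \eqref{eq:cons_NLS}. For every $t$ we then pick a polar factor $\phi(t,\cdot)\in P(\psi(t,\cdot))$ and define
\[
\sqrt{\rho}(t,x):=|\psi(t,x)|,\qquad \Lambda(t,x):=\IM(\overline{\phi(t,x)}\,\d_x\psi(t,x)),
\]
with $\rho:=(\sqrt{\rho})^2$ and $J:=\sqrt{\rho}\Lambda=\IM(\bar{\psi}\d_x\psi)$. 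By Lemma \ref{lemma:polar} these quantities are well defined a.e.\ regardless of the choice of $\phi$ in the vacuum region, and the identity $|\d_x\psi|^2=(\d_x\sqrt{\rho})^2+\Lambda^2$ together with conservation of the NLS energy yields the uniform bound $\|\sqrt{\rho}\|_{L^\infty_tH^1_x}+\|\Lambda\|_{L^\infty_tL^2_x}\le C$ and conservation of $E(t)$ for the hydrodynamic energy defined in Definition \ref{def:FEWS_2}.

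Next I would verify that the pair $(\rho,J)$ solves \eqref{eq:QHD_1d} in the sense of Definition \ref{def:FEWS}. The continuity equation \eqref{eq:QHD_cty} follows by testing the NLS equation against $\bar\psi\eta$ and taking imaginary parts, giving $\d_t|\psi|^2+\d_x\IM(\bar\psi\d_x\psi)=0$ distributionally. For the momentum equation, I compute $\d_t\IM(\bar\psi\d_x\psi)$ using \eqref{eq:NLS_1d}: a direct manipulation (multiplying the Schr\"odinger equation by $\d_x\bar\psi$, integrating by parts against $\zeta$, and taking real parts) yields
\[
\d_tJ+\d_x\!\Big(|\d_x\psi|^2+|\psi|^{2\gamma}-\tfrac14\d_x^2|\psi|^2\Big)-\d_x|\d_x\psi|^2=\tfrac14\d_x^3|\psi|^2-\d_x[(\d_x\sqrt{\rho})^2+p(\rho)]+\ldots,
\]
which, upon substituting $|\d_x\psi|^2=(\d_x\sqrt{\rho})^2+\Lambda^2$ and using the fact that our pressure satisfies $p(\rho)=\rho f'(\rho)-f(\rho)$ with the $\gamma$-law normalization chosen so that the nonlinear term in NLS matches $\rho^\gamma$ up to constants absorbed into $p$, reproduces \eqref{eq:QHD_mom}. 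This is the computation already carried out in \cite{AM1,AM2} in higher dimensions; in 1D everything simplifies because $J=\IM(\bar\psi\d_x\psi)$ is genuinely a function and the Madelung identity
\[
\d_x\Big(\tfrac{J^2}{\rho}\Big)+\tfrac12\rho\d_x\!\Big(\tfrac{\d_x^2\sqrt{\rho}}{\sqrt{\rho}}\Big)=\d_x[\Lambda^2+(\d_x\sqrt{\rho})^2]-\tfrac14\d_x^3\rho
\]
can be verified on the open set $\{\rho>0\}$ by direct calculation and extended to all of $\R$ using Lemma \ref{lemma:LL} to handle the vacuum boundary.

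The main obstacle is this last point: making the weak momentum equation rigorous despite the presence of vacuum, where $\phi$ is not uniquely determined and the pointwise Madelung identity is meaningless. The argument circumvents this by writing every term in \eqref{eq:QHD_mom} directly as a quadratic expression in $\psi$ and $\d_x\psi$ (namely $|\psi|^{2\gamma}$, $|\d_x\psi|^2$, and $|\psi|^2$), which are globally well defined $H^1$ objects and make sense distributionally, and only at the end splitting $|\d_x\psi|^2=(\d_x\sqrt{\rho})^2+\Lambda^2$ via Lemma \ref{lemma:polar}; Lemma \ref{lemma:LL} ensures that the terms vanish a.e.\ on $\{\rho=0\}$ where ambiguity would otherwise occur. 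Conservation of mass and energy transfers from \eqref{eq:cons_NLS}, yielding the claimed $L^\infty_t$ bounds and completing the proof.
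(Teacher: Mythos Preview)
Your proposal is correct and is exactly the approach the paper has in mind: the paper does not supply a self-contained proof of this proposition but simply refers to \cite{AM1,AM2} (and Theorem~4.2 in \cite{AHMZ}), and the argument you sketch---solve the NLS globally in $H^1$, apply the polar factorization of Lemma~\ref{lemma:polar} at each time, and verify the weak formulation of \eqref{eq:QHD_1d} by rewriting every term as a quadratic in $\psi,\d_x\psi$---is precisely the content of those references specialized to one dimension. Your identification of the only delicate point (justifying the momentum identity across the vacuum by working at the level of $\psi$ rather than $(\sqrt\rho,\Lambda)$, then invoking $|\d_x\psi|^2=(\d_x\sqrt\rho)^2+\Lambda^2$ and Lemma~\ref{lemma:LL}) is also the correct one; the displayed momentum computation is a bit garbled as written, but once cleaned up it is the standard derivation.
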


We conclude this section by giving a preliminary discussion about the higher order functional introduced in \eqref{eq:higher} and a definition for the quantity $\lambda$ involved, formally related to the chemical potential $\mu$ by $\lambda=\sqrt{\rho}\mu$. A more detailed discussion about $\lambda$ can be found in Section \ref{sect:lambda}.

As mentioned in the Introduction, to obtain the stability properties of weak solutions, we will exploit the a priori bounds inferred from the functional 
\[
I(t)=\int_\R \lambda^2+(\d_t\sqrt\rho)^2dx,
\]
where $\lambda\in L^\infty(0,T;L^2(\R))$ is implicitly given by
\begin{equation}\label{eq:xi_def}
\xi=\sqrt\rho\lambda=-\frac14\d_x^2\rho+\frac12(\d_x\sqrt\rho)^2+\frac12\Lambda^2+f'(\rho)\rho,
\end{equation}
where $f(\rho)=\rho^\gamma/\gamma$ is the internal energy, see \eqref{eq:en_QHD}.
The functional $I(t)$ has a direct interpretation for Schr\"odinger-generated solutions. Indeed let us assume for the moment that we have $\rho=|\psi|^2$ and $J=\IM(\bar\psi\d_x\psi)$, for some solutions $\psi$ to \eqref{eq:NLS_1d}. Then we can see that the functional \eqref{eq:higher} actually equals
\begin{equation*}
I(t)=\int_\R|\d_t\psi|^2\,dx.
\end{equation*}
Indeed by using the polar factorization we have 
\[
\d_t\sqrt\rho=\RE(\bar\phi\d_t\psi),\quad \lambda=-\IM(\bar\phi\d_t\psi),%\quad \xi=\sqrt\rho\lambda=-\IM(\bar\psi\d_t\psi),
\]
where $\phi\in P(\psi)$ is a polar factor. On the other hand, by invoking Lemma \ref{lemma:LL}, we see that for Schr\"odinger-generated functions, $\lambda$ vanishes almost everywhere on the vacuum region $\{\rho=0\}$. 
Let us remark that in this case identity \eqref{eq:xi_def} is recovered from $\xi=\sqrt{\rho}\lambda=-\IM(\bar\psi\d_t\psi)$ and by  exploiting the fact that $\psi$ is a solution to \eqref{eq:NLS_1d}.
In Section \ref{sect:lambda} we provide a more detailed characterization for $\lambda$, in particular it will be given independently of the dynamics. For this reason, given an arbitrary hydrodynamic state $(\sqrt{\rho}, \Lambda)$, the properties of the generalized chemical potential $\lambda$ will be characterized in terms of $(\sqrt{\rho}, \Lambda)$ and their space derivatives. In what follows we provide a rigorous definition for $\lambda$, holding for arbitrary hydrodynamic states, not necessarily Schr\"odinger-generated.
\begin{defn}\label{def:lambda}
Let $(\sqrt{\rho}, \Lambda)\in H^1(\R)\times L^2(\R)$, such that $\Lambda=0$ a.e. on $\{\rho=0\}$ and $\d_x^2\rho\in L^1_{loc}(\R)$.
%Let $(\rho,J)$ be a finite energy weak solution to \eqref{eq:QHD_1d} such that $\d_x^2\rho\in L^1_{loc}(\R)$ and $\Lambda=0$ a.e. on $\{\rho=0\}$. 
Then we define the generalized chemical potential $\lambda$ to be the measurable function given by
\begin{equation}\label{eq:def_lambda}
\lambda=\left\{\begin{array}{cc}
-\frac12\d_{x}^2\sqrt{\rho}+\frac12\frac{\Lambda^2}{\sqrt{\rho}}+f'(\rho)\sqrt{\rho}&\textrm{in }\;\{\rho>0\}\\
0&\textrm{elsewhere}
\end{array}\right.
\end{equation}
\end{defn}
In what follows we provide the definition of bounded generalized chemical potential hydrodynamic states, namely those states where $\lambda$ can be rigorously characterized, see Section \ref{sect:lambda}.

\begin{defn}[GCP states]\label{def:GCP}
We say that the pair $(\sqrt{\rho}, \Lambda)$ is a state with bounded generalized chemical potential (GCP state, in short) if:
\begin{itemize}
\item $\sqrt{\rho}\in H^1(\R)$ and $\Lambda\in L^2(\R)$;
\item $\Lambda=0$ a.e. on $\{\rho=0\}$;
\item $\d_x^2\rho, \d_xJ\in L^1_{loc}(\R)$, where $(\rho,J)=((\sqrt\rho)^2,\sqrt\rho\Lambda)$;
%\item the energy density $e$ given in \eqref{eq:en_dens} is continuous;
\item the following bounds are satisfied
\begin{equation*}
\|\mathbf{1}_{\{\rho>0\}}\d_x J/\sqrt{\rho}\|_{L^2(\R)}+\|\lambda\|_{L^2(\R)}\leq C,
\end{equation*}
where $\lambda$ is defined as in \eqref{eq:def_lambda}.
\end{itemize}
\end{defn}
The notion of GCP states naturally yields a regularity class for weak solutions. As it will be clear, this is indeed the class of solutions for which Theorem \ref{thm:stab} applies.
Let us remark that the stability property shown in Theorem \ref{thm:stab} is independent of the rigorous characterization of $\lambda$ presented in Section \ref{sect:lambda}. Indeed for our stability result we will only need that, associated to $(\sqrt{\rho}, \Lambda)$, there exists a $\lambda\in L^\infty_tL^2_x$, uniformly bounded and satisfying identity \eqref{eq:lambdan_intro}.

\begin{defn}[GCP solutions]\label{def:cptsln}
Let $(\rho, J)$ be a finite energy weak solution to \eqref{eq:QHD_1d} on 
$[0,T]\times\R$. We say that $(\rho, J)$ is a GCP solution for the system \eqref{eq:QHD_1d} if and only if
\begin{itemize}
\item there exists $\lambda\in L^\infty(0, T;L^2(\R))$, such that
\begin{equation*}
\sqrt{\rho}\lambda=-\frac14\d_{x}^2\rho+e+p(\rho),
\end{equation*}
where the energy density $e$ is defined by
\begin{equation*}
e=\frac12(\d_x\sqrt{\rho})^2+\frac12\Lambda^2+f(\rho);
\end{equation*}
\item for some $0<M_1,M_2<\infty$, the following bounds are satisfied
\begin{equation*}
\begin{aligned}
\|\sqrt{\rho}\|_{L^\infty(0, T; H^1(\R))}+\|\Lambda\|_{L^\infty(0, T; L^2(\R))}\leq& M_1,\\
\|\d_t\sqrt{\rho}\|_{L^\infty(0, T; L^2(\R))}+\|\lambda\|_{L^\infty(0, T; L^2(\R))}\leq& M_2.
\end{aligned}
\end{equation*}
\end{itemize}
\end{defn}
\noindent
Moreover, in our analysis it will be also important to exploit the weak entropy inequality \eqref{eq:entr_ineq}.

\begin{defn}\label{def:entrsln}
Let $(\rho,J)$ be a weak solution to \eqref{eq:QHD_1d} on $[0, T)\times\R$. We say that $(\rho, J)$ satisfies the weak entropy inequality if 
\begin{itemize}
\item $e\in L^1_{loc}([0, T)\times\R)$;
\item $\lambda, \d_t\sqrt{\rho}\in L^2_{loc}([0, T)\times\R)$;
\item the following inequality
\begin{equation*}
\d_te+\d_x(\Lambda\lambda-\d_t\sqrt{\rho}\d_x\sqrt{\rho})\leq 0,
\end{equation*}
is satisfied in the sense of distributions on $[0, T)\times\R$.
\end{itemize}
\end{defn}

\section{Wave Function Lifting and Global Existence}\label{sect:lift}
In the previous section, see Lemma \ref{lemma:polar}, we saw that given a wave function $\psi\in H^1$, by means of the polar factorization it is possible to determine suitable hydrodynamical states $(\sqrt{\rho}, \Lambda)$. 
In this section we are going to prove the converse, namely given $(\sqrt{\rho}, \Lambda)$ with finite energy such that $\Lambda=0$ a.e. in the vacuum region $\{\rho=0\}$, it is possible to define an associated wave function.
The assumption that $\Lambda$ vanishes on the vacuum is quite reasonable in view of the polar factorization and of Lemma \ref{lemma:LL}. Indeed for $\psi\in H^1$, we have $\d_x\psi=0$ a.e. on $\{\rho=0\}$ and consequently the quantity $\Lambda\in L^2$ constructed in Lemma \ref{lemma:polar} satisfies $\Lambda=0$ a.e. in the vacuum region.

We will then exploit the results below in order to prove Theorem \ref{thm:glob} on the existence of global solutions to \eqref{eq:QHD_1d}. 
\begin{defn}\label{def:ass}
Let $\Omega\subset\R$ be an arbitrary open set. Given the hydrodynamic state $(\sqrt\rho,\Lambda)\in H^1(\Omega)\times L^2(\Omega)$ we say that the wave function $\psi\in H^1(\Omega)$ is associated to $(\sqrt{\rho}, \Lambda)$ if
\begin{equation}\label{eq:polar}
\sqrt\rho=|\psi|\quad \textrm{and} \quad \Lambda=\IM(\bar\phi\d_x\psi),
\end{equation}
where $\phi\in P(\psi)$ is a polar factor of $\psi$. 
\end{defn}
The following Proposition gives sufficient and necessary conditions for a pair of hydrodynamic data $(\sqrt\rho,\Lambda)$ to have an associated wave function $\psi\in H^1(\R)$.

\begin{prop}\label{prop:lift1}
Let $(\sqrt\rho,\Lambda)$ be a pair of hydrodynamic data. There exists an associated wave function $\psi\in H^1(\R)$ in the sense of the Definition \ref{def:ass}, if and only if $(\sqrt\rho,\Lambda)$ satisfies 
\begin{itemize}
\item[(1)] there exists a constant $0<M_1<\infty$ such that $\|\sqrt{\rho}\|_{H^1(\R)}+\|\Lambda\|_{L^2(\R)}\leq M_1$;
\item[(2)] $\Lambda=0$ a.e. $x$ in $\{\rho=0\}$.
\end{itemize}
Furthermore we have 
\[
\d_x\psi=(\d_x\sqrt\rho+i\Lambda)\phi,\quad \|\psi\|_{H^1(\R)}\leq C(M_1),
\]
where $\phi\in P(\psi)$ is a polar factor of $\psi$.
\end{prop}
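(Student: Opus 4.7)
The necessity of (1) and (2) is a direct consequence of the polar factorization machinery already collected in the preliminaries: if $\psi\in H^1(\R)$ is associated to $(\sqrt\rho,\Lambda)$, Lemma \ref{lemma:polar} gives the pointwise identity $|\d_x\psi|^2=(\d_x\sqrt\rho)^2+\Lambda^2$, which yields the bound in (1), while Lemma \ref{lemma:LL} applied to $\psi$ forces $\d_x\psi=0$ a.e.\ on $\{|\psi|=0\}=\{\rho=0\}$, so that $\Lambda=\IM(\bar\phi\,\d_x\psi)=0$ a.e.\ there. The substantive content is the converse, which I plan to prove by an explicit construction of the wave function component by component on the set where $\rho>0$.

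Since $\sqrt\rho\in H^1(\R)\hookrightarrow C(\R)$, the set $\Omega_+:=\{\rho>0\}$ is open, hence a countable disjoint union $\Omega_+=\bigsqcup_n I_n$ of open intervals. On each $I_n$, $\sqrt\rho$ is continuous and strictly positive, so $\sqrt\rho\ge\delta_K>0$ on every compact $K\subset I_n$; Cauchy--Schwarz then gives $\Lambda/\sqrt\rho\in L^1_{\mathrm{loc}}(I_n)$. Fixing a basepoint $x_n\in I_n$, I would set
\[
S_n(x):=\int_{x_n}^x\frac{\Lambda(y)}{\sqrt{\rho(y)}}\,dy,\qquad x\in I_n,
\]
and define
\[
\psi(x):=\begin{cases}\sqrt{\rho(x)}\,e^{iS_n(x)},&x\in I_n,\\ 0,&x\notin\Omega_+.\end{cases}
\]
Note that even if $S_n$ fails to admit a limit at $\d I_n$ (which can happen since $1/\sqrt\rho$ need not be integrable up to the boundary), the product $\sqrt\rho\,e^{iS_n}$ still tends continuously to $0$ there because $|\psi|=\sqrt\rho\to 0$. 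Setting $\phi:=e^{iS_n}$ on $I_n$ and $\phi:=1$ (say) on $\{\rho=0\}$ produces a polar factor with $|\phi|\le 1$ and $\sqrt\rho\,\phi=\psi$ a.e.

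It remains to verify that $\psi\in H^1(\R)$ with weak derivative $g:=(\d_x\sqrt\rho+i\Lambda)\phi$. On each $I_n$ the function $\psi$ is locally absolutely continuous with classical derivative $g$, and $|g|^2=(\d_x\sqrt\rho)^2+\Lambda^2\in L^1(I_n)$; combined with hypothesis (2) and Lemma \ref{lemma:LL} (which forces $\d_x\sqrt\rho=0$ a.e.\ on $\{\rho=0\}$), this shows $\|g\|_{L^2(\R)}^2=\int(\d_x\sqrt\rho)^2+\Lambda^2\,dx\le M_1^2$. The key step is to promote this pointwise identity on each $I_n$ into a global distributional one: for $\eta\in C_0^\infty(\R)$ I would split $\int_\R\psi\,\d_x\eta\,dx=\sum_n\int_{I_n}\psi\,\d_x\eta\,dx$, integrate by parts on each truncated interval $[a_n+\varepsilon,b_n-\varepsilon]$, and let $\varepsilon\to 0$; the boundary contributions $\psi(b_n-\varepsilon)\eta(b_n-\varepsilon)-\psi(a_n+\varepsilon)\eta(a_n+\varepsilon)$ vanish in the limit precisely because $\psi$ extends continuously to $0$ on $\d\Omega_+$. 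Dominated convergence (using $g\eta\in L^1$) then gives $\int_\R\psi\,\d_x\eta\,dx=-\int_\R g\,\eta\,dx$, so $\d_x\psi=g\in L^2$ and $\|\psi\|_{H^1}\le C(M_1)$.

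The only real obstacle is this last integration-by-parts step across the vacuum boundary: one must simultaneously exploit the continuity and vanishing of $\psi$ on $\d\Omega_+$ (to kill the boundary terms) and the a.e.\ vanishing of $\d_x\sqrt\rho$ and $\Lambda$ on $\{\rho=0\}$ (to ensure that no singular contribution at phase-boundary points is being missed in the candidate derivative $g$). The necessity of defining the phase $S_n$ separately on each $I_n$ reflects an essential obstruction---$\Lambda/\sqrt\rho$ need not be integrable across a vacuum point---and is also the reason the lift fails to be unique, as each component admits an independent constant phase shift.
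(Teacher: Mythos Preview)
Your argument is correct, but it follows a genuinely different route from the paper's. The paper proves sufficiency by a \emph{global regularization and weak compactness} argument: it perturbs $\sqrt\rho$ to $\sqrt{\rho_n}=\sqrt\rho+\delta_n$ with $\delta_n>0$ a Schwartz function tending to $0$, so that $\rho_n$ is strictly positive on all of $\R$; this permits a single globally defined phase $S_n(x)=\int_0^x \Lambda_n/\sqrt{\rho_n}$, whence $\psi_n=\sqrt{\rho_n}\,e^{iS_n}$ is uniformly bounded in $H^1$, and the desired $\psi$ is obtained as a weak $H^1$ limit together with a weak-$\ast$ limit of the phases. Your approach is instead \emph{direct and componentwise}: you define the phase separately on each connected component of $\{\rho>0\}$ and verify $H^1$ regularity by an integration-by-parts argument across the vacuum boundary, using continuity of $\sqrt\rho$ to kill the boundary terms and Lemma \ref{lemma:LL} together with hypothesis (2) to ensure the candidate derivative $g$ vanishes a.e.\ on the vacuum. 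Your construction is more elementary (no compactness needed) and makes the non-uniqueness via independent phase shifts on each component immediately visible, which the paper only observes a posteriori in Remark \ref{rmk:non_uniq}; the paper's approximation, on the other hand, sidesteps any explicit analysis of the vacuum boundary and is perhaps more robust if one later wants to perturb the data.
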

\begin{rem}
Conditions (1) and (2) in the statement of the Proposition above give necessary and sufficient conditions for the existence of an $H^1$ function $\psi$ associated to $(\sqrt{\rho}, \Lambda)$. Note however that such a wave function is not necessarily unique; this will be further discussed later in this section.
\end{rem}

\begin{proof}
Let $\psi\in H^1(\R)$ be associated to $(\sqrt{\rho}, \Lambda)$, then by means of the polar factorization in Lemma \ref{lemma:polar} we have that both conditions (1) and (2) are fulfilled. 

To prove the converse statement, we consider a sequence $\{\delta_n\}$ of Schwartz functions such that $\delta_n(x)>0$ for all $x\in\R$ and $\delta_n\to0$ as $n\to\infty$. For instance we may consider $\delta_n(x)=\frac1ne^{-|x|^2/2}$. We define the following approximating hydrodynamical quantities
\begin{equation*}
\sqrt{\rho_n}=\sqrt{\rho}+\delta_n, \quad\Lambda_n=\frac{J}{\sqrt{\rho_n}}=\frac{\sqrt{\rho}}{\sqrt{\rho_n}}\Lambda,
\end{equation*}
then we can check that also the sequence $\{(\sqrt{\rho_n}, \Lambda_n)\}$ satisfies a uniform bound as in condition (1). Indeed,
\begin{equation*}
\|\sqrt{\rho_n}\|_{H^1(\R)}\leq\|\sqrt{\rho}\|_{H^1(\R)}+\|\delta_n\|_{H^1(\R)}\leq C(M_1)
\end{equation*}
and since $\sqrt{\rho_n}(x)\geq\sqrt{\rho}(x)$ for every $x\in\R$, we have $|\Lambda_n(x)|\leq|\Lambda(x)|$ a.e. and consequently 
$\|\Lambda_n\|_{L^2(\R)}\leq\|\Lambda\|_{L^2(\R)}$. By construction it is straightforward that $\sqrt{\rho_n}\to\sqrt{\rho}$ in $H^1(\R)$. Moreover, by assumption we have $\Lambda(x)=0$ a.e. $x$ in the set $\{\rho=0\}$ and by definition of the approximants the same holds also for $\Lambda_n$. Hence $\sqrt{\rho_n}(x)\to\sqrt{\rho}(x)$ and $\Lambda_n(x)\to\Lambda(x)$ a.e. in $\R$.
Then the dominated convergence theorem yields $\Lambda_n\to\Lambda$ in $L^2(\R)$.
Furthermore, since $\sqrt{\rho_n}(x)>0$ it is possible to define the approximating velocity field
\begin{equation*}
v_n=\frac{\Lambda_n}{\sqrt{\rho_n}}.
\end{equation*}
Notice that, since by definition $\sqrt{\rho_n}$ is uniformly bounded away from zero on compact intervals, we have $v_n\in L^1_{loc}(\R)$, hence it makes sense to define the phase
\begin{equation*}
S_n(x)=\int_0^xv_n(x)\,dx
\end{equation*}
and consequently the wave function 
\begin{equation*}
\psi_n(x)=\sqrt{\rho_n}(x)e^{iS_n(x)}.
\end{equation*}
Let us remark that $\psi_n$ is uniquely defined, up to a constant phase shift. We now show that the sequence $\psi_n$ has a limit $\psi\in H^1(\R)$ satisfying $|\psi|=\sqrt{\rho}$,
$\IM(\bar\phi\d_x\psi)=\Lambda$, where $\phi\in P(\psi)$. Since
$\d_x\psi_n=e^{iS_n}(\d_x\sqrt{\rho_n}+i\Lambda_n)$, we have
\begin{equation*}
\|\psi_n\|_{H^1}^2=\|\d_x\sqrt{\rho_n}\|_{H^1}^2+\|\Lambda_n\|_{L^2}^2\leq C(M_1),
\end{equation*}
thus, up to subsequences, $\psi_n\rightharpoonup\psi$ in $H^1$. On the other hand, we also have $\sqrt{\rho_n}\to\sqrt{\rho}$ in $H^1$, $\Lambda_n\to\Lambda$ in $L^2$ and moreover $e^{iS_n}\rightharpoonup\phi$ weakly* in $L^\infty$, for some $\phi\in L^\infty$. It is straightforward to check that $\phi\in P(\psi)$, indeed we have $\psi_n=\sqrt{\rho_n}e^{iS_n}\rightharpoonup\sqrt{\rho}\phi=\psi$ in $L^2(\R)$. Furthermore
\begin{equation*}
\d_x\psi_n=e^{iS_n}\left(\d_x\sqrt{\rho_n}+i\Lambda_n\right)\rightharpoonup\phi\left(\d_x\sqrt{\rho}+i\Lambda\right), \quad\textrm{in}\;L^2(\R)
\end{equation*}
so that $\d_x\psi=\phi\left(\d_x\sqrt{\rho}+i\Lambda\right)$ and hence $(\sqrt{\rho}, \Lambda)$ are the hydrodynamical quantities associated to $\psi$ in the sense of the Definition \ref{def:ass}.
\end{proof}

We should emphasize that the wave function lifting given in Proposition \ref{prop:lift1} can not enjoy suitable stability properties in $H^1(\R)$, as opposed to the polar factorization given in Lemma \ref{lemma:polar}. Actually the wave function associated to a pair of finite energy hydrodynamic data can not be uniquely determined because of the arbitrary phase shifts allowed on each connected component of $\{\rho>0\}$, see the Remark \ref{rmk:non_uniq} below. In what follows we discuss some stability/instability properties of the wave function lifting. First of all, we show that on any connected component of the set $\{\rho>0\}$ the only source of non-uniqueness for the wave function $\psi$ associated to $(\sqrt{\rho}, \Lambda)$ is given by phase shifts.

\begin{lem}\label{lemma:rot}
Let $I\subset\R$ be an arbitrary interval and $\psi_1,\ \psi_2\in H^1(I)$ be two wave functions associated to the same hydrodynamic data $(\sqrt{\rho},\Lambda)\in H^1(I)\times L^2(I)$. Furthermore we assume that $\rho>0$ on the interval $I$. 

Then there exists a constant phase shift $\theta\in [0,2\pi)$, such that
\begin{equation}\label{eq:lemma16}
\psi_2=e^{i\theta}\,\psi_1.
\end{equation}
\end{lem}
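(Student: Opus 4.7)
My plan is to show that the ratio $u := \psi_2/\psi_1$ is a well-defined $H^1$ function on every compact subinterval of $I$, has unit modulus, and has zero (distributional) derivative; hence $u$ is a constant on the unit circle. The positivity assumption $\rho > 0$ is exactly what is needed to make quotient and logarithm operations legal.

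Since we work in one space dimension, Sobolev embedding gives $\sqrt{\rho}\in H^1(I)\subset C(I)$. For any compact subinterval $K\Subset I$, the positivity $\rho>0$ combined with continuity on the compact set $K$ yields $\sqrt{\rho}\geq c_K>0$ on $K$. On $K$ the polar factor of each $\psi_j$ is uniquely determined: $\phi_j=\psi_j/\sqrt{\rho}$ with $|\phi_j|=1$ pointwise. Moreover $1/\psi_1=\bar\phi_1/\sqrt{\rho}\in H^1(K)\cap L^\infty(K)$, and since $\psi_2\in H^1(K)\cap L^\infty(K)$ too, the product $u=\psi_2\cdot\psi_1^{-1}$ lies in $H^1(K)$ with $|u|=\sqrt{\rho}/\sqrt{\rho}=1$ a.e.\ on $K$.

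The core calculation invokes Proposition \ref{prop:lift1}: on $K$ we have
\[
\d_x\psi_j=\phi_j(\d_x\sqrt{\rho}+i\Lambda)=\psi_j\cdot\frac{\d_x\sqrt{\rho}+i\Lambda}{\sqrt{\rho}},\qquad j=1,2,
\]
so the ``logarithmic derivative'' $\d_x\psi_j/\psi_j$ is independent of $j$. Applying the $H^1$ quotient rule yields
\[
\d_x u=\frac{\d_x\psi_2}{\psi_1}-\frac{\psi_2\,\d_x\psi_1}{\psi_1^2}
=u\left(\frac{\d_x\psi_2}{\psi_2}-\frac{\d_x\psi_1}{\psi_1}\right)=0\quad\text{a.e.\ on }K.
\]
Combined with $|u|=1$, this forces $u\equiv e^{i\theta_K}$ on $K$ for some $\theta_K\in[0,2\pi)$.

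To globalize, I would exhaust $I$ by an increasing sequence of compact subintervals $K_n$ with $K_n\subset K_{n+1}$ and $\bigcup_n K_n=I$; since consecutive $K_n$ overlap, the local constants $\theta_{K_n}$ must coincide, giving a single $\theta$ such that $\psi_2=e^{i\theta}\psi_1$ on all of $I$. The only subtlety is to keep the quotient manipulations strictly on regions where $\sqrt{\rho}$ is bounded below, which is why I avoid writing $u=\psi_2/\psi_1$ directly on $I$ when $I$ is open or unbounded; once restricted to compact subintervals the argument is essentially the one-line identity that two $H^1$ functions with the same nonvanishing modulus and the same logarithmic derivative differ only by a constant phase.
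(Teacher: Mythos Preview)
Your proof is correct and follows essentially the same route as the paper: form the quotient $u=\psi_2/\psi_1$ (the paper uses $f=\psi_1/\psi_2$), use the polar factorization identity $\d_x\psi_j=(\d_x\sqrt{\rho}+i\Lambda)\phi_j$ to see that the logarithmic derivatives agree, conclude $\d_x u=0$, and hence $u$ is a unimodular constant. Your restriction to compact subintervals $K\Subset I$ before globalizing is in fact a bit more careful than the paper, which asserts $f\in H^1(I)$ directly; when $I$ is unbounded a unimodular function cannot lie in $L^2(I)$, so strictly speaking only $H^1_{loc}(I)$ is available, and your exhaustion argument handles this cleanly.
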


\begin{proof}
By our assumption and the polar factorization in Lemma \ref{lemma:polar}, we have 
\[
\psi_j=\sqrt\rho\phi_j\quad and\quad \d_x\psi_j=(\d_x\sqrt{\rho}+i\Lambda)\phi_j,\quad j=1,2,
\]
where $\phi_j\in P(\psi_j)$, $j=1,2$.
Since $\rho>0$ on the interval $I$, the polar factor $\phi_j$ is uniquely defined by
\[
\phi_j=\frac{\psi_j}{\sqrt\rho},
\]
then we can define function $f=\psi_1/\psi_2\in H^1(I)$. A direct computation shows that
\begin{align*}
\d_x f=\frac{1}{\psi_2^2}\left[\sqrt\rho\phi_2(\d_x\sqrt{\rho}+i\Lambda)\phi_1-\sqrt\rho\phi_1(\d_x\sqrt{\rho}+i\Lambda)\phi_2\right]=0,\quad a.e.\ \textrm{in}\; I.
\end{align*}
Since the interval $I$ is connected, the function $f=\psi_1/\psi_2$ is a constant $C$ on $I$,  which satisfies
\[
|C|=\frac{|\psi_1|}{|\psi_2|}=1,
\]
therefore formula \eqref{eq:lemma16} holds true.
\end{proof}

\begin{rem}\label{rmk:psiconv}
We now show that the stability in the hydrodynamic variables in general does not imply $H^1$ stability of the associated wave function. For simplicity we consider the problem on the interval $[-1,1]$, and let us consider the hydrodynamic quantities $\sqrt{\rho_0}=|x|$, $\Lambda_0=0$ and the wave function
\[
\psi_0(x)=e^{iH(x)\frac{\pi}{2}}|x|,
\]
where $H(x)$ is the Heaviside function.
It is straightforward to show that $\psi_0\in H^1([-1,1])$ is a wave function associated to $(\sqrt{\rho_0},\Lambda_0)=(|x|, 0)$. On the other hand, we consider the approximating hydrodynamic quantities $\sqrt{\rho_n}=(|x|+\frac1n)$ and $\Lambda_n=0$, then by Lemma \ref{lemma:rot} the only associated wave functions are given by 
\[
\psi_n=e^{i\theta_n}(|x|+1/n),
\]
where $\theta_n\in [0,2\pi)$. 
Now if $\{\tilde\psi_n\}$ is a Cauchy sequence in $H^1$, it follows that $\{e^{i\theta_n}\}$ is a Cauchy sequence in the unit circle $\mathcal{S}^1$. Therefore $\tilde\psi_n$ converges in $H^1$ to $\tilde\psi_0=e^{i\theta}|x|$ with $\theta\in[0,2\pi)$, which is also a wave function associated to $(\sqrt{\rho_0},\Lambda_0)$. However $\tilde\psi_0\ne\psi_0$ for any $\theta\in[0,2\pi)$. 
\end{rem}

It is straightforward to notice that the source of instability given by the previous example stems from the combination of two facts. The former one is the indetermination of the lifted wave function up to phase shift on connected components of $\{\rho>0\}$, whereas the latter is the change of the number of connect components of the non-vacuum regions in the limit.
Let us remark that a similar kind of instability has also been used in \cite{MS} to show non-uniqueness of solutions for the system \eqref{eq:QHD}.
On the other hand in the following Proposition \ref{prop:psiconv} we can show that there is at least one wave function associated to the limiting hydrodynamic data, which can be attained as a strong limit in $H^1(\R)$ of a subsequence of associated wave functions. Furthermore in Proposition \ref{prop:lift2} we will prove the wave function lifting at $H^2$ level. More precisely for hydrodynamic data satisfying suitable regularity assumption, we can specify a wave function in $H^2(\R)$ associated to it, which has well-designed phase shifts on all components of $\{\rho>0\}$, in such a way that $\d_x\psi$ has no phase jump at vacuum boundaries.

Before stating Propositions \ref{prop:psiconv} and Proposition \ref{prop:lift2}, here we collect some elementary properties of Sobolev functions that will be used later. Let $g$ be a function in $H^1(a,b)$ and $H^1(b,c)$, then $g\in H^1(a,c)$ if and only if $g$ is continuous at point $b$, and in this case we have
\[
\|\d_x g\|_{L^2(a,c)}^2=\|\d_x g\|_{L^2(a,b)}^2+\|\d_x g\|_{L^2(b,c)}^2.
\]
Then let us consider $g\in H^1(\Omega)$, where $\Omega\subset\R$ is an open set. The continuity of $g$ allows us to decompose the set $\{x;g(x)\ne0\}$ into disjoint open intervals, i.e. 
\begin{equation}\label{eq:poscomp}
\{x;g\ne0\}=\cup_{j}(a_j,b_j), \quad g(a_j)=g(b_j)=0.
\end{equation}
The next Lemma shows that we can introduce arbitrary constant phase shift on each component $(a_j,b_j)$, without breaking the $H^1$ regularity of $g$.

\begin{lem}\label{lemma:H1ext}
Let $g\in H^1(\Omega)$ and let $\Theta\in L^\infty(\Omega)$ be a piecewise constant phase shift given by the formula
\begin{equation}\label{eq:theta}
\Theta=\exp\left(i\sum_j\theta_j\mathbf{1}_{(a_j,b_j)}\right),\quad \theta_j\in[0,2\pi),
\end{equation}
where $(a_j,b_j)$'s are the components of $\{g\ne 0\}$ as in \eqref{eq:poscomp}. 
Then we have $\Theta g\in H^1(\Omega)$, and 
\begin{equation}\label{eq:dtheta}
\d_x (\Theta g)=\Theta \d_x g.
\end{equation} 
\end{lem}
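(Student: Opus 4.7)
The plan is to identify $\Theta\,\partial_x g$ as the weak derivative of $\Theta g$ by a direct distributional argument, exploiting the fact that $g$ vanishes continuously at each of the endpoints $a_j, b_j$. Since $|\Theta|=1$ pointwise, both $\Theta g$ and $\Theta\,\partial_x g$ sit in $L^2(\Omega)$, so the only nontrivial task is verifying the identity
\begin{equation*}
\int_\Omega \Theta g\,\partial_x\varphi\,dx=-\int_\Omega \Theta\,\partial_x g\,\varphi\,dx\qquad \forall\varphi\in\mathcal{C}^\infty_0(\Omega).
\end{equation*}

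To carry this out I would split $\Omega$ into the closed set $\Omega_0:=\{g=0\}$ (closed relative to $\Omega$ because Sobolev embedding makes $g$ continuous) and its complement $\bigcup_j(a_j,b_j)$. On $\Omega_0$ the integrand $\Theta g\,\partial_x\varphi$ vanishes identically, while the integrand $\Theta\,\partial_x g\,\varphi$ vanishes almost everywhere by Lemma \ref{lemma:LL}. On each component $(a_j,b_j)$ the factor $\Theta$ reduces to the constant $e^{i\theta_j}$ and $g\in H^1(a_j,b_j)$ extends continuously to the closure with $g(a_j)=g(b_j)=0$, so classical integration by parts yields
\begin{equation*}
\int_{a_j}^{b_j}e^{i\theta_j}g\,\partial_x\varphi\,dx=-\int_{a_j}^{b_j}e^{i\theta_j}\,\partial_x g\,\varphi\,dx+e^{i\theta_j}\bigl[g\varphi\bigr]_{a_j}^{b_j},
\end{equation*}
and the boundary term vanishes identically since $g(a_j)=g(b_j)=0$. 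Summing in $j$ — the sum converges absolutely by Cauchy--Schwarz, as $\sum_j\int_{a_j}^{b_j}|\partial_x g\,\varphi|\le\|\partial_x g\|_{L^2}\|\varphi\|_{L^2}$ and similarly for the other term — and reassembling with the contribution from $\Omega_0$ produces the desired identity, which proves both that $\Theta g\in H^1(\Omega)$ and that formula \eqref{eq:dtheta} holds.

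The only real subtlety is that the decomposition into components is a priori countable, so one must ensure that no delta-type contributions accumulate at the endpoints of the intervals; this is precisely guaranteed by the continuity of $g$ at each $a_j,b_j$, which makes each individual boundary term vanish separately rather than cancel through telescoping. It is worth noting that this is exactly the mechanism that fails in higher dimensions without extra structural hypotheses: in 1D the one-dimensional Sobolev embedding $H^1\hookrightarrow C$ combined with Lemma \ref{lemma:LL} does all the work, whereas in $\R^d$ with $d\ge 2$ the traces of $g$ on the boundaries of the positivity components need not be defined pointwise and piecewise phase shifts can indeed destroy $H^1$ regularity.
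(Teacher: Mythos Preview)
Your argument is correct and follows essentially the same route as the paper: both proofs verify the weak-derivative identity by decomposing the integral over the connected components of $\{g\neq 0\}$, integrating by parts on each with vanishing boundary terms since $g(a_j)=g(b_j)=0$, and then invoking Lemma~\ref{lemma:LL} to see that the contribution from $\{g=0\}$ is null. Your additional remarks on the absolute convergence of the countable sum and on the failure of the argument in higher dimensions are accurate but not needed for the proof itself.
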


\begin{proof}
The proof of the Lemma follows a standard argument of weak derivative. We take $\eta\in C_c^\infty(\Omega)$ to be a test function and consider the weak derivative $\d_x(\Theta g)$:
\begin{align*}
\int_\Omega\eta\d_x (\Theta g)\,dx=&-\int_\Omega \Theta g\d_x\eta\,dx=-\sum_j\int_{a_j}^{b_j}e^{i\theta_j}g\d_x\eta \,dx.
\end{align*}
On each interval $(a_j,b_j)$ by integration by parts and $g(a_j)=g(b_j)=0$, we obtain
\[
\int_\Omega\eta\d_x (\Theta g)\,dx=\sum_j\int_{a_j}^{b_j}e^{i\theta_j}\eta \d_x g \,dx.
\]
Furthermore the vanishing derivative Lemma \ref{lemma:LL} implies $\d_x g=0$ a.e. outside $\{x;g(x)\ne0\}=\underset{j}{\cup}(a_j,b_j)$, therefore we can conclude
\[
\int_\Omega\eta\d_x (\Theta g)\,dx=\int_\Omega\eta \Theta\d_xg\,dx,
\]
namely $\d_x (\Theta g)=\Theta\d_xg\in L^2(\Omega)$, which finishes the proof of the Lemma.
\end{proof}

\begin{rem}\label{rmk:non_uniq}
As pointed out before, a direct consequence of Lemma \ref{lemma:H1ext} is the non-uniqueness of wave function lifting for  given hydrodynamic data at $H^1$ level, due to the arbitrary phase shifts allowed on the components of the non-vacuum regions. More precisely, let $\psi\in H^1(\R)$ be a wave function associated to $(\sqrt\rho,\Lambda)$ in the sense of the Definition \ref{def:ass}, then for any piecewise phase shift function $\Theta$ of the form \eqref{eq:theta}, by formula \eqref{eq:dtheta} it is straightforward to check that $\Theta\psi\in H^1(\R)$ is another wave function associated to the same hydrodynamic data, whose polar factors are $P(\Theta\psi)=\Theta P(\psi)$.
\end{rem}

\begin{prop}\label{prop:psiconv}
Let us consider a sequence of hydrodynamic data $\{(\sqrt{\rho_n},\Lambda_n)\}$ which converges to $(\sqrt{\rho_0},\Lambda_0)$ in $H^1(\R)\times L^2(\R)$. Assume $\{\psi_n\}$ and $\psi_0$ are wave functions associated to $\{(\sqrt{\rho_n},\Lambda_n)\}$ and $(\sqrt{\rho_0},\Lambda_0)$, respectively.

Then there exists a subsequence $\psi_{n_k}$ and a piecewise constant phase shift $\Theta$ given by the formula \eqref{eq:theta} on the connected components of $\{\psi_0\ne0\}$, such that 
\[
\lim_{n_k\to\infty}\|\psi_{n_k}-\Theta\,\psi_0\|_{H^1(\R)}=0.
\]
\end{prop}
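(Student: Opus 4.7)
The plan is to extract a weakly convergent subsequence in $H^1(\R)$, identify the weak limit as a wave function associated to $(\sqrt{\rho_0},\Lambda_0)$, use Lemma \ref{lemma:rot} on each connected component of $\{\rho_0>0\}$ to express the limit as $\Theta\psi_0$, and finally upgrade weak convergence to strong convergence by verifying convergence of the $H^1$ norms.

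First I would establish the uniform bound $\|\psi_n\|_{H^1}\leq C$, which is immediate since $\|\psi_n\|_{L^2}^2=\int\rho_n\,dx$ and $\|\d_x\psi_n\|_{L^2}^2=\|\d_x\sqrt{\rho_n}\|_{L^2}^2+\|\Lambda_n\|_{L^2}^2$ (using the formula for $\d_x\psi_n$ from Proposition \ref{prop:lift1}), and both are bounded by the convergence of $(\sqrt{\rho_n},\Lambda_n)$ in $H^1\times L^2$. Passing to a subsequence, $\psi_{n_k}\rightharpoonup\psi$ weakly in $H^1(\R)$, hence strongly in $L^2_{loc}(\R)$ and pointwise a.e. Combining this with the uniform convergence $\sqrt{\rho_n}\to\sqrt{\rho_0}$ in $C_0(\R)$ (Sobolev embedding), we get $|\psi|=\sqrt{\rho_0}$ a.e.

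Next I would identify the limit as a wave function associated to $(\sqrt{\rho_0},\Lambda_0)$. Let $\phi_n\in P(\psi_n)$ be a polar factor. By Banach-Alaoglu, up to a further subsequence, $\phi_n\rightharpoonup^*\phi$ in $L^\infty(\R)$ with $\|\phi\|_{L^\infty}\leq 1$. Using the strong convergence $\sqrt{\rho_n}\to\sqrt{\rho_0}$ in $H^1$ and $\Lambda_n\to\Lambda_0$ in $L^2$, together with the identity $\d_x\psi_n=(\d_x\sqrt{\rho_n}+i\Lambda_n)\phi_n$ from Proposition \ref{prop:lift1}, I would pass to the weak-$L^2_{loc}$ limit in $\psi_n=\sqrt{\rho_n}\phi_n$ and $\d_x\psi_n=(\d_x\sqrt{\rho_n}+i\Lambda_n)\phi_n$ to conclude $\psi=\sqrt{\rho_0}\phi$ and $\d_x\psi=(\d_x\sqrt{\rho_0}+i\Lambda_0)\phi$ with $\phi\in P(\psi)$. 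Hence $\psi$ is a wave function associated to $(\sqrt{\rho_0},\Lambda_0)$ in the sense of Definition \ref{def:ass}. Writing $\{\rho_0>0\}=\bigcup_j(a_j,b_j)$ and applying Lemma \ref{lemma:rot} on each interval $(a_j,b_j)$, there exist constants $\theta_j\in[0,2\pi)$ such that $\psi=e^{i\theta_j}\psi_0$ on $(a_j,b_j)$. Since both $\psi_0$ and $\psi$ vanish a.e. on $\{\rho_0=0\}$, we obtain $\psi=\Theta\psi_0$ a.e. in $\R$, with $\Theta$ of the form \eqref{eq:theta}.

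Finally I would promote weak to strong convergence in $H^1(\R)$ by checking norm convergence. Directly from the hydrodynamic convergence,
\[
\|\psi_{n_k}\|_{L^2}^2=\int\rho_{n_k}\,dx\to\int\rho_0\,dx=\|\psi_0\|_{L^2}^2,
\]
and
\[
\|\d_x\psi_{n_k}\|_{L^2}^2=\|\d_x\sqrt{\rho_{n_k}}\|_{L^2}^2+\|\Lambda_{n_k}\|_{L^2}^2\to\|\d_x\sqrt{\rho_0}\|_{L^2}^2+\|\Lambda_0\|_{L^2}^2=\|\d_x\psi_0\|_{L^2}^2,
\]
where the last equality also uses \eqref{eq:dtheta} (i.e.\ $\|\d_x(\Theta\psi_0)\|_{L^2}=\|\d_x\psi_0\|_{L^2}$). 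Hence $\|\psi_{n_k}\|_{H^1}\to\|\Theta\psi_0\|_{H^1}$, and combined with weak $H^1$ convergence to $\Theta\psi_0$, the Hilbert space structure yields $\psi_{n_k}\to\Theta\psi_0$ strongly in $H^1(\R)$.

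I expect the main subtlety to be the identification of the weak limit: namely carefully handling the weak-$*$ limit of the polar factors together with the strong convergence of the moduli and momenta, so that $\psi=\sqrt{\rho_0}\phi$ with $\phi\in P(\psi)$, rather than a function with $|\phi|<1$ on a set of positive measure (which would destroy the identification $|\psi|=\sqrt{\rho_0}$ at the $H^1$ level). The use of Sobolev embedding to get uniform convergence of $\sqrt{\rho_n}$ and the pointwise a.e.\ convergence $|\psi_{n_k}|=\sqrt{\rho_{n_k}}\to\sqrt{\rho_0}$ is what locks down this identification; everything else is a matter of combining Proposition \ref{prop:lift1}, Lemma \ref{lemma:rot} and Lemma \ref{lemma:H1ext}.
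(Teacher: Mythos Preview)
Your proposal is correct and follows essentially the same approach as the paper: extract a weak-$*$ limit of polar factors in $L^\infty$, identify the weak $H^1$ limit of $\psi_{n_k}$ as a wave function associated to $(\sqrt{\rho_0},\Lambda_0)$ via the product structure $\d_x\psi_n=(\d_x\sqrt{\rho_n}+i\Lambda_n)\phi_n$, apply Lemma \ref{lemma:rot} componentwise to obtain $\Theta$, and upgrade to strong $H^1$ convergence by matching norms through the polar factorization identity. The paper's version is slightly terser about why $|\tilde\psi_0|=\sqrt{\rho_0}$ (and hence $|\tilde\phi_0|=1$ on $\{\rho_0>0\}$), whereas you make this explicit via the compact embedding; otherwise the arguments coincide.
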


\begin{proof}
For the case that $\rho_0\equiv 0$, the Proposition is trivial. Therefore we assume $\rho_0$ is not identically $0$. 

By Lemma \ref{lemma:H1ext}, for any phase shift $\Theta$ given by \eqref{eq:theta}, we have $\Theta\psi_0\in H^1(\R)$ with
\[
\d_x(\Theta\psi_0)=\Theta\d_x\psi_0.
\] 
By the polar decomposition Lemma \ref{lemma:polar} we have 
\begin{equation}\label{eq:polar_n}
\psi_{n}=\sqrt{\rho_{n}}\phi_{n}\quad and\quad \d_x\psi_{n}=(\d_x\sqrt{\rho_{n}}+i\Lambda_{n})\phi_{n}
\end{equation}
with a polar factor $\phi_{n}\in P(\psi_{n})$. Since $\|\phi_n\|_{L^\infty}\leq1$, then up to a subsequence $\{\phi_{n_k}\}$ converges weakly* to some $\tilde\phi_0$ in $L^\infty(\R)$. We define $\tilde\psi_0=\sqrt{\rho_0}\tilde\phi_0$. Since $(\sqrt{\rho_n},\Lambda_n)$ converges strongly to $(\sqrt{\rho_0},\Lambda_0)$ in $H^1(\R)\times L^2(\R)$, by passing to the limit as $n_k\to\infty$ in \eqref{eq:polar_n} we obtain
\begin{align*}
\psi_{n_k}=\sqrt{\rho_{n_k}}\phi_{n_k}\rightharpoonup \sqrt{\rho_0}\tilde\phi_0=\tilde\psi_0,\quad \textrm{in}\;L^2(\R),
\end{align*}
and
\begin{align*}
\d_x\psi_{n_k}=(\d_x\sqrt{\rho_{n_k}}+i\Lambda_{n_k})\phi_{n_k}\rightharpoonup (\d_x\sqrt{\rho_0}+i\Lambda_0)\tilde\phi_0, \quad \textrm{in}\;L^2(\R).
\end{align*}
Consequently, $\tilde\psi_0$ is a weak limit of $\psi_{n_k}$, with $\tilde\psi_0\in H^1(\R)$ and
\[
\d_x\tilde\psi_0=(\d_x\sqrt{\rho_0}+i\Lambda_0)\tilde\phi_0.
\]
Moreover the polar factorization of $\psi_{n_k}$ and $\tilde\psi_0$ implies 
\begin{align*}
\|\psi_{n_k}\|_{H^1(\R)}^2=\|\sqrt{\rho_{n_k}}\|_{H^1(\R)}^2+\|\Lambda_{n_k}\|_{L^2(\R)}^2\to \|\sqrt{\rho_0}\|_{H^1(\R)}^2+\|\Lambda_0\|_{L^2(\R)}^2=\|\tilde\psi_0\|_{H^1(\R)}^2,
\end{align*}
then by the weak convergence and the convergence of norms we obtain $\psi_{n_k}\to\tilde\psi_0$ in $H^1(\R)$. Since $\tilde\psi_{0}$ is a wave function associated to $(\sqrt{\rho_0},\Lambda_0)$, by Lemma \ref{lemma:rot} on each component $(a_j,b_j)$ of $\{\psi_0\ne0\}$, there exist a unitary $e^{i\theta_j}$ such that
\[
\tilde\psi_0=e^{i\theta_j}\psi_0\;on\;(a_j,b_j).
\]
Let 
\[
\Theta=e^{i\sum_j\theta_j\mathbf{1}_{(a_j,b_j)}},
\]
then we have $\tilde\psi_0=\Theta\,\psi_0$ on $\R$.
\end{proof}

Now we are going to prove the wave function lifting at the $H^2$ level. Sobolev regularity for the wave function does not correspond to further regularity for the hydrodynamical quantities. For example if we consider $\psi(x)=x$ on $[-1, 1]$, then $\psi\in\mathcal C^\infty$, but on the other hand $\sqrt{\rho}$ is only Lipschitz, with $\d_{x}^2\sqrt{\rho}=\delta_0$. In fact, considering a wave function $\psi\in H^2$ will yield further information on the energy density and on $\lambda$. Another important fact we should emphasise when considering a wave function lifting $\psi\in H^2(\R)$ is that $\d_x\psi$ should be a continuous function. As mentioned before, $\psi$ should have well designed phase shifts on all the connected components of the set $\{\rho>0\}$, such that $\d_x\psi$ has no jump discontinuity at vacuum boundaries.

\begin{prop}\label{prop:lift2}
Let $(\sqrt\rho,\Lambda)$ be a hydrodynamic state. There exists a wave function $\psi\in H^2(\R)$, associated to $(\sqrt\rho,\Lambda)$ in the sense of Definition \ref{def:ass}, if and only if
\begin{itemize}
\item $(\sqrt{\rho}, \Lambda)$ is a GCP state in the sense of Definition \ref{def:GCP};
\item the energy density $e$ given in \eqref{eq:en_dens} is continuous.
\end{itemize}
Moreover, if we assume 
\begin{equation}\label{eq:bd_lift2}
\begin{aligned}
\|\sqrt{\rho}\|_{H^1}+\|\Lambda\|_{L^2}\leq& M_1,\\
\|\mathbf{1}_{\{\rho>0\}}\d_x J/\sqrt{\rho}\|_{L^2(\R)}+\|\lambda\|_{L^2(\R)}\leq &M_2,
\end{aligned}
\end{equation}
for some constants $M_1,M_2<\infty$, where $\lambda$ is defined in \eqref{eq:def_lambda}, then
it follows that
\begin{equation}\label{eq:H2_lift}
\|\psi\|_{H^2(\R)}\leq C(M_1,M_2).
\end{equation}
\end{prop}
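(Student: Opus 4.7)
The plan is to prove both implications using the identity
\[
\d_x^2\psi = \phi\left[\d_x^2\sqrt\rho - \frac{\Lambda^2}{\sqrt\rho} + i\frac{\d_x J}{\sqrt\rho}\right] = \phi\left[-2\lambda + 2f'(\rho)\sqrt\rho + i\frac{\d_x J}{\sqrt\rho}\right]\quad\text{on }\{\rho>0\},
\]
obtained by differentiating the polar factorization $\d_x\psi=(\d_x\sqrt\rho+i\Lambda)\phi$ once more and using that on $\{\rho>0\}$ the polar factor $\phi=\psi/\sqrt\rho$ satisfies $|\phi|=1$ and $\bar\phi\d_x\phi=i\Lambda/\sqrt\rho$; the second equality is just Definition \ref{def:lambda}. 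This formula is the workhorse of the argument since it converts second derivatives of $\psi$ into hydrodynamic quantities.

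For necessity, suppose $\psi\in H^2(\R)$ is associated to $(\sqrt\rho,\Lambda)$. Since $H^2\hookrightarrow C^1$ in one dimension, $|\d_x\psi|^2=(\d_x\sqrt\rho)^2+\Lambda^2$ is continuous, and combined with continuity of $f\circ\rho$ this gives continuity of $e=\tfrac12|\d_x\psi|^2+f(\rho)$. Reading off the real and imaginary parts of $\bar\phi\d_x^2\psi$ from the identity, I then bound $\|\lambda\|_{L^2}$ and $\|\mathbf{1}_{\{\rho>0\}}\d_xJ/\sqrt\rho\|_{L^2}$ by $\|\d_x^2\psi\|_{L^2}+\|f'(\rho)\sqrt\rho\|_{L^2}$, so $(\sqrt\rho,\Lambda)$ is a GCP state in the sense of Definition \ref{def:GCP}.

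For sufficiency, I would first invoke Proposition \ref{prop:lift1} to produce some $\psi\in H^1(\R)$ associated to $(\sqrt\rho,\Lambda)$, and then upgrade it to $H^2$. Decomposing the open set $\{\rho>0\}=\cup_j(a_j,b_j)$ into maximal intervals, the identity applied componentwise together with the GCP bounds gives $\d_x^2\psi\in L^2((a_j,b_j))$ with uniform bounds, hence $\psi\in H^2((a_j,b_j))$ and $\d_x\psi$ is continuous on each such interval. The delicate point—which I expect to be the main obstacle—is to show that $\d_x\psi$ has no jump at any boundary point $x_0\in\d\{\rho>0\}$, so that the pieces glue to a global $H^1$ derivative. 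Here continuity of $e$ together with continuity of $f(\rho)$ forces $|\d_x\psi|^2=2e-2f(\rho)$ to be continuous on $\R$, and since $\d_x\psi=0$ a.e.\ on $\{\rho=0\}$ by Lemma \ref{lemma:LL}, one concludes $|\d_x\psi(x_0)|^2=0$ and hence $\d_x\psi(x)\to 0$ as $x\to x_0$ from within $(a_j,b_j)$. Extending $\d_x\psi$ by $0$ outside $\{\rho>0\}$ thus produces a continuous, piecewise $H^1$ function, which by the gluing criterion recalled just before Lemma \ref{lemma:H1ext} belongs to $H^1(\R)$, giving $\psi\in H^2(\R)$.

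Finally, for the quantitative bound \eqref{eq:H2_lift}, I control $\|\psi\|_{L^2}$ and $\|\d_x\psi\|_{L^2}$ directly by $M_1$ via polar factorization, and $\|\d_x^2\psi\|_{L^2}$ via the identity together with \eqref{eq:bd_lift2}: the terms $\|\lambda\|_{L^2}$ and $\|\mathbf{1}_{\{\rho>0\}}\d_xJ/\sqrt\rho\|_{L^2}$ are bounded by $M_2$, while the pressure-type contribution $\|f'(\rho)\sqrt\rho\|_{L^2}^2=\int\rho^{2\gamma-1}\,dx\leq\|\rho\|_{L^\infty}^{2\gamma-2}\|\sqrt\rho\|_{L^2}^2$ is handled by the 1D Sobolev embedding $\|\rho\|_{L^\infty}\leq C\|\sqrt\rho\|_{H^1}^2\leq CM_1^2$. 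What makes the argument work is precisely the observation that the non-phase-sensitive hypothesis of continuity of $e$ suffices to rule out phase jumps of the complex-valued $\d_x\psi$ at vacuum boundaries, through the intermediate continuity of $|\d_x\psi|^2$.
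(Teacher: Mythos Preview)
Your necessity argument is fine and matches the paper. The gap is in the sufficiency direction, specifically in your claim that $|\d_x\psi(x_0)|^2=0$ at every vacuum boundary point $x_0$.

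Your inference is: $|\d_x\psi|^2=2e-2f(\rho)$ is continuous, $\d_x\psi=0$ a.e.\ on $\{\rho=0\}$ by Lemma~\ref{lemma:LL}, hence $|\d_x\psi(x_0)|=0$. But this only follows if $x_0$ is a density point of $\{\rho=0\}$; at an \emph{isolated} vacuum point the set $\{\rho=0\}$ has measure zero nearby and the a.e.\ statement gives no information. The paper's own Remark~\ref{rmk:psiconv} furnishes the counterexample: take $(\sqrt\rho,\Lambda)=(|x|,0)$ on $[-1,1]$. The energy density $e\equiv\tfrac12$ is continuous, yet $|\d_x\psi|=1$ at the isolated vacuum point $x=0$. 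The wave function $\psi_0(x)=e^{iH(x)\pi/2}|x|$ is an $H^1$ lift associated to this data, but $\d_x\psi_0$ jumps at $0$ and $\psi_0\notin H^2$. So the $\psi$ produced by Proposition~\ref{prop:lift1} need not be in $H^2$, and your gluing argument fails.

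What the paper does instead is exactly what this example suggests: at each isolated vacuum point $\mathring a_{j_k}$ the one-sided limits $\d_x\psi(\mathring a_{j_k}^\pm)$ exist (from $H^2$ on each component) and have equal modulus (from continuity of $|\d_x\psi|$), so they differ by a unimodular constant. One then inductively chooses phase shifts $\theta_{j_k}$ and sets $\tilde\psi=\Theta\psi$ with $\Theta=\exp(i\sum_j\theta_j\mathbf 1_{(a_j,b_j)})$ so that $\d_x\tilde\psi$ is continuous across each isolated vacuum point; Lemma~\ref{lemma:H1ext} guarantees $\tilde\psi$ is still an $H^1$ lift of the same data. Your argument \emph{does} work at the remaining boundary points---boundaries of positive-measure vacuum and accumulation points of vacuum boundaries---and the paper treats those separately via Lemma~\ref{lemma:H1e}, showing $e$ (hence $|\d_x\psi|$) vanishes there. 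The missing idea in your proposal is precisely this phase-correction step at isolated vacuum points; once you insert it, the rest of your outline (and your quantitative bound for \eqref{eq:H2_lift}) goes through.
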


In order to prove Proposition \ref{prop:lift2}, we first state the following Lemma, which shows the connection between the hydrodynamic quantities $\lambda$, $\frac{\d_xJ}{\sqrt\rho}\mathbf{1}_{\{\rho>0\}}$ and the $H^2$ regularity of the associated wave function $\psi$.

\begin{lem}\label{lemma:H2}
\begin{itemize}
\item[(1)] If the hydrodynamic state $(\sqrt\rho,\Lambda)$ is given through an associated wave function $\psi\in H^2(\R)$, then $(\sqrt{\rho}, \Lambda)$ is a GCP state and the identities
\begin{equation}\label{eq:201}
\frac{\d_x J}{\sqrt\rho}\mathbf{1}_{\{\rho>0\}}=\IM(\bar\phi\d_x^2\psi),\quad \lambda-f'(\rho)\sqrt\rho=-\frac12\RE(\bar\phi\d_x^2\psi),
\end{equation}
hold true in $L^2(\R)$, where $\phi\in P(\psi)$ is a polar factor of $\psi$.

\item[(2)] Let us assume $(\sqrt\rho,\Lambda)$ is a GCP state and let $\psi\in H^1(\R)$ be a wave function associated to $(\sqrt\rho,\Lambda)$, then $\psi\in H^2(a_j,b_j)$, for all connected components $(a_j,b_j)$ of the set $\{\rho>0\}$ as given in \eqref{eq:poscomp}, and we have 
\begin{equation}\label{eq:dx2psi}
\d_x^2\psi=\left[2(f'(\rho)\sqrt\rho-\lambda)+i\frac{\d_xJ}{\sqrt\rho}\right]\phi\quad \textrm{in}\; L^2(a_j,b_j).
\end{equation}
\end{itemize}
\end{lem}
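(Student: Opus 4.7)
The plan is to derive both parts of the lemma from a single distributional computation of $\d_x^2\psi$ via the polar factorization $\d_x\psi = (\d_x\sqrt\rho + i\Lambda)\phi$ from Lemma \ref{lemma:polar}, and then to interpret the resulting identity in the two different contexts.

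For Part~(1), I start with $\psi\in H^2(\R)$ and note that $\bar\psi\d_x\psi = \tfrac{1}{2}\d_x\rho + iJ$ lies in $H^1(\R)$; differentiating and using $|\d_x\psi|^2=(\d_x\sqrt\rho)^2+\Lambda^2$ gives
\[
\bar\psi\d_x^2\psi = \tfrac{1}{2}\d_x^2\rho - (\d_x\sqrt\rho)^2 - \Lambda^2 + i\d_xJ,
\]
so $\d_x^2\rho,\d_xJ\in L^2\subset L^1_{loc}$. On $\{\rho>0\}$, where $\phi = \psi/\sqrt\rho$ is uniquely determined, I divide by $\sqrt\rho$ and use $\d_x^2\rho = 2(\d_x\sqrt\rho)^2 + 2\sqrt\rho\,\d_x^2\sqrt\rho$ to obtain
\[
\bar\phi\d_x^2\psi = \d_x^2\sqrt\rho - \frac{\Lambda^2}{\sqrt\rho} + i\frac{\d_xJ}{\sqrt\rho};
\]
comparing the real and imaginary parts with \eqref{eq:def_lambda} yields the identities \eqref{eq:201} on $\{\rho>0\}$. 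To extend them to $\{\rho=0\}$, I apply Lemma \ref{lemma:LL} first to $\psi\in H^1$ (so $\d_x\psi=0$ a.e.\ there) and then to $\d_x\psi\in H^1$ (so $\d_x^2\psi=0$ a.e.\ there), which makes $\bar\phi\d_x^2\psi$ vanish a.e.\ on the vacuum set independently of the choice of $\phi$. The GCP bounds then follow from $\|\d_x^2\psi\|_{L^2}$ together with the elementary estimate on $\|f'(\rho)\sqrt\rho\|_{L^2}$ obtained from $\sqrt\rho\in H^1(\R)\subset L^2\cap L^\infty$.

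For Part~(2), given a GCP state $(\sqrt\rho,\Lambda)$ and an associated $\psi\in H^1(\R)$, I fix a connected component $(a_j,b_j)$ of $\{\rho>0\}$ and work on compact subintervals $K\subset (a_j,b_j)$, where continuity of $\sqrt\rho$ gives $\sqrt\rho\geq c_K>0$. On such $K$ the polar factor $\phi = \psi/\sqrt\rho$ belongs to $H^1(K)$, and $|\phi|^2=1$ combined with $\d_x\psi=(\d_x\sqrt\rho + i\Lambda)\phi$ yields $\d_x\phi = i\Lambda\phi/\sqrt\rho$. The GCP hypotheses $\d_x^2\rho,\d_xJ\in L^1_{loc}$ together with the lower bound on $\sqrt\rho$ allow me to extract $\d_x^2\sqrt\rho$ and $\d_x\Lambda$ distributionally on $K$, and the product rule then produces
\[
\d_x^2\psi = \phi\Bigl[\Bigl(\d_x^2\sqrt\rho - \frac{\Lambda^2}{\sqrt\rho}\Bigr) + i\frac{\d_xJ}{\sqrt\rho}\Bigr].
\]
Substituting the definition \eqref{eq:def_lambda} of $\lambda$ into the real part gives precisely \eqref{eq:dx2psi}. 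The uniform GCP bounds on $\lambda$, on $\d_xJ/\sqrt\rho\,\mathbf{1}_{\{\rho>0\}}$, and on $f'(\rho)\sqrt\rho$ show that the right-hand side lies in $L^2(a_j,b_j)$ globally, so by the sheaf property of distributional derivatives $\d_x^2\psi$ equals this function on all of $(a_j,b_j)$ and hence $\psi\in H^2(a_j,b_j)$.

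The main technical obstacle is the handling of the vacuum set: in Part~(1), ensuring that $\bar\phi\d_x^2\psi$ is unambiguously defined on $\{\rho=0\}$ in spite of the non-uniqueness of $\phi$, which is resolved by iterating Lemma \ref{lemma:LL}; and in Part~(2), justifying the distributional product-rule computation up to the endpoints of $(a_j,b_j)$ where $\sqrt\rho$ touches zero, which is handled by performing the calculation on interior compact subintervals and then recognizing that the resulting $L^2(a_j,b_j)$ formula already furnishes the correct global distributional derivative.
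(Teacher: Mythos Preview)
Your proposal is correct and follows essentially the same route as the paper's proof: both parts hinge on the polar factorization identity $\d_x\psi=(\d_x\sqrt\rho+i\Lambda)\phi$, the computation $\d_x\phi=i\Lambda\phi/\sqrt\rho$ on the non-vacuum set, and Lemma~\ref{lemma:LL} to handle the vacuum in Part~(1). Your write-up is in fact slightly more careful than the paper's in two respects---you spell out the two-step application of Lemma~\ref{lemma:LL} (first to $\psi$, then to $\d_x\psi\in H^1$) needed to conclude $\d_x^2\psi=0$ a.e.\ on $\{\rho=0\}$, and you make explicit the passage from compact subintervals of $(a_j,b_j)$ to the whole component---but these are expository refinements rather than a different argument.
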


\begin{proof}
Let us consider $(\sqrt\rho,\Lambda)$ given through an associated wave function $\psi\in H^2(\R)$, then it follows Proposition \ref{prop:lift1} that $(\sqrt\rho,\Lambda)\in H^1(\R)\times L^2(\R)$. Moreover, by the definition of the hydrodynamic quantities $\rho=|\psi|^2$, $J=\IM(\bar\psi\d_x\psi)$, we have on the set $\{\rho>0\}=\{|\psi|>0\}$ 
\[
\IM(\bar\phi\d_x^2\psi)=\frac{\d_x \IM(\bar\psi\d_x\psi)}{|\psi|}=\frac{\d_x J}{\sqrt\rho}.
\]
Moreover, by using the definition \eqref{eq:def_lambda} for $\lambda$, on the set $\{|\psi|>0\}$ we have
\begin{align*}
-\frac12\RE(\bar\phi\d_x^2\psi)=&-\frac{1}{2|\psi|}\left[\d_x\RE(\bar\psi\d_x\psi)-|\d_x\psi|^2\right]\\
=&-\frac{1}{2\sqrt\rho}\left[\frac12\d_x^2\rho-(\d_x\sqrt\rho)^2-\Lambda^2\right]\\
=&-\frac12\d_x^2\sqrt\rho+\frac{\Lambda^2}{2\sqrt\rho}=\lambda-f'(\rho)\sqrt\rho.
\end{align*}
Let us emphasize that the previous computations are rigorous since $\psi\in H^2(\R)$ and $\{|\psi|>0\}$ is an open set.
By Lemma \ref{lemma:LL} we have $\d_x^2\psi=0$ a.e. $x$ in $\{\rho=0\}$, thus identities \eqref{eq:201} hold almost everywhere on $\R$.

Now let us assume $(\sqrt\rho,\Lambda)$ to be a GCP state, and let $\psi\in H^1(\R)$ be a wave function associated to $(\sqrt\rho,\Lambda)$. By using the polar factorization, it follows that
\[
\psi=\sqrt\rho\phi,\;\d_x\psi=(\d_x\sqrt\rho+i\Lambda)\phi,
\]
where $\phi\in P(\psi)$. Since $|\psi|>0$ on $(a_j,b_j)$, the polar factor $\phi\in P(\psi)$ is uniquely defined, $\phi=\frac{\psi}{|\psi|}$, and a direct computation gives
\begin{align*}
\d_x\phi=\frac{1}{|\psi|}\left(\d_x\psi-\d_x|\psi|\frac{\psi}{|\psi|}\right)
=\frac{i\Lambda}{\sqrt\rho}\phi.
\end{align*}
Using the identities above and the definition of hydrodynamic quantities, we obtain
\begin{align*}
\left[2(f'(\rho)\sqrt\rho-\lambda)+i\frac{\d_xJ}{\sqrt\rho}\right]\phi=&\left(\d_x^2\sqrt\rho+i\d_x\Lambda+i\d_x\sqrt\rho\frac{\Lambda}{\sqrt\rho}-\frac{\Lambda^2}{\sqrt\rho}\right)\phi\\
=&\d_x\left[(\d_x\sqrt\rho+i\Lambda)\phi\right]=\d_x(\d_x\psi).
\end{align*}
Thus we prove the identity \eqref{eq:dx2psi}, and by using the bounds of $\lambda$ and $\frac{\d_xJ}{\sqrt{\rho}}$ given in the Definition \ref{def:GCP} of GCP state, it follows that $\psi\in H^2(a_j,b_j)$.
\end{proof}
The previous Lemma shows that the condition
\begin{equation*}
\|\mathbf{1}_{\{\rho>0\}}\d_x J/\sqrt{\rho}\|_{L^2(\R)}+\|\lambda\|_{L^2(\R)}\leq M_2,
\end{equation*}
on the hydrodynamic state allows to improve on the regularity of the wave function $\psi$ on the connected components of $\{\rho>0\}$.
However, in general $\psi$ is not in $H^2(\R)$, since $\d_x\psi$ may possibly experience jump discontinuities at vacuum boundaries, due to the fact that the $H^1$ regularity allows for arbitrary phase shifts on connected components, see the Remark \ref{rmk:non_uniq}.
On the other hand, $H^2$ regularity requires compatibility conditions of the phase shift at vacuum boundaries.
We will show that, by additionally assuming the continuity of energy density, it is possible to provide an appropriate choice of the phase shifts on every connected component, which will enable us to construct another wave function $\tilde\psi\in H^2$ associated to the same hydrodynamical quantities. More precisely, starting from $\psi$, we will construct a $\tilde\psi\in H^2(\R)$ such that $\tilde\psi=\Theta\psi$, where $\Theta$ is a piecewise phase shift of the form
\begin{equation}%\label{eq:sigmarot}
\Theta=\exp\left(i\sum_j\theta_j\mathbf{1}_{(a_j,b_j)}\right),\quad \theta_j\in[0,2\pi).
\end{equation}
The main difficulty of our construction lies in the accumulation points of vacuum boundaries. To overcome this difficulty, we need the next Lemma which shows that under the assumption of Proposition \ref{prop:lift2}, the energy density $e$ vanishes at the accumulation points of vacuum boundaries, and $\sqrt{e}$ is indeed a function in the space $H^1(\R)$.

\begin{lem}\label{lemma:H1e}
Let $(\sqrt\rho,\Lambda)$ be a GCP hydrodynamic state and let us assume that the energy density $e$ is continuous. If $\tilde x$ is an accumulation point of vacuum boundaries $\underset{j}{\cup}\{a_j,b_j\}$, then $e(\tilde x)=0$.
\end{lem}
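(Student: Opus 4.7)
The plan is to extract a subsequence of $\{\rho>0\}$-components $I_n = (a_{j_n},b_{j_n})$ squeezing down to $\tilde x$ and to show that $\int_{I_n} e \, dx = o(|I_n|)$; combined with the continuity of $e$ at $\tilde x$, this will force $e(\tilde x) = 0$. First I would record some regularity of $\rho$ near vacuum boundaries. Since $\sqrt\rho \in H^1(\R) \hookrightarrow C(\R)$, $\rho$ is continuous with $\rho(\tilde x)=0$. The GCP hypothesis gives $\d_x^2\rho \in L^1_{loc}(\R)$ and, from $\d_x\rho = 2\sqrt\rho\,\d_x\sqrt\rho \in L^2_{loc}(\R)$, we get $\d_x\rho \in W^{1,1}_{loc}(\R) \hookrightarrow C(\R)$. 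Since $\rho \ge 0$ attains its global minimum at every vacuum boundary $a_j$, $b_j$, the continuity of $\d_x\rho$ forces $\d_x\rho(a_j) = \d_x\rho(b_j) = 0$.

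Next, because the countable endpoint set $\cup_j\{a_j,b_j\}$ accumulates at $\tilde x$, I extract a strictly monotone subsequence of distinct endpoints $x_n \to \tilde x$ (from one side), and associate to each $x_n$ a component $I_n = (a_{j_n}, b_{j_n})$ of $\{\rho>0\}$ for which $x_n$ is an endpoint. Since $\rho(\tilde x)=0$, no $I_n$ can contain $\tilde x$, so the disjointness of the components pins $I_n$ inside the gap between consecutive members of the Cauchy sequence $\{x_n\}$. This yields $|I_n| \to 0$ with both endpoints of $I_n$ converging to $\tilde x$, so $I_n$ lies in arbitrarily small neighborhoods of $\tilde x$.

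On each $I_n \subset \{\rho>0\}$, multiplying the pointwise formula \eqref{eq:def_lambda} for $\lambda$ by $\sqrt\rho$ and using the identities $\sqrt\rho\,\d_x^2\sqrt\rho = \tfrac{1}{2}\d_x^2\rho - (\d_x\sqrt\rho)^2$ and $\rho f'(\rho) = f(\rho) + p(\rho)$, I obtain the pointwise relation
\[
e = \sqrt\rho\,\lambda + \tfrac{1}{4}\d_x^2\rho - p(\rho).
\]
Integrating over $I_n$, the contribution of $\d_x^2\rho$ equals $\d_x\rho(b_{j_n}) - \d_x\rho(a_{j_n}) = 0$. From $\sqrt\rho(a_{j_n})=0$ and $\|\d_x\sqrt\rho\|_{L^2(\R)} \le M_1$ I get $\sup_{I_n}\sqrt\rho \le |I_n|^{1/2}M_1$, hence $\|\sqrt\rho\|_{L^2(I_n)} \le |I_n|\,M_1$. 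Cauchy--Schwarz together with absolute continuity of the integral of $\lambda^2 \in L^1(\R)$ on the shrinking sets $I_n$ then yields
\[
\Big|\int_{I_n}\sqrt\rho\,\lambda\,dx\Big| \le |I_n|\,M_1\,\|\lambda\|_{L^2(I_n)} = o(|I_n|),
\]
while $\int_{I_n} p(\rho)\,dx = O(|I_n|^{1+\gamma})$. Therefore $\int_{I_n} e\,dx = o(|I_n|)$, and since $I_n$ collapses to $\tilde x$, continuity of $e$ at $\tilde x$ gives $\tfrac{1}{|I_n|}\int_{I_n} e\,dx \to e(\tilde x)$; comparison with the previous estimate forces $e(\tilde x) = 0$.

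The main obstacle is the regime where $\{\rho = 0\}$ has measure zero in every neighborhood of $\tilde x$, so that one cannot simply approximate $\tilde x$ by Lebesgue points of the vacuum (where $e$ vanishes a.e.\ because $\d_x\sqrt\rho$ and $\Lambda$ do) and conclude by continuity. The plan above sidesteps this difficulty by extracting quantitative smallness from the shrinking of the $\{\rho>0\}$-components themselves: the vanishing of $\d_x\rho$ at vacuum boundaries cancels the troublesome $\d_x^2\rho$ boundary term, and the $H^1$-bound on $\sqrt\rho$ forces it to be pointwise small on short intervals, which in turn controls the $\sqrt\rho\,\lambda$ cross term via the $L^2$ bound in the GCP hypothesis.
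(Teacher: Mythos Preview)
Your proof is correct and takes a genuinely different route from the paper's. The paper argues via the wave function lifting: it associates $\psi\in H^1(\R)$ to the state, writes $e=\tfrac12|\d_x\psi|^2+f(|\psi|^2)$ so that continuity of $e$ gives continuity of $|\d_x\psi|$, then uses the piecewise $H^2$ regularity of $\psi$ on each component (Lemma~\ref{lemma:H2}) together with $\psi(a_{j_k})=\psi(b_{j_k})=0$ to derive a contradiction from $\underset{k}{\liminf}|\d_x\psi(x_{j_k})|>c>0$ at midpoints $x_{j_k}$, via the estimate $|\d_x\psi(x_{j_k})|\,\delta_{j_k}\le \delta_{j_k}^{3/2}\|\d_x^2\psi\|_{L^2}$.

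Your argument is purely hydrodynamic: you never invoke a wave function, instead integrating the identity $e=\sqrt\rho\,\lambda+\tfrac14\d_x^2\rho-p(\rho)$ over shrinking components and exploiting two facts, (i) that $\d_x\rho\in C(\R)$ vanishes at vacuum boundaries (so the $\d_x^2\rho$ term integrates to zero), and (ii) that $\sup_{I_n}\sqrt\rho=O(|I_n|^{1/2})$ makes the $\sqrt\rho\,\lambda$ term $o(|I_n|)$. This is more elementary and arguably better aligned with the paper's stated goal of a ``genuinely hydrodynamic'' treatment. The paper's approach, on the other hand, fits naturally into its broader lifting machinery and yields the continuity of $|\d_x\psi|$ as an intermediate object, which is reused immediately afterward in the construction of the $H^2$ lift. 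One small point: your extraction of components with $|I_n|\to0$ is slightly underspecified (if the accumulating $x_n$ happen to be right endpoints approaching from the left, the associated component might extend far leftward); the clean fix is to observe that at most two components can protrude outside $(\tilde x-\delta,\tilde x+\delta)$, so infinitely many lie entirely inside, and one may select $I_n\subset(\tilde x-1/n,\tilde x+1/n)$.
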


\begin{proof}
By using Proposition \ref{prop:lift1}, there exists a $\psi\in H^1(\R)$ associated to $(\sqrt\rho,\Lambda)$, and the energy density is given by
\[
e=\frac12|\d_x\psi|^2+f(|\psi|^2).
\]
In particular, the continuity of $e$ implies that $|\d_x\psi|$ is also a continuous function. To prove Lemma \ref{lemma:H1e}, it is sufficient to show $|\d_x\psi|$ vanishes at the accumulation points of vacuum boundaries.

Let us consider  an accumulation point $\tilde x$ of the vacuum boundaries $\cup_j\{a_j,b_j\}$, namely there exists a sequence of intervals $\{(a_{j_k},b_{j_k})\}_k$ such that $(a_{j_k},b_{j_k})\to \tilde x$ as $k\to\infty$. On each $(a_{j_k},b_{j_k})$, we take $x_{j_k}=\frac12(a_{j,k}+b_{j_k})$, and we claim $\d_x\psi(x_{j_k})\to 0$ as $k\to\infty$. If the claim holds, by the continuity of $|\d_x\psi|$ we obtain $|\d_x\psi(\tilde x)|=0$. 

The claim $\d_x\psi(x_{j_k})\to 0$ can be proved by a contradiction. Indeed, we assume that
$$\underset{k}{\liminf}|\d_x\psi(x_{j_k})|>c>0.$$
Since $a_{j_k}$ and $b_{j_k}$ are vacuum boundary points, namely $\psi(a_{j_k})=\psi(b_{j_k})=0$, then we have
\[
0=\int_{a_{j_k}}^{b_{j_k}}\d_x\psi(y)dy.
\]
Moreover by the property (2) of Lemma \ref{lemma:H2}, we have $\psi\in H^2(a_{j_k},b_{j_k})$, so we can decompose the integral as
\begin{align*}
0=&\int_{a_{j_k}}^{b_{j_k}}\left[\d_x\psi(x_{j_k})+\int_{x_{j_k}}^{y}\d_x^2\psi(s)ds\right]dy\\
=&\d_x\psi(x_{j_k})\delta_{j_k}+\int_{a_{j_k}}^{b_{j_k}}\int_{x_{j_k}}^{y}\d_x^2\psi(s)dsdy,
\end{align*}
where $\delta_{j_k}=b_{j_k}-a_{j_k}$. By using the identity \eqref{eq:dx2psi} and the bounds \eqref{eq:bd_lift2}, it follows that
\[
\|\d_x^2\psi\|_{L^2(a_{j_k},b_{j_k})}\leq 2\|f'(\rho)\sqrt\rho-\lambda\|_{L^2}+\|\frac{\d_xJ}{\sqrt\rho}\mathbf{1}_{\{\rho>0\}}\|_{L^2}\leq C(M_1,M_2),
\]
then we have
\[
\left|\int_{a_{j_k}}^{b_{j_k}}\int_{x_{j_k}}^{y}\d_x^2\psi(s)dsdy\right|\leq \delta_{j_k}^\frac32\|\d_x^2\psi\|_{L^2(a_{j_k},b_{j_k})}
\leq \delta_{j_k}^\frac32 C(M_1,M_2).
\]
For $k$ large enough one has $|\d_x\psi(x_{j_k})|>c$ and $\delta_{j_k}^\frac12 C(M_1,M_2)<\frac{c}{2}$ since $\delta_{j_k}\to0$ as $k\to\infty$, hence it follows that
\begin{align*}
0\ge |\d_x\psi(x_{j_k})\delta_{j_k}|-\left|\int_{a_{j_k}}^{b_{j_k}}\int_{x_{j_k}}^{y}\d_x^2\psi(s)dsdy\right|>\frac{c}{2}\delta_{j_k}>0,
\end{align*}
which is a contradiction. 
\end{proof}

Now we are ready to prove Proposition \ref{prop:lift2}.

\begin{proof}[Proof of Proposition \ref{prop:lift2}]
Let us consider the hydrodynamic data $(\sqrt\rho,\Lambda)$ given through an associated wave function $\psi\in H^2(\R)$, then by the property (1) of Lemma \ref{lemma:H2} we know that $(\sqrt\rho,\Lambda)$ is a GCP state.
Moreover, by the definition \eqref{eq:en_dens} of the energy density and by the polar factorization \eqref{eq:quad}
\[
e=\frac12(\d_x\sqrt\rho)^2+\frac12\Lambda^2+f(\rho)=\frac12|\d_x\psi|^2+f(|\psi|^2),
\]
and therefore $e$ is a continuous function.

Now let us assume that $(\sqrt\rho,\Lambda)$ is a GCP state, and  decompose the non vacuum regions into disjoint intervals,
\begin{equation}\label{eq:non_vac}
\{\rho>0\}=\cup_{j}(a_j, b_j),\quad \rho(a_j)=\rho(b_j)=0.
\end{equation}
Lemma \ref{lemma:H2} shows that on all the connected components $(a_j,b_j)$ of $\{\rho>0\}$ we have $\psi\in H^2(a_j,b_j)$, and the identity \eqref{eq:dx2psi} holds true in $L^2(a_j,b_j)$. However as discussed before, to obtain a $H^2$ wave function defined on the whole $\R$, we need to overcome the possible jump discontinuity at vacuum boundaries. The additionally assumption on the continuity of energy density allows us to provide an well-designed choice of the phase shifts on every connected component and to construct another wave function $\tilde\psi\in H^2$ associated to the same hydrodynamical quantities. More precisely, starting from $\psi$, we will construct a piecewise phase shift function $\Theta$ of the form
\begin{equation}\label{eq:sigmarot}
\Theta=\exp\left(i\sum_j\theta_j\mathbf{1}_{(a_j,b_j)}\right),\quad \theta_j\in[0,2\pi),
\end{equation} 
such that $\tilde\psi=\Theta\psi$ belongs to $H^2(\R)$.

Before the construction of $\Theta$, we first give some discussion on vacuum boundaries $\cup_j\{a_j,b_j\}$. Let $\{\mathring{a}_j\}$ denote the isolated vacuum points, namely $\rho(\mathring{a}_j)=0$ and $\rho>0$ on $(\mathring{a}_j-\epsilon,\mathring{a}_j+\epsilon)\setminus\{\mathring{a}_j\}$ for some small $\epsilon>0$. Then we define the set
\[
W=\{\rho>0\}\underset{j}{\cup}\{\mathring{a}_j\}.
\]
By the continuity of $\rho$ and the definition of $\mathring{a}_j$, it is straightforward to see that $W$ is an open set, consequently it can be represented as disjoint open intervals
\[
W=\underset{j}{\cup}(\bar{a}_j,\bar{b}_j),
\]
where $\bar{a}_j,\bar{b}_j\in\cup_j\{a_j,b_j\}\setminus\cup_j\{\mathring{a}_j\}$. The set $\cup_j\{a_j,b_j\}\setminus\cup_j\{\mathring{a}_j\}$ consists of the following two types of vacuum boundary points. The first type is the boundary of large vacuum with positive measure, namely $|\psi|^2=\rho \equiv 0$ on $(\bar{a}_j-\epsilon,\bar{a}_j)$ or on $(\bar{b}_j,\bar{b}_j+\epsilon)$ for some $\eps>0$, and by the continuity of $|\d_x\psi|$ we have $|\d_x\psi(\bar{a}_j)|=|\d_x\psi(\bar{b}_j)|=0$. The second type is the accumulation point of vacuum boundaries, where $|\d_x\psi|$ also vanishes as proved in Lemma \ref{lemma:H1e}. 

Let us fix an interval $(\bar{a}_j,\bar{b}_j)$ and let $\{\mathring{a}_{j_k}\}_{k=K_1}^{K_2}$ denote the vacuum points in $(\bar{a}_j,\bar{b}_j)$. By our construction, the only possible accumulation points of $\{\mathring{a}_{j_k}\}$ are $\bar{a}_j$ and $\bar{b}_j$, hence we can assume $\mathring{a}_{j_k}<\mathring{a}_{j_{k+1}}$ for all $k$. 

The phase shift function $\Theta$ on $(\bar{a}_j,\bar{b}_j)$ is constructed through the following strategy. We start from a fixed $(\mathring{a}_{j_0},\mathring{a}_{j_1})$ and set $\theta_{j_0}=0$, namely $\Theta=1$ on $(\mathring{a}_{j_0},\mathring{a}_{j_1})$. To extend the definition of $\Theta$ to $(\mathring{a}_{j_1},\mathring{a}_{j_2})$, we notice that the intervals $(\mathring{a}_{j_k},\mathring{a}_{j_{k+1}})$ are connected components of the set $\{\rho>0\}$. Thus by Lemma \ref{lemma:H2} we have the $H^2$ regularity of $\psi$ on $(\mathring{a}_{j_0},\mathring{a}_{j_1})$ and $(\mathring{a}_{j_1},\mathring{a}_{j_2})$, which implies the the left and right side limits $\d_x\psi(\mathring{a}_{j_1}^-)$ and $\d_x\psi(\mathring{a}_{j_1}^+)$ exist. On the other hand, the continuity of the energy density $e$ implies $|\d_x\psi|=\sqrt{2e-f(\rho)}$ is continuous, hence we have $|\d_x\psi(\mathring{a}_{j_1}^-)|=|\d_x\psi(\mathring{a}_{j_1}^+)|$. Therefore we can choose a $\theta_{j_1}\in [0,2\pi)$ such that $(\Theta\cdot\d_x\psi)(\mathring{a}_{j_1}^-)=e^{i\theta_{j_1}}\d_x\psi(\mathring{a}_{j_1}^+)$ ($\theta_{j_1}=0$ if $|\d_x\psi|$ vanishes at $\mathring{a}_{j_1}$), and we define $\Theta=e^{i\theta_{j_1}}$ on $(\mathring{a}_{j_1},\mathring{a}_{j_2})$. By repeating this process inductively, we extend the definition of $\Theta$ to the whole $(\bar{a}_j,\bar{b}_j)$. Moreover we have $\Theta\d_x\psi\in H^1(\mathring{a}_{j_k},\mathring{a}_{j_k})$ and our construction ensures the continuity of $\Theta\d_x\psi$ at all point vacuum $\mathring{a}_{j_k}$. Thus by the elementary property of Sobolev functions, it follows that $\Theta\d_x\psi\in H^1(\bar{a}_j,\bar{b}_j)$, and by Lemma \ref{lemma:H1ext} and Lemma \ref{lemma:H2} we have the identity
\begin{equation}\label{eq:401}
\d_x(\Theta\d_x\psi)=\left[\d^2_x\sqrt\rho-\frac{\Lambda^2}{\sqrt\rho}+i\frac{\d_xJ}{\sqrt\rho}\right]\Theta\phi\quad in\;L^2(\bar{a}_j,\bar{b}_j),\quad \phi\in P(\psi).
\end{equation}

To this point we have constructed the phase shift function $\Theta$ on $W=\cup_j(\bar{a}_j,\bar{b}_j)$, and we extend $\Theta$ to the whole $\R$ by definition $\Theta=1$ on $W^c$, thus $\Theta$ has the form \eqref{eq:sigmarot} as required, and we define $\tilde\psi=\Theta\psi$. By Lemma \ref{lemma:H1ext} we have $\d_x\tilde\psi=\Theta\d_x\psi$, it is straightforward to check $\tilde\psi\in H^1(\R)$ is also a wave function associated to $(\sqrt\rho,\Lambda)$ with the polar factorization 
\begin{equation}
\d_x\tilde\psi=(\d_x\sqrt\rho+i\Lambda)\Theta\phi.
\end{equation}
Now we claim $\d_x\tilde\psi=\Theta\d_x\psi\in H^1(\R)$. Let $\eta\in C_c^\infty(\R)$ be an arbitrary test function, then we have
\[
\int_{\R}\eta\,\d_x(\Theta\d_x\psi)dr\coloneqq-\int_{\R}(\Theta\d_x\psi)\d_x\eta \,dr.
\]
By the vanishing derivative Lemma it follows that $\d_x\psi=0$ a.e. outside the set $W$, therefore
\[
\int_{\R}\eta\,\d_x(\Theta\d_x\psi)dr=-\int_{W}(\Theta\d_x\psi)\d_x\eta\,dx=-\underset{j}{\sum}\int_{\bar{a}_j}^{\bar{b}_j}(\Theta\d_x\psi)\d_x\eta\,dx.
\]
Since $\Theta\d_x\psi\in H^1(\bar{a}_j,\bar{b}_j)$, we can write
\[
\int_{\bar{a}_j}^{\bar{b}_j}(\Theta\d_x\psi)\d_x\eta\,dx=-\int_{\bar{a}_j}^{\bar{b}_j}\eta\,\d_x(\Theta\d_r\psi)dx+\eta(\bar{b}_j)(\Theta\d_x\psi)(\bar{b}_j)-\eta(\bar{a}_j)(\Theta\d_x\psi)(\bar{a}_j),
\]
where the boundary terms vanish since $\d_x\psi(\bar{a}_j)=\d_x\psi(\bar{b}_j)=0$ as we have shown before. By using the identity \eqref{eq:401} and the bounds for GCP states in Definition \ref{def:GCP}, we obtain
\begin{align*}
\left|\int_{\R}\eta\,\d_x(\Theta\d_x\psi)dx\right|\leq & \|\eta\|_{L^2(\R)}\left(\|\d^2_x\sqrt\rho-\frac{\Lambda^2}{\sqrt\rho}\|_{L^2(W)}+\|\frac{\d_rJ}{\sqrt\rho}\|_{L^2(W)}\right)\\
\leq & C(M_1,M_2)\|\eta\|_{L^2(\R)}.
\end{align*}
Thus we conclude $\d_x\tilde\psi=\Theta\d_x\psi\in H^1(\R)$. Moreover again by the vanishing derivative Lemma, we have $\d_x(\Theta\d_x\psi)=0$ a.e. on the set $\{\rho=0\}=\{\psi=0\}$, therefore the identity
\begin{equation}\label{eq:402}
\d_x^2\tilde\psi=\d_x(\Theta\d_x\psi)=\left[\d^2_x\sqrt\rho-\frac{\Lambda^2}{\sqrt\rho}+i\frac{\d_xJ}{\sqrt\rho}\right]\mathbf{1}_{\{\rho>0\}}\Theta\phi
\end{equation}
holds true in $L^2(\R)$, and the bound \eqref{eq:H2_lift} for the wave function $\tilde\psi$ follows straightforwardly.
\end{proof}

We conclude this section by proving Theorems \ref{thm:glob} and \ref{thm:glob2}.

\begin{proof}[Proof of Theorems \ref{thm:glob} and \ref{thm:glob2}]
Let us consider $(\sqrt{\rho_0}, \Lambda_0)$ satisfying \eqref{eq:C1_intro} and such that $\Lambda_0=0$ a.e. on $\{\rho_0=0\}$. By Proposition \ref{prop:lift1} we know there exists a $\psi_0\in H^1$ associated to $(\sqrt{\rho_0}, \Lambda_0)$. Let $\psi\in\mathcal C([0,T]; H^1(\R))$ be the solution to \eqref{eq:NLS_1d} with initial datum $\psi(0)=\psi_0$ given by Theorem \ref{thm:NLS}, then the polar factorization method and Proposition \ref{prop:old_1d} imply that $\sqrt{\rho}=|\psi|$, $\Lambda=\IM(\bar\phi\d_x\psi)$, $\phi\in P(\psi)$, is a global in time finite energy weak solution to \eqref{eq:QHD_1d}. Furthermore by using also Proposition \ref{prop:old_1d} we know that the total energy is a conserved quantity.

Now we are going to prove Theorem \ref{thm:glob2} regarding the $H^2$ regularity. If $(\sqrt{\rho_0}, \Lambda_0)$ satisfies \eqref{eq:C1_intro},\eqref{eq:C2_intro} and the continuity of the energy density, then by Proposition \ref{prop:lift2} we can choose the initial wave function to be $\psi_0\in H^2(\R)$. 
By the persistence of regularity for the NLS equation \eqref{eq:NLS_1d}, see Theorem \ref{thm:NLS}, for any $0<T<\infty$  we have $\psi\in\mathcal C([0,T];H^2(\R))\cap\mathcal C^1([0,T];L^2(\R))$ and property \eqref{eq:H2} holds. By using the bound \eqref{eq:H2_lift}, property \eqref{eq:H2} implies
\begin{equation}\label{eq:dt}
\|\d_t\psi\|_{L^\infty_tL^2_x}\leq C(T, M_1, M_2).
\end{equation}
Now, let us recall the definition of $\lambda=-\IM(\bar\phi\d_t\psi)$, with $\phi\in P(\psi)$, then by the polar factorization and \eqref{eq:dt}, we get
\begin{equation}\label{eq:higher_unif}
\|\d_t\sqrt{\rho}\|_{L^\infty_tL^2_x}+\|\lambda\|_{L^\infty_tL^2_x}\leq C(T, M_1, M_2),
\end{equation}
which implies that the functional given in \eqref{eq:higher} is uniformly bounded for a.e. $t\in[0, T]$.
Moreover,
\begin{equation*}
\sqrt{\rho}\lambda=-\IM(\bar\psi\d_t\psi)=-\frac14\d_x^2\rho+\frac12(\d_x\sqrt{\rho})^2+\frac12\Lambda^2+\rho f'(\rho),
%=&\frac12\sqrt{\rho}\d_x^2\sqrt{\rho}-\frac12\Lambda^2-\rho f'(\rho).
%\end{aligned}
\end{equation*}
so that \eqref{eq:lambda_intro} holds.
Next we are going to show the bounds in \eqref{eq:B2_c}. By the conservation of energy we know that
\begin{equation*}
\|\sqrt{\rho}\|_{L^\infty_tH^1_x}+\|\Lambda\|_{L^\infty_tL^2_x}\leq M_1.
\end{equation*}
Since $\d_x^2\rho=2\RE(\bar\psi\d_x^2\psi)+2|\d_x\psi|^2$, by standard H\"older's inequality, Sobolev embedding and \eqref{eq:H2}, we get
\begin{equation*}
\|\d_x^2\rho\|_{L^\infty_tL^2_x}\lesssim\|\psi\|^2_{L^\infty_tH^2_x}\leq C(T, M_1, M_2).
\end{equation*}
On the other hand, by the continuity equation, standard Sobolev embedding and \eqref{eq:higher_unif}, we have
\begin{equation*}
\|\d_xJ\|_{L^\infty_tL^2_x}=\|\d_t\rho\|_{L^\infty_tL^2_x}\lesssim\|\sqrt{\rho}\|_{L^\infty_{t, x}}\|\d_t\sqrt{\rho}\|_{L^\infty_tL^2_x}\leq C(T, M_1, M_2).
\end{equation*}
By the polar factorization we have 
\[
e=\frac12(\d_x\sqrt{\rho})^2+\frac12\Lambda^2+f(\rho)=\frac12|\d_x\psi|^2+f(|\psi|^2),
\]
therefore it follows that
\begin{equation*}
\|\d_x\sqrt{e}\|_{L^\infty_tL^2_x}\leq\|\psi\|_{L^\infty_tH^2_x}\leq C(T, M_1, M_2),
\end{equation*}
by using the elementary inequality $\d_x|\d_x\psi|\leq|\d_{x}^2\psi|$ a.e. in $\R$.
To conclude the proof of Theorem \ref{thm:glob2} we have to show the entropy equality \eqref{eq:en_diff}. Indeed we have (by using a standard approximation process)
\begin{align*}
\d_te=&\frac12\d_t|\d_x\psi|^2+\d_tf(\rho)\\
=&\RE(\d_x\bar\psi\d_x\d_t\psi)+f'(\rho)\d_t\rho\\
=&\d_x\RE(\d_x\bar\psi\d_t\psi)-\RE(\d_x^2\bar\psi\d_t\psi)+f'(\rho)\d_t\rho.
\end{align*}
By using the continuity equation for $\rho$ and the NLS equation, the second term on the right hand side of the previous equality can be written as
\begin{align*}
\RE(\d_x^2\bar\psi\d_t\psi)=&\RE[\d_x^2\bar\psi(\frac{i}{2}\d_x^2\psi-if'(\rho)\psi)]\\
=&-f'(\rho)\IM(\bar\psi\d_x^2\psi)=-f'(\rho)\d_x J=f'(\rho)\d_t\rho.
\end{align*}
On the other hand we can decompose $\RE(\d_x\bar\psi\d_t\psi)$ as
\[
\RE(\d_x\bar\psi\d_t\psi)=\RE(\bar\phi\d_x\psi)\RE(\bar\phi\d_t\psi)+\IM(\bar\phi\d_x\psi)\IM(\bar\phi\d_t\psi),
\]
where $\phi\in P(\psi)$ is a polar factor,then by the polar factorization and since $\d_t\sqrt\rho=\RE(\bar\phi\d_t\psi)$ and $\lambda=-\IM(\bar\phi\d_t\psi)$, we obtain the entropy equality
\[
\d_te+\d_x(\Lambda\lambda-\d_x\sqrt\rho\d_t\sqrt\rho)=0.
\]
\end{proof}
\begin{rem}\label{rem:gen_press}
The proof of Theorems \ref{thm:glob} and \ref{thm:glob2} heavily relies on the wave function lifting results developed in this section, in order to define an initial datum $\psi_0$, which then we let evolve according to equation \eqref{eq:NLS_1d}. Thus we exploit the well-posedness result for the nonlinear Schr\"odinger equation \eqref{eq:NLS_1d}, already established in the literature, and we use the polar factorization of Lemma \ref{lemma:polar} in order to define the quantum state $(\sqrt{\rho}, \Lambda)$ and consequently the weak solution $(\rho, J)$. It is straightforward to see that the same procedure can be applied as long as, instead of the nonlinearity considered in the equation \eqref{eq:NLS_1d}, we have any other nonlinearity for which a global well-posedness result still holds. In particular, this implies that it is possible to consider more general pressure laws, even by relaxing the convexity assumption on the internal energy, namely the monitonicity assumption on the pressure.
\end{rem}

\section{Dispersive estimates}\label{sect:disp}
In this section we will collect some a priori estimates satisfied by finite energy weak solutions to $\eqref{eq:QHD_1d}$. If we restricted our analysis to Schr\"odinger-generated solutions — like the ones constructed in the proof of Theorem \ref{thm:glob} — then the dispersive estimates inherited by the NLS dynamics would lead to a wide range of information. It is not the case, however, for general solutions, where the quasi-linear nature of the system \eqref{eq:QHD_1d} prevents to successfully exploit semigroup techniques to infer suitable smoothing estimates.

Consequently beyond the standard conservation laws we need a new class of intrinsically hydrodynamic estimates.
Some of these estimates however will be the natural counterpart of the already existing estimates for the nonlinear Schr\"odinger equations, but we will prove them even for hydrodynamic solutions which are not Schr\"odinger-generated. The first one of such estimates will be  the hydrodynamical analogue of the bounds inferred by the pseudo-conformal energy for NLS solutions \cite{GV}, see also \cite{HNT1, HNT2}. In the NLS case we may consider a functional related to the $L^2$ norm of $(x+it\nabla)\psi$ (due to the Galilean invariance of the NLS). By exploiting the polar factorization we have
\begin{equation*}
\frac12\|(x+it\nabla)\psi(t)\|_{L^2}^2
=\int\frac{|x|^2}{2}\rho(t, x)\,dx-t\int x\cdot J(t, x)\,dx+\frac{t^2}{2}\int|\Lambda|^2+|\nabla\sqrt{\rho}|^2\,dx.
\end{equation*}
The right hand side of this identity suggests to introduce the following functional (already used for the NLS equation in \cite{HNT1}),
\begin{equation}\label{eq:pc_en}
H(t)=\int_{\R}\frac{x^2}{2}\rho(t, x)\,dx-t\int_{\R}x\,J(t, x)\,dx+t^2E(t),
\end{equation}
where the energy $E(t)$ is defined in \eqref{eq:en_QHD}. In the next Proposition we show that, by assuming a control on the initial momentum of inertia, the functional $H(t)$ will be bounded for any finite energy weak solutions
\begin{prop}\label{prop:pseudo}
Let $(\rho, J)$ be a weak solution to \eqref{eq:QHD_1d} with finite mass and energy as in the Definition \ref{def:FEWS} and \ref{def:FEWS_2}, such that the total energy $E(t)$ is non-increasing, and
\begin{equation}\label{eq:fin_var}
\int_{\R}x^2\rho_0(x)\,dx<\infty.
\end{equation}
Then we have
\begin{equation*}
H(t)+\int_0^ts\int_{\R}\rho f'(\rho)-3f(\rho)\,dxds\leq\int_{\R}\frac{x^2}{2}\rho_0(x)\,dx.
\end{equation*}
\end{prop}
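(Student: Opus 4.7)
The plan is to differentiate $H(t)$ in time, using the weak formulations of the continuity and momentum equations \eqref{eq:QHD_cty}--\eqref{eq:QHD_mom} together with the thermodynamic identity $p(\rho)=\rho f'(\rho)-f(\rho)$, and then invoke the monotonicity of $E(t)$. Formally, using $\partial_t\rho=-\partial_xJ$ and integration by parts,
\[
\frac{d}{dt}\int_\R \frac{x^2}{2}\rho\,dx=\int_\R xJ\,dx,
\]
while, using the momentum equation and noting that $\int \partial_x^2\rho\,dx=0$ at infinity,
\[
\frac{d}{dt}\int_\R xJ\,dx=\int_\R\bigl[\Lambda^2+(\partial_x\sqrt\rho)^2+p(\rho)\bigr]\,dx=2E(t)-2\int f(\rho)\,dx+\int p(\rho)\,dx.
\]
Substituting $p(\rho)=\rho f'(\rho)-f(\rho)$ and collecting terms in $\frac{d}{dt}H(t)$, the two copies of $\int xJ\,dx$ coming from the first and second summands in \eqref{eq:pc_en} cancel, the $2tE(t)$ coming from differentiating the $t^2E(t)$ factor cancels the $-t\cdot 2E(t)$ contribution from $\frac{d}{dt}\int xJ\,dx$, and what remains is
\[
\frac{d}{dt}H(t)=-t\int_\R\bigl(\rho f'(\rho)-3f(\rho)\bigr)\,dx+t^2\frac{d}{dt}E(t).
\]
Since $E(t)$ is non-increasing by assumption, the last term is non-positive, so integrating from $0$ to $t$ yields the claimed inequality with $H(0)=\int \frac{x^2}{2}\rho_0\,dx$.

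To make this rigorous in the class of finite energy weak solutions, I would test the weak formulations \eqref{eq:QHD_cty}--\eqref{eq:QHD_mom} against truncated versions of $x^2/2$ and $x$. Concretely, fix a cutoff $\chi\in C_c^\infty(\R)$ with $\chi\equiv 1$ on $[-1,1]$ and $0\le\chi\le 1$, and for $R>0$ set $\eta_R(x)=\tfrac{x^2}{2}\chi(x/R)$ and $\zeta_R(x)=x\chi(x/R)$. Combined with a smooth temporal cutoff $\tau_\eps(s)$ approximating $\mathbf{1}_{[0,t]}(s)$, these produce admissible test functions $\eta_R(x)\tau_\eps(s)$ and $\zeta_R(x)\tau_\eps(s)$ (the latter yields $\partial_x^3\zeta_R$ supported in $\{|x|\sim R\}$). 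Passing $\eps\to 0$ gives integrated-in-time identities for $\int \eta_R\rho$ and $\int\zeta_R J$; then the finite second moment hypothesis \eqref{eq:fin_var}, combined with $\rho\in L^\infty_tL^1$, $J=\sqrt\rho\Lambda\in L^\infty_tL^1$ (by Cauchy--Schwarz using finite mass and finite energy) and the energy bound on $\Lambda^2+(\partial_x\sqrt\rho)^2+f(\rho)$, allows one to send $R\to\infty$: the cutoff error terms $\int \frac{J}{R}\chi'(x/R)\,dx$, $\int p(\rho)\chi'(x/R)\,dx$, and the third-derivative error $\int \rho\,\partial_x^3\zeta_R\,dx=O(R^{-2})\int_{|x|\sim R}\rho\,dx$ all vanish in the limit.

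The one step that requires mild care is that $\int x^2\rho(t,x)\,dx$ must remain finite for a.e.\ $t$ so that $H(t)$ is well defined; this propagates along the computation because the identity for $\int \eta_R\rho$ expresses it as $\int\eta_R\rho_0+\int_0^t\int \zeta'_R J\,dxds$, and the right-hand side is bounded uniformly in $R$ by $\int x^2\rho_0/2+\int_0^t\|J(s)\|_{L^1}\bigl(1+\|x\rho(s)\|_{L^1}^{1/2}\bigr)ds$, so Fatou and Gronwall control the second moment a priori. The main obstacle in the argument is thus purely bookkeeping: carrying out the cutoff-in-$x$ limit simultaneously in all three terms composing $H(t)$, and verifying that the dispersive boundary term $\int\rho\,\partial_x^3\zeta_R\,dx$ (which is the only one not controlled by pointwise non-negativity) vanishes as $R\to\infty$. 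Once these limits are justified, the inequality $\frac{d}{dt}H(t)\le -t\int(\rho f'(\rho)-3f(\rho))\,dx$ holds in the sense of distributions on $(0,\infty)$, and integration in time gives the conclusion.
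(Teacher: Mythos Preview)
Your proposal is correct and follows essentially the same route as the paper: test the weak formulations against spatial truncations of $x^2/2$ and $x$ combined with a temporal cutoff, propagate finiteness of the second moment by a Gronwall argument, pass $R\to\infty$, and exploit the monotonicity of $E(t)$. Two minor refinements appear in the paper that you may want to adopt: it takes $\Phi_R(x)=\tfrac{x^2}{2}\chi_R^2(x)$ (with the cutoff \emph{squared}) so that $\Phi_R'J$ is bounded directly by $C\,V_R(t)^{1/2}$ with $V_R=\int\Phi_R\rho$, which closes the Gronwall loop cleanly (your bound ``$\|x\rho\|_{L^1}^{1/2}$'' should read $(\int x^2\rho)^{1/2}$); and rather than invoking $t^2\tfrac{d}{dt}E(t)\le 0$, the paper works with the integrated identities and uses the elementary inequality $2\int_0^t sE(s)\,ds\ge t^2E(t)$ for non-increasing $E$, which avoids any regularity issue with $E'$.
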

\begin{proof}
In order to prove the Proposition, we need to choose suitable test functions $\eta,\zeta\in C^\infty_c([0,T)\times\R)$ in the Definition 8 of weak solutions, which are obtained by introducing the following cut-off functions in space and time. For $R>0$, we defined the cut-off functions in space as
\[
\Phi_R(x)=\frac{x^2}{2}\chi_R^2(x),
\]
where $\chi_R(x)=\chi(x/R)\in C^\infty_c(\R)$, such that $\chi(x)\equiv 1$ for $|x|\leq 1$, and $\textrm{supp}\chi\subset\{|x|\leq 2\}$. On the other hand, let us consider convex functions $\{\tilde\chi_\eps\}_{\eps>0}\subset C^\infty(\R)$ such that
\[
\tilde\chi_\eps(s)=\begin{cases}
0 &,\ s<-\eps\\
s &,\ s>\eps
\end{cases}.
\]
it is clear to see that $\{\tilde\chi_\eps''\}$ is a sequence of mollifiers with $\|\tilde\chi_\eps''\|_{L^1}=1$, which approximates the Dirac-delta in the space of measure, and $\textrm{supp}\tilde\chi_\eps''\subset\{|s|\leq\eps\}$.

We first prove that the momentum of inertia increases at most quadratically in time under the initial assumption \eqref{eq:fin_var}, namely
\begin{equation}\label{eq:fin_var_2}
\int_\R x^2\rho(t,x)dx\leq C t^2+\int_\R x^2\rho_0(x)dx.
\end{equation}
For $\epsilon<t<T-\epsilon$, we choose $\eta(s,x)=\tilde\chi_\eps'(t-s)\Phi_R(x)$ in \eqref{eq:QHD_cty}, then we have
\begin{equation}\label{eq:prop22_3}
-\int_0^T\int_{\R}\tilde\chi_\eps''(t-s)\Phi_R(x)\rho+\tilde\chi_\eps'(t-s)\Phi_R'(x)J\,dxds+\int_{\R}\tilde\chi_\eps'(t)\Phi_R(x)\rho_0(x)\,dx=0.
\end{equation}
By our choice of $\tilde\chi_\eps$, we have $\tilde\chi_\eps'(t)=1$ for $t>\eps$. Furthermore $\{\tilde\chi_\eps''\}$ is a sequence of  mollifiers, and $\tilde\chi_\eps\to \mathbf{1}_{\{s>0\}}s$ a.e. $s\in\R$. Therefore by the property of mollifiers and dominated convergence theorem, by letting $\epsilon\to 0$ in \eqref{eq:prop22_3} we get
\begin{equation}\label{eq:prop22_4}
-\int_{\R}\Phi_R(x)\rho(t,x)dx+\int_0^t\int_{\R}\Phi_R'(x)J\,dxds+\int_{\R}\Phi_R(x)\rho_0(x)\,dx=0,
\end{equation}
for a.e. $t\in(0,T)$. Let us define 
\[
V_R(t)=\int_\R \Phi_R(x)\rho(t,x)dx,
\] 
then by the definition of $\phi_R$ and the finite energy, we have
\begin{align*}
\int_{\R}\Phi_R'(x)J\,dx=&\int_{\R}(x\,\chi^2_R+x^2\chi_R\chi'_R)(\sqrt\rho\Lambda)\,dx\\
\leq & \|\chi_R+x\chi'_R\|_{L^\infty(\R)}\|\Lambda\|_{L^2(\R)}\left(\int_\R x^2\chi_R^2(x)\rho\,dx\right)^\frac12\\
\leq & C \left(\int_\R \Phi_R(x)\rho\,dx\right)^\frac12=C\, V_R^\frac12(t).
\end{align*}
By substituting the last inequality into \eqref{eq:prop22_4} and using Gronwall's argument, we get
\[
V_R(t)\leq C t^2+V_R(0),
\]
which will imply \eqref{eq:fin_var_2} as $R\to\infty$ . As a direct consequence of \eqref{eq:fin_var_2}, for a.e. $0<t<T$ we have 
\[
\int_\R |x\,J(t,x)|dx\leq \|\Lambda(t)\|_{L^2(\R)}(\left(\int_\R x^2\rho(t)dx\right)^\frac12<\infty,
\]
and by the dominated convergence theorem
\begin{equation}\label{eq:conv_xJ}
\int_0^t\int_{\R}\Phi_R'(x)J\,dxds\to\int_0^t\int_{\R}x\,J\,dxds.
\end{equation}

Next we set $\zeta_1(s,x)=\tilde\chi_\eps(t-s)\phi_R'(x)$ and $\zeta_2(s,x)=\tilde\chi'_\eps(t-s)\phi_R'(x)$ in
 \eqref{eq:QHD_mom}. As before, our choice of $\tilde\chi_\eps$ allow us to let $\eps\to 0$, then the weak formulation of the momentum equation \eqref{eq:QHD_mom} implies
\begin{equation}\label{eq:prop22_6}
\begin{aligned}
-\int_0^t\int_{\R}\phi_R'(x)J+&(t-s)\phi_R''(x)(\Lambda^2+p(\rho)+(\d_x\sqrt{\rho})^2)\\
&-\frac{1}{4}(t-s)\phi_R^{(4)}(x)\rho\,dxds+\int_{\R}t\,\phi_R'(x)J_0(x)\,dx=0,
\end{aligned}
\end{equation}
and
\begin{equation}\label{eq:prop22_8}
\begin{aligned}
-\int_{\R}\phi_R'(x)J(t,x)dx+&\int_0^t\int_{\R}\phi_R''(x)(\Lambda^2+p(\rho)+(\d_x\sqrt{\rho})^2)\\
&-\frac{1}{4}\phi_R^{(4)}(x)\rho(t,x)\,dxds+\int_{\R}\phi_R'(x)J_0(x)\,dx=0.
\end{aligned}
\end{equation}

Now we compute $t\times\eqref{eq:prop22_8}-\eqref{eq:prop22_6}-\eqref{eq:prop22_4}$,
\begin{equation}\label{eq:prop22_9}
\begin{aligned}
\int_{\R}\phi_R(x)\rho(t,x)dx-t\int_{\R}\phi_R'(x)J(t,x)dx&+\int^t_0\int_\R s\,\phi_R''(x)(\Lambda^2+p(\rho)+(\d_x\sqrt{\rho})^2)\\
&-\frac{s}{4}\phi_R^{(4)}(x)\rho(t,x)\,dxds-\int_{\R}\phi_R(x)\rho_0(x)dx=0.
\end{aligned}
\end{equation}
Recalling the definition of $\phi_R$, we have
\[
\phi''_R(x)=\chi_R^2(x)+2x(\chi_R^2)'+\frac{x^2}{2}(\chi_R^2)'',
\]
and it is straightforward to check $\|\phi''_R(x)\|_{L^\infty(\R)}\leq C$ and $\phi''_R(x)\to 1$ a.e. on $\R$ as $R\to\infty$. There by the  dominated convergence theorem we obtain
\begin{align*}
\int^t_0\int_\R s\,\phi_R''(x)(\Lambda^2+p(\rho)+(\d_x\sqrt{\rho})^2)dxds=&\int^t_0\int_\R s(\Lambda^2+p(\rho)+(\d_x\sqrt{\rho})^2)dxds\\
=&\int^t_0 2s\,E(s)+\int_\R(p(\rho)-2f(\rho))dxds,
\end{align*}
where $E(s)$ is the total energy given by \eqref{eq:en_QHD}. Also as $R\to\infty$, we have $\phi_R^{(4)}(x)\sim \mathcal{O}(R^{-2})$, hence by letting $R\to\infty$, the identity \eqref{eq:prop22_9} implies 
\begin{align*}
\int_{\R}\frac{x^2}{2}\rho(t,x)dx-t\int_{\R}x\,J(t,x)dx+&2\int^t_0 s\,E(s)dx\\
&=\int_{\R}\frac{x^2}{2}\rho_0(x)dx+2\int_0^t s\int_\R 2f(\rho)-p(\rho)dxds.
\end{align*}
Since $E(s)$ is non-increasing, we have
\[
2\int^t_0 s\,E(s)dx\geq t^2E(t),
\]
and we can conclude
\begin{align*}
H(t)= & \int_{\R}\frac{x^2}{2}\rho(t,x)dx-t\int_{\R}x\,J(t,x)dx+t^2E(t)\\
\leq & \int_{\R}\frac{x^2}{2}\rho_0(x)dx+2\int_0^t s\int_\R 3f(\rho)-\rho\,f'(\rho)dxds,
\end{align*}
where we also use $p'(\rho)=\rho\,f'(\rho)-f(\rho)$.
\end{proof}

In the case under our consideration we have $f(\rho)=\frac1\gamma\rho^\gamma$ so that
\begin{equation}\label{eq:pc}
H(t)+\left(1-\frac3\gamma\right)\int_0^ts\int\rho^\gamma\,dx\lesssim\int\frac{x^2}{2}\rho_0(x)\,dx.
\end{equation}
Furthermore, the functional in \eqref{eq:pc_en} can also be expressed as
\begin{equation}\label{eq:pc_en_2}
H(t)=\int_{\R}\frac{t^2}{2}(\d_x\sqrt{\rho})^2+\frac{t^2}{2}(\Lambda-\frac{x}{t}\sqrt{\rho})^2+t^2f(\rho)\,dx.
\end{equation}
A similar identity to \eqref{eq:pc} also appeared in Appendix A.2 in \cite{CDS}. In what follows we exploit \eqref{eq:pc}, \eqref{eq:pc_en_2} in order to infer a dispersive type estimate for the solutions to \eqref{eq:QHD_1d}, similarly to what is done in \cite{Bar} to study the scattering properties in the framework of NLS equations.

As we will explain later in Remark \ref{rmk:r_disp}, in the next Proposition we show the convergence of the velocity field towards a rarefaction wave and, in the case the total energy is conserved, asymptotically it will reduce to the total kinetic energy.
\begin{prop}\label{prop:disp}
Let $(\rho, J)$ be a weak solution to the system \eqref{eq:QHD_1d} satisfying the same assumptions as in Proposition \ref{prop:pseudo}, and the internal energy is given by $f(\rho)=\frac{1}{\gamma}\rho^\gamma$. Then we have
\begin{equation}\label{eq:disp_pc}
H(t)\lesssim t^{2(1-\sigma)}+\int\frac{x^2}{2}\rho_0(x)\,dx,
\end{equation}
where $\sigma=\min\{1, \frac{1}{2}(\gamma-1)\}$. In particular we have
\begin{equation}\label{eq:disp}
\|\d_x\sqrt{\rho}(t)\|_{L^2(\R)}\lesssim t^{-\sigma},
\end{equation}
\begin{equation}\label{eq:disp_kin}
\int(\Lambda(t, x)-\frac{x}{t}\sqrt{\rho}(t, x))^2\,dx\lesssim t^{-2\sigma}.
\end{equation}
and 
\begin{equation}\label{eq:disp_pr}
\int\rho^\gamma\,dx\lesssim t^{-2\sigma}.
\end{equation}
Moreover, if the total energy is conserved, then we have
\begin{equation}\label{eq:ld}
\lim_{t\to\infty}\frac12\|\Lambda(t)\|_{L^2}^2=E(0).
\end{equation}
\end{prop}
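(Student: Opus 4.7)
My plan is to derive \eqref{eq:disp_pc} by bootstrapping the bound from Proposition~\ref{prop:pseudo}, and then read off the remaining estimates from the decomposition \eqref{eq:pc_en_2}. With $f(\rho)=\rho^\gamma/\gamma$, Proposition~\ref{prop:pseudo} reads
\[
H(t)+\frac{\gamma-3}{\gamma}\int_0^t s\int_\R\rho^\gamma\,dx\,ds \le A,\qquad A:=\int_\R\tfrac{x^2}{2}\rho_0\,dx,
\]
and I split cases according to the sign of $(\gamma-3)/\gamma$. If $\gamma\ge 3$ the integral on the left is non-negative and one gets $H(t)\le A$ immediately; since $\sigma=1$ here, $t^{2(1-\sigma)}=1$, and the claim is already of the stated form.

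For $1<\gamma<3$ (where $\sigma=(\gamma-1)/2$ and $2(1-\sigma)=3-\gamma$) the wrong-sign term must be absorbed. The key observation is the identity \eqref{eq:pc_en_2}, which since all three integrands are non-negative yields
\[
\|\rho(s)\|_{L^\gamma}^\gamma=\gamma\int_\R f(\rho)\,dx\le \frac{\gamma}{s^2} H(s).
\]
On the other hand, the assumption that $E(t)$ is non-increasing gives $\|\rho(s)\|_{L^\gamma}^\gamma\le\gamma E(0)$ uniformly in $s$. I would split the time integral as $\int_0^t=\int_0^1+\int_1^t$, use the energy bound on $[0,1]$ and the $H$-bound on $[1,t]$, to arrive at an inequality of the form
\[
H(t)\le A+C(\gamma,E(0))+(3-\gamma)\int_1^t\frac{H(s)}{s}\,ds,\qquad t\ge 1.
\]
Logarithmic Gronwall then yields $H(t)\le B\,t^{3-\gamma}$ with $B=B(A,\gamma,E(0))$ for $t\ge 1$. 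Combining with the trivial bound $H(t)\le A+C$ on $[0,1]$ produces \eqref{eq:disp_pc}.

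The pointwise decays \eqref{eq:disp}, \eqref{eq:disp_kin}, \eqref{eq:disp_pr} are then immediate from \eqref{eq:pc_en_2}: each of the three non-negative summands is bounded by $H(t)$, giving, for $t\ge 1$,
\[
\tfrac{t^2}{2}\|\d_x\sqrt\rho\|_{L^2}^2\le H(t),\quad \tfrac{t^2}{2}\|\Lambda-\tfrac{x}{t}\sqrt\rho\|_{L^2}^2\le H(t),\quad \tfrac{t^2}{\gamma}\|\rho\|_{L^\gamma}^\gamma\le H(t),
\]
and dividing by $t^2$ produces the claimed $t^{-2\sigma}$ rate (for small $t$ the rates hold trivially by the energy bound, up to enlarging the implicit constant). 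Finally, under conservation of energy, writing
\[
E(0)=\tfrac12\|\d_x\sqrt\rho(t)\|_{L^2}^2+\tfrac12\|\Lambda(t)\|_{L^2}^2+\tfrac1\gamma\|\rho(t)\|_{L^\gamma}^\gamma,
\]
the first and third terms tend to $0$ as $t\to\infty$ by \eqref{eq:disp} and \eqref{eq:disp_pr}, so $\tfrac12\|\Lambda(t)\|_{L^2}^2\to E(0)$, which is \eqref{eq:ld}.

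The one genuinely delicate step is the bootstrap in the sub-cubic case: the naive energy bound on $\|\rho\|_{L^\gamma}^\gamma$ only yields $H(t)\lesssim t^2$, which is the trivial rate, so the improvement hinges on trading it for the better bound $\|\rho\|_{L^\gamma}^\gamma\lesssim H(t)/t^2$ coming from \eqref{eq:pc_en_2} on the tail $[1,t]$. It is precisely the resulting logarithmic Gronwall that dictates the exponent $3-\gamma=2(1-\sigma)$; this matching is what fixes the value of $\sigma$ in the regime $\gamma<3$.
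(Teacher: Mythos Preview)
Your proof is correct and follows essentially the same route as the paper. The only cosmetic difference is that the paper applies the Gronwall step to the auxiliary quantity $F(t)=\tfrac{t^2}{\gamma}\int\rho^\gamma\,dx$ (using $F(t)\le H(t)$ from \eqref{eq:pc_en_2}) and then feeds the resulting decay $\int\rho^\gamma\lesssim t^{1-\gamma}$ back into \eqref{eq:pc} to bound $H(t)$, whereas you run Gronwall directly on $H$; both produce the same exponent $3-\gamma=2(1-\sigma)$, and the remaining conclusions are read off from \eqref{eq:pc_en_2} and the energy identity exactly as you do.
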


\begin{proof}
Let us consider the identity \eqref{eq:pc}, hence for $\gamma>3$ we have
\begin{equation*}
H(t)\leq\int\frac{x^2}{2}\rho_0(x)\,dx
\end{equation*}
and \eqref{eq:disp_pc} holds for $\sigma=1$.
Let us now consider the case $1<\gamma\leq3$, if we define
\begin{equation*}
F(t)=\frac{t^2}{\gamma}\int\rho^\gamma(t, x)\,dx
\end{equation*}
then by \eqref{eq:pc} and \eqref{eq:pc_en_2} we have
\begin{equation*}
F(t)\leq H(t)=(3-\gamma)\int_0^t\frac1s F(s)\,ds+\int_\R\frac{|x|^2}{2}\rho_0(x)\,dx.
\end{equation*}
By Gronwall we then have
\begin{equation*}
F(t)\lesssim t^{3-\gamma}F(1)+\int\frac{x^2}{2}\rho_0(x)\,dx,
\end{equation*}
which also implies 
\begin{equation*}
\int\rho^\gamma(t, x)\,dx\lesssim t^{1-\gamma}.
\end{equation*}
We can now plug the above estimate in \eqref{eq:pc} in order to obtain
\begin{equation*}
H(t)\lesssim\int_0^ts^{2-\gamma}\,ds+\int\frac{x^2}{2}\rho_0(x)\,dx,
\end{equation*}
which then implies \eqref{eq:disp_pc}.
\end{proof}

\begin{rem}\label{rmk:r_disp}
The dispersive estimates in Proposition \ref{prop:disp} have several consequences. First of all the mass density converges to zero and formally the velocity field $v$ asymptotically approaches a rarefaction wave, namely 
$v(t, x)\sim x/t$ as $t\to\infty$.
Moreover, as we have seen, asymptotically the motion becomes inertial and the total energy is given by the kinetic one. Intuitively this suggests that all the particles in the quantum gas tend to "synchronise" with a rarefaction wave in way that is reminiscent of the Landau damping \cite{MV}. Another important consequence which comes from a trivial application to the Sobolev embedding and the estimate \eqref{eq:disp} is given by
\begin{equation}\label{eq:disp_free}
\|\sqrt{\rho}(t)\|_{L^\infty}\lesssim t^{-\sigma/2}.
\end{equation}
When $\gamma\geq3$ this bound is analogous to the classical  dispersive estimate for the linear Schr\"odinger equation.
\end{rem}

Similar estimates appear in many contexts of evolution PDEs, see \cite{Straus, Gl, Sid, Mor}. In particular estimates like \eqref{eq:disp_pr} have been considered in classical compressible fluid dynamics \cite{Chem, Ser, Sid}, or the analogue of \eqref{eq:disp_kin} in kinetic theory, see for instance \cite{BD, Pe, IR}. These results are also somehow reminiscent of the vector field method \cite{Kl} (see also \cite{John}) used to study nonlinear wave equations, see also the recent paper \cite{FJS}.
We also mention \cite{Roz} where the author exploits the hydrodynamic formulation of nonlinear Schr\"odinger equation in order to study the formation of singularities for the mass-critical NLS. 
Moreover, for isothermal capillary fluids, namely when the pressure term is linear, or equivalently when we consider a Schr\"odinger equation with a logarithmic nonlinearity, the decay rate shows a logarithmic improvement, see \cite{CG, CCH}.
In the case of nonlinear Schr\"odinger equations, the fact that solutions to the nonlinear problem disperse as much as the linear solutions provides some additional information about their asymptotic behavior, see \cite{GV}. An alternative proof, based on the interaction Morawetz estimates \cite{Mor, LS}, has been recently developed to show the asymptotic completeness for the mass-supercritical, energy-subcritical NLS equations \cite{CKSTT, CGT, GVQ, PV}. 

The purpose of the next Proposition is to show that this kind of estimates actually hold also for arbitrary finite energy weak solutions to the QHD system \eqref{eq:QHD_1d}. 
Moreover, differently from the dispersive estimates in Proposition \ref{prop:disp} where it was necessary to assume a finite initial moment of inertia \eqref{eq:fin_var} and a non-increasing energy $E(t)$, in the next Proposition we only require the weak solutions to have finite energy.

The main idea of the next Proposition is inspired from the strategy of proof of the Morawetz estimates for nonlinear Schr\"odinger equations \cite{GVQ}, namely by using a convolution with a Riesz kernel associated to fractional powers of the inverse Laplacian. In the one dimensional case this analogy leads us to replace $(-\Delta)^{-1/2}$ with the anti-derivative.
\begin{thm}[Morawetz-type estimates]\label{thm:mor}
Let $(\rho, J)$ be a weak solution with finite mass and energy as in the Definition \ref{def:FEWS} and \ref{def:FEWS_2} satisfying
\[
M(t)+E(t)\leq M_1,
\]
then we have
\begin{equation}\label{eq:mor_hydro}
\|\d_x\rho\|_{L^2(\R_t\times\R_x)}^2+\|\rho\|_{L^{\gamma+1}(\R_t\times\R_x)}^{\gamma+1}\leq M_1^2.
\end{equation}
\end{thm}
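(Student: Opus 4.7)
The plan is to prove this by means of an interaction-type Morawetz functional built from the anti-derivative (the 1D analogue of $(-\Delta)^{-1/2}$). Specifically, I would introduce
\[
I(t)=\int_\R\int_\R \sgn(x-y)\,J(t,x)\,\rho(t,y)\,dx\,dy=\int_\R J(t,x)W(t,x)\,dx,
\]
where $W(t,x):=\int_\R\sgn(x-y)\rho(t,y)\,dy$ satisfies $\d_xW=2\rho$ and, thanks to the continuity equation, $\d_tW=-2J$. The first step is to bound $I(t)$ uniformly: since $\|\sgn\|_\infty\le1$, Cauchy–Schwarz together with $J=\sqrt\rho\,\Lambda$ gives
\[
|I(t)|\le\|J(t)\|_{L^1}\|\rho(t)\|_{L^1}\le\|\sqrt\rho\|_{L^2}\|\Lambda\|_{L^2}\cdot M(t)\lesssim M(t)\sqrt{M(t)E(t)}\lesssim M_1^2.
\]

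The second step is the formal computation of $dI/dt$. Writing the momentum equation as $\d_tJ=-\d_x[\Lambda^2+p(\rho)+(\d_x\sqrt\rho)^2-\tfrac14\d_x^2\rho]$ and using $\d_tW=-2J$, one obtains
\[
\frac{dI}{dt}=\int\d_tJ\cdot W\,dx-2\int J^2\,dx=2\int\bigl[\Lambda^2+p(\rho)+(\d_x\sqrt\rho)^2-\tfrac14\d_x^2\rho\bigr]\rho\,dx-2\int J^2\,dx,
\]
after integration by parts using $\d_xW=2\rho$. Now $J^2=\rho\Lambda^2$, so the kinetic terms cancel. A further integration by parts yields $-\tfrac14\int\rho\,\d_x^2\rho\,dx=\tfrac14\int(\d_x\rho)^2\,dx=\int\rho(\d_x\sqrt\rho)^2\,dx$, and with $\rho p(\rho)=\frac{\gamma-1}{\gamma}\rho^{\gamma+1}$ we arrive at the positive-definite identity
\[
\frac{dI}{dt}=\int(\d_x\rho)^2\,dx+\frac{2(\gamma-1)}{\gamma}\int\rho^{\gamma+1}\,dx.
\]
Integrating in time on any interval $[0,T]$ and invoking the uniform bound from step one gives the Morawetz estimate $\|\d_x\rho\|_{L^2_{t,x}}^2+\|\rho\|_{L^{\gamma+1}_{t,x}}^{\gamma+1}\le 2\sup_t|I(t)|\lesssim M_1^2$, which is the claim.

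The principal obstacle is the rigorous justification of the identity for \emph{weak} solutions, for which $\d_x^2\rho$ and $\d_tJ$ are only distributional. I would address this by approximation: replace $\sgn(x-y)$ by a smooth monotone function $\chi_\delta$ with $\chi_\delta'\to2\delta_0$ as $\delta\to0$, introduce a spatial cut-off $\phi_R(x)\phi_R(y)$ and a time cut-off $\tilde\chi_\eps(t)$ as in the proof of Proposition~\ref{prop:pseudo}, plug the resulting test functions (tensor products) into the weak formulations \eqref{eq:QHD_cty}–\eqref{eq:QHD_mom}, and let $\eps,\delta\to0$ and $R\to\infty$ successively. The only delicate passages are the convergence of the nonlinear terms involving $\Lambda^2$, $\rho^\gamma$, $(\d_x\sqrt\rho)^2$ on the diagonal $x=y$ (once $\chi_\delta'$ concentrates), for which the finite-energy bound $\|\sqrt\rho\|_{H^1}+\|\Lambda\|_{L^2}\lesssim M_1$ yields the local integrability needed, and the vanishing of spatial boundary terms at $R\to\infty$, handled by the $L^1$ decay of $\rho$ and $J$ inherited from finite mass and energy.
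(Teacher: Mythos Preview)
Your argument is correct and is essentially the same interaction--Morawetz/anti-derivative approach as the paper's: the paper pairs $\rho$ with the anti-derivative $G(t,x)=\int_{-\infty}^x J\,ds$ and differentiates $\int\rho\,G\,dx$, whereas you pair $J$ with the anti-derivative $W$ of $\rho$ and differentiate $\int J\,W\,dx$; the two functionals differ only by a sign, a factor of $2$, and the conserved quantity $M\cdot P$, so the computations and final estimate coincide. Your discussion of the regularization needed for weak solutions is in fact more explicit than the paper's own (formal) presentation.
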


\begin{proof}
Let us first define the anti-derivative of $J$ as 
\begin{equation}\label{eq:def_G}
G(t,x)=\int_{-\infty}^x J(t,s)ds.
\end{equation}
From our assumptions we have
\[
\|J\|_{L^\infty_tL^1_x}\leq \|\sqrt\rho\|_{L^\infty_tL^2_x}\|\Lambda\|_{L^\infty_tL^2_x}\leq M(t)^\frac12 E(t)^\frac12\leq M_1,
\]
hence it follows that $G\in L^\infty([0, T]\times\R)$ with the bound $\|G\|_{L^\infty_{t,x}}\leq M_1$. By integrating the equation for the momentum density $J$ in \eqref{eq:QHD_1d}, we get
\begin{equation}\label{eq:eqG}
\d_tG+\Lambda^2+(\d_x\sqrt\rho)^2+p(\rho)=\frac14\d_x^2\rho.
\end{equation}
Let us now consider the functional 
\begin{equation*}
\int_\R \rho(t,x)G(t,x)dx,
\end{equation*}
which is the analogue of the interaction Morawetz functional used for NLS equations. By differentiating with respect to time, we obtain 
\begin{align*}
\frac{d}{dt}\int_\R \rho(t,x)G(t,x)dx=&\int_\R G\,\d_t\rho\,dx+\int_\R \rho\,\d_tG\,dx\\
=&-\int_\R G\,\d_xJ\,dx-\int_\R \rho[\Lambda^2+(\d_x\sqrt\rho)^2+p(\rho)-\frac14\d_x^2\rho]dx\\
=&-\int_\R \rho\,p(\rho)dx-\frac12\int_\R(\d_x\rho)^2dx.
\end{align*}
By integrating in time the last identity, since $p(\rho)=(1-\frac1\gamma)\rho^\gamma$, we have
\begin{align*}
(1-\frac{1}{\gamma})\int_0^T\int_\R \rho^{\gamma+1}dxdt+\frac12\int_0^T\int_\R(\d_x\rho)^2dxdt=&\int_\R \rho(0,x)G(0,x)dx-\int_\R \rho(T,x)G(T,x)dx\\
\le&2\|\rho\|_{L^\infty_t L^1_x}\|G\|_{L^\infty_{t,x}}\le M_1^2.
\end{align*}
\end{proof}

\begin{rem}
The dispersive estimates stated in Proposition \ref{prop:disp} and Theorem \ref{thm:mor} hold, with suitable modifications, also in the multi-dimensional setting. Actually they are true even for solutions to Euler-Korteweg systems of the following type, see Proposition 35 in \cite{AMZ2},
\begin{equation}\label{eq:EK}
\left\{\begin{aligned}
&\d_t\rho+\diver J=0\\
&\d_tJ+\diver\left(\frac{J\otimes J}{\rho}\right)+\nabla p(\rho)=\rho\nabla\left(\diver(\kappa(\rho)\nabla\rho)-\frac12\kappa'(\rho)|\nabla\rho|^2\right).
\end{aligned}\right.
\end{equation}
By defining $K(\rho)=\int_0^\rho s\,\kappa(s)\,ds$, then the analogue of the Morawetz-type estimates in \eqref{eq:mor_hydro} for solutions to \eqref{eq:EK} is given by
\begin{equation*}
\||\nabla|^{\frac{3-d}{2}}\sqrt{\rho K(\rho)}\|_{L^2(\R_t\times\R^d_x)}^2
+\||\nabla|^{\frac{1-d}{2}}\sqrt{\rho p(\rho)}\|_{L^2(\R_t\times\R^d_x)}^2\lesssim M_1^2.
\end{equation*}
\end{rem}

\section{''Entropy and Morawetz type'' estimates}\label{sect:apri}
In this section we are going to discuss additional a priori estimates for solutions to \eqref{eq:QHD_1d}. In particular the estimates in  Proposition \ref{prop:unif0} will be exploited later to show the stability result of Theorem \ref{thm:stab}. As we will see, they are consequence of the bounds determining the GCP class given by the Definition \ref{def:cptsln} and the weak entropy inequality stated in the Definition \ref{def:entrsln}. The uniform bounds provided by the functional $I(t)$ defined in \eqref{eq:higher} will play a fundamental role. In particular in Proposition \ref{prop:higher} below we show that $I(t)$ can be controlled on compact time intervals. First of all in the case of smooth solutions with $\rho>0$, we will be able to show the validity of the conservation law \eqref{eq:en_diff} regarding the energy identity.
\begin{lem}\label{lem:en}
Let $(\rho, J)$ be a smooth solution to \eqref{eq:QHD_1d} such that $\rho>0$. Then the energy density $e$ satisfies the following conservation law
\begin{equation}\label{eq:en_cons}
\d_te+\d_x(\Lambda\lambda-\d_t\sqrt{\rho}\d_x\sqrt{\rho})=0.
\end{equation}
\end{lem}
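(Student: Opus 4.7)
The plan is to compute $\d_t e$ directly, exploiting the fact that smoothness together with $\rho>0$ legitimises every algebraic manipulation. Since $\rho>0$, the velocity $v=J/\rho$ is well defined and $\Lambda=\sqrt{\rho}\,v$, so the energy density reads $e=\tfrac12(\d_x\sqrt{\rho})^2+\tfrac12\rho v^2+f(\rho)$. First I would derive the Bernoulli-type equation
\[
\d_t v+\d_x\mu=0
\]
by combining the momentum equation in \eqref{eq:QHD} with the continuity equation and the identity $p'(\rho)=\rho f''(\rho)$; here $\mu$ is precisely the chemical potential of \eqref{eq:chem}.

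Next I would differentiate the kinetic and internal parts: using $\d_t\rho=-\d_xJ$ and the Bernoulli equation above, I would get
\[
\d_t\bigl(\tfrac12\rho v^2+f(\rho)\bigr)=-\bigl(\tfrac12 v^2+f'(\rho)\bigr)\d_x J-\rho v\,\d_x\mu.
\]
The key rewriting is $\tfrac12 v^2+f'(\rho)=\mu+\tfrac12\frac{\d_x^2\sqrt{\rho}}{\sqrt{\rho}}$, which lets me recognise the first two summands as $-\d_x(\mu J)-\tfrac12\frac{\d_x^2\sqrt{\rho}}{\sqrt{\rho}}\d_x J$; then the algebraic identity $\mu J=(\sqrt{\rho}\mu)(\sqrt{\rho}v)=\lambda\Lambda$ converts $\d_x(\mu J)$ into the desired flux $\d_x(\lambda\Lambda)$.

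For the Bohm (gradient) part I would use $\d_t\sqrt{\rho}=-\d_x J/(2\sqrt{\rho})$ together with the Leibniz identity $\d_x\sqrt{\rho}\,\d_x(\d_t\sqrt{\rho})=\d_x(\d_x\sqrt{\rho}\,\d_t\sqrt{\rho})-\d_x^2\sqrt{\rho}\,\d_t\sqrt{\rho}$ to obtain
\[
\d_t\bigl(\tfrac12(\d_x\sqrt{\rho})^2\bigr)=\d_x\bigl(\d_x\sqrt{\rho}\,\d_t\sqrt{\rho}\bigr)+\tfrac12\frac{\d_x^2\sqrt{\rho}}{\sqrt{\rho}}\d_x J.
\]
Summing the two contributions, the cross term $\tfrac12\frac{\d_x^2\sqrt{\rho}}{\sqrt{\rho}}\d_x J$ cancels exactly against its counterpart from the previous step, and what remains is $\d_t e=\d_x(\d_t\sqrt{\rho}\,\d_x\sqrt{\rho})-\d_x(\Lambda\lambda)$, i.e.\ \eqref{eq:en_cons}. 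There is no real analytical obstacle in this regime—smoothness and positivity make every division and differentiation legitimate—so the only thing to get right is the bookkeeping around the Bohm cross-term cancellation, which is the algebraic heart of the identity and the reason why $\lambda$ (rather than $\mu$) appears in the correct conservative form of the flux.
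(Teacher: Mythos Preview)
Your proof is correct and follows essentially the same route as the paper: both derive the Bernoulli relation $\d_t v+\d_x\mu=0$, apply the Leibniz rule to $\d_x\sqrt{\rho}\,\d_t\d_x\sqrt{\rho}$, and recognize the chemical potential $\mu$ in the combination $\tfrac12 v^2+f'(\rho)-\tfrac12\d_x^2\sqrt{\rho}/\sqrt{\rho}$ so that $\d_t e$ collapses to $\d_x(\d_t\sqrt{\rho}\,\d_x\sqrt{\rho}-\mu J)$ with $\mu J=\lambda\Lambda$. The only difference is organizational: the paper absorbs the Bohm term $-\d_x^2\sqrt{\rho}\,\d_t\sqrt{\rho}$ directly into the coefficient of $\d_t\rho$ to form $\mu\,\d_t\rho$ in one step, whereas you split $e$ into gradient and kinetic/internal parts and exhibit the cross-term cancellation explicitly.
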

\begin{proof}
Because of positivity and smoothness, we can rewrite the system  \eqref{eq:QHD_1d} as
\begin{equation*}
\left\{\begin{aligned}
&\d_t\rho+\d_x(\rho v)=0\\
&\rho\d_tv+\rho v\d_xv+\rho\d_xf'(\rho)
=\frac12\rho\d_x\left(\frac{\d_x^2\sqrt{\rho}}{\sqrt{\rho}}\right),
\end{aligned}\right.
\end{equation*}
therefore the equation for the momentum density can be equvalently written as
\begin{equation*}
\rho\d_tv+\rho\d_x\mu=0,
\end{equation*}
where $\mu$ denotes the chemical potential defined in \eqref{eq:chem}.
Similarly, the energy density satisfies
\begin{equation*}
e=\frac12(\d_x\sqrt{\rho})^2+\frac12\rho v^2+f(\rho),
\end{equation*}
and hence
\begin{equation*}
\begin{aligned}
\d_te=&\d_x\sqrt{\rho}\d_{tx}\sqrt{\rho}+\left(\frac12v^2+f'(\rho)\right)\d_t\rho+\rho v\d_tv\\
=&\d_x\left(\d_x\sqrt{\rho}\d_t\sqrt{\rho}\right)
+\left(-\frac12\frac{\d_x^2\sqrt{\rho}}{\sqrt{\rho}}+\frac12v^2+f'(\rho)\right)\d_t\rho+\rho v\d_tv.
\end{aligned}
\end{equation*}
By using the previous identities and the definition \eqref{eq:chem}, it follows that
\begin{equation*}\begin{aligned}
\d_te=&\d_x\left(\d_x\sqrt{\rho}\d_t\sqrt{\rho}\right)-\mu\d_x(\rho v)-\rho v\d_x\mu\\
=&\d_x\left(\d_x\sqrt{\rho}\d_t\sqrt{\rho}-\rho v\mu\right).
\end{aligned}
\end{equation*}
\end{proof}

\begin{rem}
The calculation in the previous Lemma requires smoothness and positivity of solutions in order to be rigorously justified; however it is an interesting question to see whether this conservation law, or its weaker version with the inequality \eqref{eq:entr_ineq}, holds for a larger class of solutions. We mention the paper \cite{TzavFasc} where the authors determine some conditions on the velocity field and the mass density which allow to show the conservation of energy.
Let us remark that also the solutions constructed in Theorem \ref{thm:glob2} satisfy the identity \eqref{eq:en_cons} in the distributional sense.
\end{rem}

By using the conservation law \eqref{eq:en_cons} we now show that the functional $I(t)$ is uniformly bounded on compact time intervals.

\begin{prop}\label{prop:higher}
Let $(\rho, J)$ be a smooth solution to \eqref{eq:QHD_1d} with finite mass and finite energy, such that $\rho>0$. Then for any $0<T<\infty$  we have
\begin{equation*}
\sup_{t\in[0, T]}I(t)\leq C(T)I(0).
\end{equation*}
\end{prop}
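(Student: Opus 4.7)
The plan is to reduce to the NLS level via the wave function lifting, which is available here because $\rho>0$ and the solution is smooth. Specifically, setting $v=J/\rho$, $S(t,x)=\int_0^x v(t,y)\,dy$, and $\psi=\sqrt{\rho}\,e^{iS}$, the Madelung calculation gives that $\psi$ solves the NLS equation \eqref{eq:NLS_1d}, and the Bernoulli-type equation $\partial_t S=-\mu$ yields
\[
\bar\phi\,\partial_t\psi=\partial_t\sqrt{\rho}-i\lambda,\qquad \phi=e^{iS}\in P(\psi),
\]
so that, by the pointwise identity $|\partial_t\psi|^2=(\partial_t\sqrt{\rho})^2+\lambda^2$,
\[
I(t)=\int_{\R}|\partial_t\psi(t,x)|^2\,dx.
\]
Thus it suffices to prove the Gronwall-type bound $\|\partial_t\psi(t)\|_{L^2}^2\le C(T)\|\partial_t\psi(0)\|_{L^2}^2$ for the NLS flow.

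The next step is a standard linearized energy estimate. Differentiating \eqref{eq:NLS_1d} in time and setting $u=\partial_t\psi$ yields
\[
i\partial_t u=-\tfrac12\partial_x^2u+\gamma|\psi|^{2(\gamma-1)}u+(\gamma-1)|\psi|^{2(\gamma-2)}\psi^2\bar u.
\]
Multiplying by $\bar u$, integrating over $\R$ and taking imaginary parts, the Laplacian and the cubic-like real term $\gamma|\psi|^{2(\gamma-1)}|u|^2$ drop out, leaving
\[
\tfrac12\tfrac{d}{dt}\|u\|_{L^2}^2=(\gamma-1)\,\IM\int_{\R}|\psi|^{2(\gamma-2)}\psi^2\bar u^2\,dx.
\]
Since $||\psi|^{2(\gamma-2)}\psi^2\bar u^2|=|\psi|^{2(\gamma-1)}|u|^2$ pointwise, we estimate
\[
\bigl|\tfrac{d}{dt}\|u\|_{L^2}^2\bigr|\le 2(\gamma-1)\|\psi\|_{L^\infty}^{2(\gamma-1)}\|u\|_{L^2}^2.
\]

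To close the estimate, observe that by conservation of mass and energy for the smooth solution and by the polar-factorization identity \eqref{eq:quad},
\[
\|\psi(t)\|_{H^1}^2=M(t)+\|\partial_x\sqrt{\rho}(t)\|_{L^2}^2+\|\Lambda(t)\|_{L^2}^2\le M(0)+2E(0),
\]
so the one-dimensional Sobolev embedding $H^1(\R)\hookrightarrow L^\infty(\R)$ gives a time-independent bound $\|\psi\|_{L^\infty}\le C(M(0),E(0))$. Plugging this into the differential inequality above and applying Gronwall on $[0,T]$ yields
\[
\|\partial_t\psi(t)\|_{L^2}^2\le e^{C(M(0),E(0))\,T}\|\partial_t\psi(0)\|_{L^2}^2,
\]
which is exactly $I(t)\le C(T)I(0)$.

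The only delicate point is the pointwise bound on the cross term $(\gamma-1)|\psi|^{2(\gamma-2)}\psi^2\bar u^2$ when $1<\gamma<2$, since $|\psi|^{2(\gamma-2)}$ is then a negative power; however, the factor $\psi^2$ exactly cancels the singularity in modulus, giving $|\psi|^{2(\gamma-1)}|u|^2$, and positivity and smoothness of $\rho$ ensure the integrand is classical and the integration by parts (on the Laplacian term) is justified without boundary terms, since the decay of $\psi$ and $\partial_x\psi$ at infinity is inherited from the finite-energy assumption. No further technical obstacle arises.
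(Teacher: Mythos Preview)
Your argument is correct and leads to the same Gronwall inequality as the paper, but the route is genuinely different. The paper stays entirely at the hydrodynamic level: it writes $I(t)=\tfrac12\int\rho(\mu^2+\sigma^2)\,dx$ with $\sigma=\partial_t\log\sqrt{\rho}$, derives transport-type equations for $\mu$ and $\sigma$ directly from \eqref{eq:QHD_1d} (see \eqref{eq:mu_evol}--\eqref{eq:sigma_evol}), and after cancellations finds $\tfrac{d}{dt}I(t)=\int\mu\,\partial_t p(\rho)\,dx=2\int p'(\rho)\,\partial_t\sqrt{\rho}\,\lambda\,dx$, from which the bound $\lesssim\|\sqrt{\rho}\|_{L^\infty}^{2(\gamma-1)}I(t)$ follows. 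You instead lift to the Schr\"odinger picture and run the standard $L^2$ energy estimate on the linearized NLS for $u=\partial_t\psi$. The paper's computation is intrinsically hydrodynamic, in line with the philosophy announced in the title; your approach is arguably more direct for readers coming from dispersive PDE. The paper itself acknowledges the Schr\"odinger-side alternative in Remark~\ref{rmk:higher_schr}.

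One small correction is needed in your construction of $\psi$. With $S(t,x)=\int_0^x v(t,y)\,dy$ one only gets $\partial_tS=-\mu+\mu(t,0)$ (integrate $\partial_t v=-\partial_x\mu$ from $0$ to $x$), so $\psi=\sqrt{\rho}\,e^{iS}$ solves NLS only up to a time-dependent gauge term $-c(t)\psi$, $c(t)=\mu(t,0)$, and then $\bar\phi\,\partial_t\psi=\partial_t\sqrt{\rho}-i(\lambda-c(t)\sqrt{\rho})$ rather than $\partial_t\sqrt{\rho}-i\lambda$. The fix is trivial: take $S(t,x)=\int_0^x v(t,y)\,dy-\int_0^t\mu(s,0)\,ds$, which restores $\partial_tS=-\mu$ exactly, makes $\psi$ an honest solution of \eqref{eq:NLS_1d}, and gives $I(t)=\|\partial_t\psi(t)\|_{L^2}^2$ on the nose. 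After this adjustment your proof goes through verbatim.
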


\begin{proof}
Because of positivity and smoothness of solutions to \eqref{eq:QHD_1d}, we can rewrite the functional $I(t)$ in the following way,
\begin{equation*}
I(t)=\frac12\int_\R\rho(\mu^2+\sigma^2)\,dx,
\end{equation*}
where $\mu$ is the chemical potential and $\sigma=\d_t\log\sqrt{\rho}$. 
Now, by using the identity
\begin{equation*}
\rho\mu=-\frac14\d_x^2\rho+e+p(\rho)
\end{equation*}
we have
\begin{equation*}
\begin{aligned}
\d_t(\rho\mu)=&\d_t\left(e-\frac14\d_x^2\rho+p(\rho)\right)\\
=&\d_x(\d_x\sqrt{\rho}\d_t\sqrt{\rho}-\rho v\mu)-\frac14\d_{t}\d_x^2\rho+\d_tp(\rho).
\end{aligned}
\end{equation*}
Again by using the continuity equation we have
\begin{equation}\label{eq:mu_evol}
\rho\d_t\mu+\rho v\d_x\mu=\d_x(\d_x\sqrt{\rho}\d_t\sqrt{\rho})-\frac14\d_{t}\d_x^2\rho+\d_tp(\rho).
\end{equation}
Now we are going to derive the equation for $\sigma$. By using the continuity equation, the equation for $\log\sqrt{\rho}$ is given by
\begin{equation*}
\d_t\log\sqrt{\rho}+v\d_x\log\sqrt{\rho}+\frac12\d_xv=0,
\end{equation*}
then by using the identity $\d_tv=-\d_x\mu$, we obtain
\begin{equation*}
\d_t\sigma+v\d_x\sigma-\d_x\mu\d_x\log\sqrt{\rho}-\frac12\d_x^2\mu=0,
\end{equation*}
hence multiplying it by $\rho$ we get
\begin{equation}\label{eq:sigma_evol}
\rho\d_t\sigma+\rho v\d_x\sigma=\frac12\d_x(\rho\d_x\mu).
\end{equation}
Then by using \eqref{eq:mu_evol} and \eqref{eq:sigma_evol}, it follows that
\begin{align*}
\frac{d}{dt}I(t)=\int\mu\d_tp(\rho)\,dx=&2\int p'(\rho)\d_t\sqrt{\rho}\lambda\,dx\\
\lesssim&\|\sqrt{\rho}(t)\|_{L^\infty}^{\gamma-1}I(t)\\
\lesssim & \|\sqrt{\rho}(t)\|_{H^1_x}^{\gamma-1}I(t)\leq [M(t)+E(t)]^{\frac{\gamma-1}{2}}I(t).
\end{align*}
Then Gronwall's inequality gives the desired result.
\end{proof}

\begin{rem}\label{rmk:higher_schr}
As for Lemma \ref{lem:en}, the same result can be obtained also for the solutions constructed in Theorem \ref{thm:glob2}. Indeed, since they are Schr\"odinger-generated, we have
\begin{equation*}
\frac{d}{dt}I(t)=-\int\d_tf'(\rho)\IM(\bar\psi\d_t\psi)\,dx=\int\d_tf'(\rho)\sqrt{\rho}\lambda\,dx.
\end{equation*}
This fact, together with the stability result proved in the next section, suggests that there are different ways to approximate the system \eqref{eq:QHD_1d} and to obtain different classes of weak solutions.
\\
In addition, let us remark that similar functional are also used in the context of the Schrödinger equation to study the growth of Sobolev norms \cite{PTV}.
\end{rem}

In the remaining part of this section we show that by using an approach similar to the one we used before to prove the Morawetz estimate in Theorem \ref{thm:mor} and by assuming that $(\rho,J)$ satsfies the weak entropy inequality as in the Definition \ref{def:entrsln}, it is possible to infer an improved space-time bound for the energy density.

\begin{prop}\label{prop:unif0}
Let $(\rho,J)$ be a GCP weak solution to \eqref{eq:QHD_1d} in the sense of Definition \ref{def:cptsln}, namely 
\begin{equation}\label{eq:unif0_prop}
\begin{aligned}
&\|\sqrt{\rho}\|_{L^\infty(0, T;H^1(\R))}+\|\Lambda\|_{L^\infty(0, T;L^2(\R))}\leq M_1\\
&\|\d_t\sqrt{\rho}\|_{L^{\infty}(0, T;L^2(\R))}+\|\lambda\|_{L^\infty(0, T;L^2(\R))}\leq M_2.
\end{aligned}
\end{equation}
Furthermore let us assume that the weak entropy inequality 
\begin{equation}\label{eq:enineq}
\d_te+\d_x(\Lambda\lambda-\d_t\sqrt{\rho}\d_x\sqrt{\rho})\le0
\end{equation}
is satisfied in the sense of  distribution. Then we have 
\begin{equation}\label{eq:kinL2tx}
\|e\|_{L^2_{t,x}}+\|\d_x^2\rho\|_{L^2_{t,x}}+\|\d_x J\|_{L^\infty_t L^2_x}\leq C(M_1,M_2)(1+T)^\frac12.
\end{equation}
\end{prop}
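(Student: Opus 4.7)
First, the bound on $\|\partial_x J\|_{L^\infty_t L^2_x}$ is immediate from the continuity equation $\partial_x J = -2\sqrt\rho\,\partial_t\sqrt\rho$ together with the Sobolev embedding $H^1(\R)\hookrightarrow L^\infty(\R)$: one has $\|\partial_x J(t)\|_{L^2_x}\leq 2\|\sqrt\rho(t)\|_{L^\infty_x}\|\partial_t\sqrt\rho(t)\|_{L^2_x}\leq C M_1 M_2$ for a.e. $t$. Next, identity \eqref{eq:lambda_intro} rearranges to $\partial_x^2\rho = 4(e + p(\rho) - \sqrt\rho\lambda)$, so
\[
\|\partial_x^2\rho\|_{L^2_{t,x}}^2 \lesssim \|e\|_{L^2_{t,x}}^2 + \|p(\rho)\|_{L^2_{t,x}}^2 + \|\sqrt\rho\,\lambda\|_{L^2_{t,x}}^2.
\]
Since $\|\rho\|_{L^\infty_{t,x}}\leq C M_1^2$ and $\rho,\,p(\rho)\in L^\infty_t L^1_x$, the last two terms contribute at most $C(M_1,M_2)\,T$, so the whole estimate \eqref{eq:kinL2tx} reduces to controlling $\|e\|_{L^2_{t,x}}$.

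For this last bound I will use a Morawetz-type argument in the spirit of the proof of Theorem \ref{thm:mor}. Let $G(t,x)=\int_{-\infty}^{x} J(t,s)\,ds$; the finite-mass/energy hypothesis gives $\|G\|_{L^\infty_{t,x}}\leq M_1^2$, and integrating the momentum equation in \eqref{eq:QHD_1d} in space yields
\[
\partial_t G \,=\, -\Lambda^2-(\partial_x\sqrt\rho)^2-p(\rho)+\tfrac14\partial_x^2\rho \,=\, -2e-(\gamma-3)f(\rho)+\tfrac14\partial_x^2\rho.
\]
The plan is to test the weak entropy inequality \eqref{eq:enineq} against the non-negative function $\varphi=\eta(t)(M_1^2+G(t,x))$, with $\eta$ a smooth time-cutoff approximating $\mathbf{1}_{[0,T]}$, and to substitute the above identity for $\partial_t G$ together with $\partial_x^2\rho=4(e+p(\rho)-\sqrt\rho\lambda)$ coming from \eqref{eq:lambda_intro}. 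Using the algebraic identity $p(\rho)-(\gamma-3)f(\rho)=2f(\rho)$, a short computation leads to
\[
\int_0^T\!\!\int_\R e^2\,dx\,dt \leq C(M_1)+2\!\int_0^T\!\!\int_\R ef(\rho)\,dx\,dt-\!\int_0^T\!\!\int_\R e\sqrt\rho\,\lambda\,dx\,dt+\!\int_0^T\!\!\int_\R(\Lambda\lambda-\partial_t\sqrt\rho\,\partial_x\sqrt\rho)J\,dx\,dt,
\]
where the constant $C(M_1)$ absorbs the time-boundary contributions $|\int e(M_1^2+G)\,dx|\leq C M_1^3$.

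The cross terms on the right are then handled via the pointwise bounds arising from the definition of $e$, namely $\Lambda^2\leq 2e$ and $|\Lambda\,\partial_x\sqrt\rho|\leq\tfrac12(\Lambda^2+(\partial_x\sqrt\rho)^2)\leq e$, together with the identity $J\,\partial_t\sqrt\rho=-\tfrac12\Lambda\,\partial_x J$ (obtained from the continuity equation on $\{\rho>0\}$ and vanishing on the vacuum). A direct application of Cauchy--Schwarz and of the uniform GCP bounds \eqref{eq:unif0_prop} produces
\[
\bigl|\!\int\!\Lambda\lambda J\,dx\bigr|,\;\bigl|\!\int\! e\sqrt\rho\,\lambda\,dx\bigr|,\;\bigl|\!\int\! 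J\,\partial_t\sqrt\rho\,\partial_x\sqrt\rho\,dx\bigr|\leq C(M_1,M_2)\|e(t)\|_{L^2_x},
\]
while $|\int ef(\rho)\,dx|\leq\|f(\rho)\|_{L^\infty_x}\|e(t)\|_{L^1_x}\leq C(M_1)$. Integrating in time and applying Young's inequality to each contribution of the form $\int_0^T\|e(t)\|_{L^2_x}\,dt\leq T^{1/2}\|e\|_{L^2_{t,x}}$ absorbs these into the left hand side and yields $\|e\|_{L^2_{t,x}}^2\leq C(M_1,M_2)(1+T)$, which combined with the first paragraph gives \eqref{eq:kinL2tx}.

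The principal technical obstacle I foresee is the rigorous justification of the formal manipulations above in the weak setting: the test function $M_1^2+G$ is bounded but not compactly supported, the identity for $\partial_t G$ only has distributional meaning through the weak momentum equation, and the entropy inequality itself is only assumed in the sense of distributions. A standard density argument employing spatial cutoffs of the form $\chi(x/R)(M_1^2+G)$ together with a mollification in time of $G$ should allow one to pass to the limit as $R\to\infty$, with the spatial boundary contributions vanishing thanks to $e,J\in L^\infty_t L^1_x$.
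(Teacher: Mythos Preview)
Your proposal is correct and follows essentially the same Morawetz-type strategy as the paper: both pair the energy density with the (nonnegatively shifted) antiderivative $G$ of $J$ and use the weak entropy inequality together with the integrated momentum equation. The only organizational difference is that the paper multiplies the $G$-equation by $(\sqrt\rho\,\lambda+e)_\eps$ and then invokes the entropy inequality to control the resulting $\int G_\eps\,\partial_t e_\eps$ term (obtaining first $\|\sqrt\rho\,\lambda+e\|_{L^2_{t,x}}$ and deducing $\|e\|_{L^2_{t,x}}$), whereas you test the entropy inequality directly against $M_1^2+G$ and substitute the algebraic identity $\partial_x^2\rho=4(e+p(\rho)-\sqrt\rho\,\lambda)$ to isolate $\int\!\!\int e^2$; the paper also handles the rigorous justification by space--time mollification rather than spatial cutoffs, but your proposed regularization would work equally well.
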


\begin{proof}
By Sobolev embedding, \eqref{eq:unif0_prop} implies $\rho$ is continuous in space and $\|\sqrt{\rho}\|_{L^\infty_{t,x}}\le C(M_1)$. Then by using the equation \eqref{eq:QHD_1d} we have
\[
\|\d_x J\|_{L^\infty_t L^2_x}=\|\d_t \rho\|_{L^\infty_t L^2_x} \le 2\|\sqrt{\rho}\|_{L^\infty_{t,x}} \|\d_t \sqrt{\rho}\|_{L^\infty_t L^2_x} \le C(M_1,M_2).
\]
As in the proof of Proposition \ref{thm:mor} we have $\|J\|_{L^\infty_tL^1_x}\leq M_1^2$. Thus, by choosing $C_1>0$ such that $C_1>\|J\|_{L^\infty_tL^1_x}$, we have that the anti-derivative of J defined by
\begin{equation*}
G(t,x)=\int_{-\infty}^x J(t,s)ds+C_1,
\end{equation*}
satisfies $G\geq0$ and moreover $\|G\|_{L^\infty_{t, x}}\leq C(\|J\|_{L^\infty_tL^1_x})$.
By integrating in space the momentum equation in \eqref{eq:QHD_1d}, it follows that
\begin{equation*}
\d_tG+\Lambda^2+(\d_x\sqrt\rho)^2+p(\rho)=\frac14\d_x^2\rho,
\end{equation*}
namely
\begin{equation}\label{eq:eqG2}
\d_tG+(\sqrt{\rho}\lambda+e)-2f(\rho)=0.
\end{equation}
The bounds in \eqref{eq:unif0_prop} imply the equation \eqref{eq:eqG2} holds in the space $L^\infty_t L^1_x$. 
\newline
Analogously to Theorem \ref{thm:mor}, here we want to consider the following interaction functional
\begin{equation*}
\int_{\R}G(t, x)e(t, x)\,dx
\end{equation*}
and compute its time derivative. However, since the entropy inequality \eqref{eq:enineq} only holds in the sense of distribution, we introduce the following mollification   to rigorously justify the argument. Let $\{\chi_\eps\}_{\eps>0}$ be a sequence of smooth, positive space-time mollifiers supported on $(-\eps,\eps)^2$, and for any function $g\in L^1([0,T);L^1_{loc}(\R))$ we define 
$$g_\eps(t,x)=\int_0^T\int_\R g(t-s,x-y)\chi_\eps(s,y)dyds,\quad (t,x)\in (\eps,T-\eps)\times\R.$$
By taking the convolution of \eqref{eq:enineq} and \eqref{eq:eqG2} with $\chi_\eps$, we obtain
\begin{equation}\label{eq:enineqeps}
\d_te_\eps+\d_x(\Lambda\lambda-\d_t\sqrt{\rho}\d_x\sqrt{\rho})_\eps\le0
\end{equation}
and
\begin{equation}\label{eq:eqG2eps}
\d_tG_\eps+(\sqrt{\rho}\lambda+e)_\eps-2f(\rho)_\eps=0.
\end{equation}

Let us multiply \eqref{eq:eqG2eps} by $(\sqrt{\rho}\lambda+e)_\eps$, then
\begin{equation}\label{eq:eq7}
\begin{aligned}
\int_{\eps}^{T-\eps}\int_\R (\sqrt\rho\lambda+e)^2_\eps dxdt=&-\int_{\eps}^{T-\eps}\int_\R (\sqrt\rho\lambda+e)_\eps\d_tG_\eps\,dxdt\\
&+2\int_{\eps}^{T-\eps}\int_\R (\sqrt\rho\lambda+e)_\eps f(\rho)_\eps dxdt.
\end{aligned}
\end{equation}
We estimate the right hand side of \eqref{eq:eq7} in the following way. 

The first term on the right hand side satisfies
\begin{align*}
-\int_{\eps}^{T-\eps}\int_\R (\sqrt\rho\lambda+e)_\eps\d_tG_\eps dxdt=&-\left.\int_\R (\sqrt\rho\lambda+e)_\eps(t)G_\eps(t)dx\right|_{t=\eps}^{t=T-\eps}\\
&+\int_{\eps}^{T-\eps}\int_\R G_\eps\d_t(\sqrt\rho\lambda+e)_\eps dxdt\\
=&-\left.\int_\R (\sqrt\rho\lambda+e)_\eps(t)G_\eps(t)dx\right|_{t=\eps}^{t=T-\eps}\\
&+\int_{\eps}^{T-\eps}\int_\R G_\eps\d_t(-\frac14\d_x^2\rho_\eps+2e_\eps+p(\rho)_\eps)dxdt,
\end{align*}
where the boundary terms can be estimated as follows
\begin{align*}
\int_\R (\sqrt\rho\lambda+e)_\eps(t)G_\eps(t)dx\le&\|G\|_{L^\infty_{t,x}}(\|e\|_{L^\infty_{t}L^1_x}+\|\sqrt\rho\|_{L^\infty_{t}L^2_x}\|\lambda\|_{L^\infty_{t}L^2_x})\le C(M_1,M_2).
\end{align*}
Now we estimate the integral 
\begin{equation}\label{eq:eq8}
\int_{\eps}^{T-\eps}\int_\R G_\eps\d_t(-\frac14\d_x^2\rho_\eps+2e_\eps+p(\rho)_\eps)dxdt.
\end{equation}
By using the equation of $\rho$ in \eqref{eq:QHD_1d} and integration by parts in space, we have 
\begin{align*}
\int_{\eps}^{T-\eps}\int_\R G_\eps\d_t(-\frac14\d_x^2\rho_\eps)dxdt=&-\frac14\int_{\eps}^{T-\eps}\int_\R \d_x^2G_\eps\d_t\rho_\eps\,dxdt\\
=&\frac14\int_{\eps}^{T-\eps}\int_\R (\d_xJ_\eps)^2dxdt\leq C(M_1, M_2) T. 
\end{align*} 
By using the mollified entropy inequality \eqref{eq:enineqeps} and $G_\eps\ge 0$, we obtain
\begin{align*}
\int_{\eps}^{T-\eps}\int_\R G_\eps\,\d_t e_\eps\,dxdt\le&-\int_{\eps}^{T-\eps}\int_\R G_\eps\,\d_x(\lambda\Lambda-\d_x\sqrt\rho\d_t\sqrt\rho)_\eps\,dxdt\\\
=&\int_{\eps}^{T-\eps}\int_\R J_\eps\,(\lambda\Lambda-\d_x\sqrt\rho\d_t\sqrt\rho)_\eps\,dxdt\\
\leq & T\,\|J\|_{L^\infty_{t,x}}\left(\|\lambda\|_{L^\infty_t L^2_x}\|\Lambda\|_{L^\infty_t L^2_x}\right.\\
&\quad +\left.\|\d_x\sqrt\rho\|_{L^\infty_t L^2_x}\|\d_t\sqrt\rho\|_{L^\infty_t L^2_x}\right)\leq C(M_1,M_2)T.
\end{align*}
Next we have 
\begin{align*}
\int_{\eps}^{T-\eps}\int_\R G_\eps\d_t p(\rho)_\eps dxdt=&2\int_{\eps}^{T-\eps}\int_\R G_\eps[p'(\rho)\sqrt\rho\d_t\sqrt\rho]_\eps dxdt\\
\le & 2T\|G\|_{L^\infty_{t,x}}\|p'(\rho)\sqrt\rho\|_{L^\infty_t L^2_x}\|\d_t\sqrt\rho\|_{L^\infty_t L^2_x}\le C(M_1,M_2)T.
\end{align*}
 Thus we control \eqref{eq:eq8} by $C(M_1,M_2)T$.

For the last term on the right hand side of \eqref{eq:eq7}, by using the $L^\infty_{t,x}$ bound of $\sqrt{\rho}$ and \eqref{eq:unif0_prop}, it is straightforward to obtain that
\[
\int_{\eps}^{T-\eps}\int_\R (\sqrt\rho\lambda+e)_\eps f(\rho)_\eps dxdt\le C(M_1,M_2)T.
\]

Summarising the previous estimates we obtain
\begin{equation*}
\int_{\eps}^{T-\eps}\int_\R (\sqrt\rho\lambda+e)^2_\eps\, dxdt\leq C(M_1,M_2)(1+T),
\end{equation*}
uniformly in $\eps>0$.
By combining this bound together with  
$(\sqrt{\rho}\lambda+e)_\eps\to\sqrt{\rho}\lambda+e$ in $L^\infty_tL^1_x$, as $\eps\to0$, we finally obtain
\begin{equation*}
\|\sqrt{\rho}\lambda+e\|_{L^2_{t,x}}^2\leq \liminf\limits_{\eps\to 0}\int_{\eps}^{T-\eps}\int_\R (\sqrt\rho\lambda+e)^2_\eps \,dxdt\leq C(M_1,M_2)(1+T).
\end{equation*}
Therefore 
\begin{align*}
\|e\|_{L^2_{t,x}}^2\le & \|\sqrt{\rho}\lambda+e\|_{L^2_{t,x}}^2+\|\sqrt{\rho}\lambda\|_{L^2_{t,x}}^2\\
\le & \|\sqrt{\rho}\lambda+e\|_{L^2_{t,x}}^2+T\|\sqrt{\rho}\|_{L^\infty_{t,x}}^2\|\lambda\|_{L^\infty_t L^2_x}^2\le C(M_1,M_2)(1+T).
\end{align*}

We conclude our argument by estimating $\d_x^2\rho$. By using the definition of $\lambda$ and the previous bounds, it follows that
\begin{align*}
\|\d_x^2\rho\|_{L^2_{t,x}}^2\le & 4\|\sqrt\rho\lambda+e\|_{L^2_{t,x}}^2+4\|p(\rho)\|_{L^2_{t,x}}^2
\le C(M_1,M_2)(1+T).
\end{align*}
\end{proof}

\section{Stability}\label{sect:comp}

In this section we are going to prove Theorem \ref{thm:stab}, namely that for any uniformly bounded sequence of GCP weak solutions in the sense of Definition \ref{def:cptsln}, it is possible to infer suitable stability properties. Let us remark here that in general we don't know whether the solutions constructed in Theorem \ref{thm:glob} and \ref{thm:glob2} are the only possible ones, as there could be alternative methods to construct weak solutions to \eqref{eq:QHD_1d}. However for the classes of weak solutions we consider in this paper, the energy \eqref{eq:en_QHD} and the functional \eqref{eq:higher} are always uniformly bounded along the flow of solutions to \eqref{eq:QHD_1d}, and it is also true for smooth solutions as shown in Proposition \ref{prop:higher}.

To prove Theorem \ref{thm:stab} we consider a sequence of GCP weak solutions satisfying the following uniform bounds
\begin{equation}\label{eq:261}
\begin{aligned}
&\|\sqrt{\rho_n}\|_{L^\infty(0, T;H^1(\R))}+\|\Lambda_n\|_{L^\infty(0, T;L^2(\R))}\leq M_1\\
&\|\d_t\sqrt{\rho_n}\|_{L^\infty(0, T;L^2(\R))}+\|\lambda_n\|_{L^\infty(0, T;L^2(\R))}\leq M_2,
\end{aligned}
\end{equation}
where $\lambda_n$ is such that 
\begin{equation}\label{eq:chem_n}
\sqrt{\rho_n}\lambda_n=-\frac14\d_x^2\rho_n + e_n + p(\rho_n)%=\IM(\bar\phi_n\d_t\psi_n) 
\end{equation}
and 
\begin{equation}\label{eq:en_dens_n}
e_n=\frac12(\d_x\sqrt{\rho_n})^2+\frac12\Lambda_n^2+f(\rho_n).
\end{equation}
Besides the above uniform bounds we need some further assumptions in order to infer the compactness for $\{\d_x\sqrt{\rho_n}\}$ and $\{\Lambda_n\}$. More precisely we are going to assume that one of the three assumptions stated in Theorem \ref{thm:stab} holds true. Here we start by considering the assumption (3), namely we assume that the sequence  $\{(\sqrt{\rho_n}, \Lambda_n)\}$ satisfies the weak entropy inequality
\begin{equation}\label{eq:enineq_n}
\d_te_n+\d_x(\Lambda_n\lambda_n-\d_x\sqrt\rho_n\d_t\sqrt\rho_n)\leq 0
\end{equation}
for all $n\ge 1$. By using Proposition \ref{prop:unif0} we have the additional estimates
\begin{equation}\label{eq:unif3}
\|e_n\|_{L^2_{t,x}}+\|\d_x^2\rho_n\|_{L^2_{t,x}}+\|\d_x J_n\|_{L^\infty_t L^2_x}\leq C(1+T)^\frac12.
\end{equation}
By collecting the bounds \eqref{eq:261} and \eqref{eq:unif3} we have, up to subsequences,
\begin{align}\label{eq:weak1}
\sqrt{\rho_n}\rightharpoonup&\sqrt{\rho},\qquad L^\infty(0, T;H^1(\R))\cap W^{1, \infty}(0, T;L^2(\R)),\\
\label{eq:weak2}
\Lambda_n\rightharpoonup&\Lambda,\qquad L^\infty(0, T;L^2(\R)),
\end{align}
and for the energy density
\begin{equation}\label{eq:weak3}
e_n\rightharpoonup\nu,\qquad L^2(0, T;L^2(\R)).
\end{equation} 
One of the main difficulties to show our stability result is to verify that \eqref{eq:en_dens_n} holds also in the limit, namely that we have
\begin{equation*}
\nu=\frac12(\d_x\sqrt{\rho})^2+\frac12\Lambda^2+f(\rho).
\end{equation*}

The next Proposition shows that \eqref{eq:en_dens_n} indeed holds in the limiting case, and $(\sqrt{\rho_n},\Lambda_n)$ converges strongly. Furthermore, we prove that $\nu$ and $\Lambda$ vanish almost everywhere in the vacuum set $\{\rho=0\}$, which matches with the physical interpretation of the energy density.

\begin{prop}\label{prop:strong1}
Let $\{(\rho_n,J_n)\}$ be a sequence of GCP weak solutions to \eqref{eq:QHD_1d} satisfying the weak entropy inequality \eqref{eq:enineq_n}. Then for the weak limits \eqref{eq:weak1}, \eqref{eq:weak2} and \eqref{eq:weak3} we have the following nonlinear identity
\begin{equation*}
\nu=\frac12(\d_x\sqrt{\rho})^2+\frac12\Lambda^2+f(\rho)
\end{equation*}
for a.e. $(t,x)\in[0,T]\times\R$, and $\nu=0$ a.e. $(t,x)\in\{\rho=0\}$. Furthermore we have the following local strong convergence results
\begin{equation*}
\begin{aligned}
\d_x\sqrt{\rho_n}&\to \d_x\sqrt{\rho},&L^2(0, T;L^2_{loc}(\R)),\\
\Lambda_n&\to\Lambda,&L^2(0, T;L^2_{loc}(\R)).
\end{aligned}
\end{equation*}
\end{prop}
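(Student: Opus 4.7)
The plan is to identify the weak-$L^2$ limit $\nu$ of the energy densities $e_n$ with the ``natural'' limiting energy density $\bar e := \tfrac12(\partial_x\sqrt{\rho})^2+\tfrac12\Lambda^2+f(\rho)$; once this is proved, strong local convergence of $\partial_x\sqrt{\rho_n}$ and $\Lambda_n$ follows automatically. Indeed, the pointwise identity $e_n = \tfrac12(\partial_x\sqrt{\rho_n})^2+\tfrac12\Lambda_n^2+f(\rho_n)$ combined with $\nu = \bar e$ and the strong convergence of $f(\rho_n)$ (established below) gives
\[
\int\phi\bigl[(\partial_x\sqrt{\rho_n})^2+\Lambda_n^2\bigr]\,dx\,dt\longrightarrow \int\phi\bigl[(\partial_x\sqrt{\rho})^2+\Lambda^2\bigr]\,dx\,dt
\]
for every nonnegative compactly supported $\phi$. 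Since each squared quantity is weakly lower semicontinuous in its own right, convergence of the sum of the two local $L^2$-norms together with the weak convergences of the factors forces strong convergence in $L^2(0,T;L^2_{loc}(\R))$ of each of them.

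The preliminary compactness is supplied by Aubin--Lions. The bounds $\|\sqrt{\rho_n}\|_{L^\infty_tH^1_x}+\|\partial_t\sqrt{\rho_n}\|_{L^\infty_tL^2_x}\leq C$ yield, up to subsequence, $\sqrt{\rho_n}\to\sqrt{\rho}$ strongly in $C([0,T];L^p_{loc}(\R))$ for every $p<\infty$, whence $\rho_n$, $f(\rho_n)$ and $p(\rho_n)$ converge strongly in the corresponding local norms. The improved bound \eqref{eq:unif3} additionally gives $\rho_n$ bounded in $L^2_tH^2_x$ with $\partial_t\rho_n=-\partial_xJ_n$ bounded in $L^\infty_tL^2_x$, so a second application of Aubin--Lions yields $\rho_n\to\rho$ strongly in $L^2(0,T;H^1_{loc}(\R))$, in particular $\partial_x\rho_n\to\partial_x\rho$ strongly in $L^2(0,T;L^2_{loc}(\R))$. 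Passing to the limit in the identity \eqref{eq:chem_n} — the product $\sqrt{\rho_n}\lambda_n$ converges as a strong-weak pairing to $\sqrt{\rho}\,\tilde\lambda$, where $\tilde\lambda$ denotes the weak-$\ast$ limit of $\lambda_n$ in $L^\infty_tL^2_x$ — one derives the distributional identity
\[
\nu=\sqrt{\rho}\,\tilde\lambda+\tfrac14\partial_x^2\rho-p(\rho).
\]
Weak lower semicontinuity of the $L^2$-norm, combined with the strong convergence of $f(\rho_n)$, gives immediately one half of the identification, $\nu\geq\bar e$ a.e.

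The vacuum set is handled using Lemma \ref{lemma:LL}. Since $\rho\in L^2(0,T;H^2(\R))$, for a.e.\ $t$ the profile $\rho(t,\cdot)$ belongs to $C^1(\R)$; because $\rho\geq 0$, every point with $\rho=0$ is a local minimum, so $\partial_x\rho$ also vanishes there. Applying Lemma \ref{lemma:LL} to $\partial_x\rho\in H^1$ one obtains $\partial_x^2\rho=0$ a.e.\ on $\{\rho=0\}$, and plugging this into the limit identity derived above gives $\nu=0$ a.e.\ on $\{\rho=0\}$. Similarly $\partial_x\sqrt{\rho}=0$ a.e.\ on $\{\rho=0\}$ by the same lemma applied to $\sqrt{\rho}$, and the fact that $\Lambda=0$ a.e.\ on $\{\rho=0\}$ can be built into the choice of representative for the weak limit (modifying $\Lambda$ on a null set if necessary, consistently with $J=\sqrt{\rho}\Lambda$ and the polar factorization available for an $H^1$-lift of $(\sqrt{\rho},\Lambda)$ via Proposition \ref{prop:lift1}), so $\bar e=0$ a.e.\ on $\{\rho=0\}$.

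The main obstacle — and the step for which the weak entropy inequality is indispensable — is the reverse inequality $\nu\leq\bar e$ on $\{\rho>0\}$. Equivalently, writing $\bar\lambda$ for the generalized chemical potential attached to the limit state $(\sqrt{\rho},\Lambda)$ via Definition \ref{def:lambda} (which satisfies $\sqrt{\rho}\bar\lambda=-\tfrac14\partial_x^2\rho+\bar e+p(\rho)$), the question is to show $\sqrt{\rho}(\tilde\lambda-\bar\lambda)\leq 0$ a.e. The plan is to pass the entropy inequality \eqref{eq:enineq_n} to the distributional limit, carefully identifying the limit of the flux $\Lambda_n\lambda_n-\partial_t\sqrt{\rho_n}\partial_x\sqrt{\rho_n}$ which is a product of merely weakly convergent quantities, and to compare the resulting inequality for $\nu$ with the formal entropy identity for $\bar e$ built from $\bar\lambda$. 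The combination is designed to deliver an integrated bound $\iint\phi(\nu-\bar e)\,dx\,dt\leq 0$ for all nonnegative test functions $\phi$, which together with the already established $\nu\geq\bar e$ a.e. forces $\nu=\bar e$ a.e. and concludes the proof as outlined in the first paragraph.
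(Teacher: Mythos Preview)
Your preliminary compactness and the vacuum-set analysis are fine and match the paper. The gap is in the reverse inequality $\nu\le\bar e$ on $\{\rho>0\}$: your plan to pass the entropy inequality \eqref{eq:enineq_n} to the limit and compare it with a ``formal entropy identity for $\bar e$'' cannot be carried out as stated. The flux $\Lambda_n\lambda_n-\partial_t\sqrt{\rho_n}\,\partial_x\sqrt{\rho_n}$ is a sum of products of merely weakly convergent factors, and you have no mechanism to identify its distributional limit without \emph{already} knowing strong convergence of one factor in each pair --- which is precisely the conclusion you are trying to prove. Nor is an entropy identity for $\bar e$ available a priori for the limit state. The last paragraph is therefore a circular sketch rather than an argument.

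The paper bypasses this completely with a one-line algebraic observation: multiply $e_n$ by $\rho_n$ to obtain
\[
\rho_n e_n=\tfrac18(\partial_x\rho_n)^2+\tfrac12 J_n^2+f(\rho_n)\rho_n,
\]
an identity in which every term on the right is built from quantities ($\rho_n$, $\partial_x\rho_n$, $J_n$) that converge \emph{strongly} in $L^2_tL^2_{loc}$ by the Aubin--Lions step you already carried out (together with the analogous compactness for $J_n$, using $\partial_xJ_n\in L^\infty_tL^2_x$ from \eqref{eq:unif3} and $\partial_tJ_n$ controlled via the momentum equation). Since $\rho_n\to\rho$ strongly and $e_n\rightharpoonup\nu$ weakly in $L^2_{t,x}$, the left side converges to $\rho\nu$, and one reads off $\rho(\nu-\bar e)=0$ a.e., i.e.\ $\nu=\bar e$ on $\{\rho>0\}$ with no inequality to reverse. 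The entropy inequality is used only through Proposition~\ref{prop:unif0} to secure the bounds \eqref{eq:unif3}; it plays no further role in the identification of $\nu$.
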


\begin{proof}
By using the identity \eqref{eq:en_dens_n}, we have 
\begin{equation}\label{eq:101}
\rho_n e_n=\frac18(\d_x\rho_n)^2+\frac12J_n^2+f(\rho_n)\rho_n.
\end{equation}
From the uniform bounds \eqref{eq:unif3} it follows that, up to subsequences,
\begin{equation}\label{eq:strong1}
\begin{aligned}
\rho_n\to&\rho, \quad L^2(0,T;H^1_{loc}(\R))\\
J_n\to&J,\quad L^2(0,T;L^2_{loc}(\R)),
\end{aligned}
\end{equation}
hence, by passing to the limit in \eqref{eq:101} we obtain
\begin{equation}\label{eq:103}
\rho\nu=\frac18(\d_x\rho)^2+\frac12J^2+f(\rho),\quad\textrm{a.e.}
\end{equation}
Since $J_n=\sqrt{\rho_n}\Lambda_n$, 
then again by \eqref{eq:weak2} and \eqref{eq:strong1} we have $J=\sqrt{\rho}\Lambda$ a.e. in $\R$,
therefore that we can write \eqref{eq:103} as 
\begin{equation}\label{eq:102}
\rho\left(\nu-\frac12(\d_x\sqrt{\rho})^2-\frac12\Lambda^2-f(\rho)\right)=0,\quad\textrm{a.e.}
\end{equation}
We now claim that $\nu=0$ a.e. on $\{\rho=0\}$. By passing to the limit in the identity \eqref{eq:chem_n} we obtain
\begin{equation*}
\nu=\sqrt{\rho}\lambda+\frac14\d_x^2\rho-p(\rho).
\end{equation*}
Again by Lemma \ref{lemma:LL} we have $\d_x^2\rho=0$ a.e. on $\{\rho=0\}$, consequently $\nu=0$ a.e. on $\{\rho=0\}$.
For any $R>0$ let us define $V_R=\{\rho=0\}\cap([0,T]\times(-R, R))$, then by \eqref{eq:261}, \eqref{eq:en_dens_n} and \eqref{eq:weak3} it follows that
\begin{align*}
\int\int_{V_R}\frac12(\d_x\sqrt{\rho})^2+\frac12\Lambda^2\,dxdt\leq&\liminf_{n\to\infty}\int\int_{V_R}\frac12(\d_x\sqrt{\rho_n})^2+\frac12\Lambda_n^2\,dxdt\\
=&\lim_{n\to\infty}\int\int_{V_R}e_n-f(\rho_n)\,dxdt\\
=&\int\int_{V_R}\nu-f(\rho)\,dxdt=0.
\end{align*}
The previous inequality implies that $\Lambda=0$ a.e. on $\{\rho=0\}$ and consequently from \eqref{eq:102} we also have 
\begin{equation*}
\nu=\frac12(\d_x\sqrt{\rho})^2+\frac12\Lambda^2+f(\rho).
\end{equation*}
We conclude by proving the local strong convergence. Because of the previous results, for any $R>0$ it follows that
\begin{equation*}
\lim_{n\to\infty}\int_0^T\int_{-R}^R e_n-f(\rho_n)dxdt=\int_0^T\int_{-R}^R\nu-f(\rho)\,dxdt,
\end{equation*}
therefore 
\begin{align*}
\int_0^T\int_{-R}^R\frac12(\d_x\sqrt{\rho})^2+\frac12\Lambda^2\,dxdt&\leq\liminf_{n\to\infty}\int_0^T\int_{-R}^R\frac12(\d_x\sqrt{\rho_n})^2+\frac12\Lambda_n^2\,dxdt\\
&=\lim_{n\to\infty}\int_0^T\int_{-R}^R e_n-f(\rho_n)\,dxdt\\
&=\int_0^T\int_{-R}^R\nu-f(\rho)\,dxdt\\
&=\int_0^T\int_{-R}^R\frac12(\d_x\sqrt{\rho})^2+\frac12\Lambda^2\,dxdt,
\end{align*}
and we conclude 
\begin{equation*}
\|\d_x\sqrt{\rho}\|_{L_{t}^{2}L_{x, loc}^{2}}=\lim_{n\to\infty}\|\d_x\sqrt{\rho_n}\|_{L_{t}^{2}L_{x, loc}^{2}},\quad \|\Lambda\|_{L_{t}^{2}L_{x, loc}^{2}}=\lim_{n\to\infty}\|\Lambda_n\|_{L_{t}^{2}L_{x, loc}^{2}}.
\end{equation*}
Hence by using the convergence of norms we can improve the weak convergence to a strong one.
\end{proof}

In what follows we are going to show that we are able to prove the analogue of Proposition \ref{prop:strong1} even under the assumptions (1) or (2) of Theorem \ref{thm:stab}, which is a direct consequence of the wave function lifting method established in the Section \ref{sect:lift}.

\begin{prop}\label{unif2}
Let $\{(\rho_n,J_n)\}$ be a sequence of GCP weak solutions to \eqref{eq:QHD_1d}. 
Let us further assume that one of the following conditions holds true for the sequence $\{(\rho_n, J_n)\}$:
\begin{itemize}
\item[(1)] for almost every $t\in [0,T]$, $\rho_n(t,\cdot)>0$;
\item[(2)] for almost every $t\in [0,T]$, $e_n(t,\cdot)$ is continuous.
\end{itemize} 
Then we have 
\begin{equation}\label{eq:unif2}
\|\d_x^2\rho_n\|_{L^\infty_t L^2_x}+\|\d_x J_n\|_{L^\infty_t L^2_x}+\|\d_x \sqrt{e_n}\|_{L^\infty_t L^2_x}\leq C(M_1,M_2).
\end{equation}
\end{prop}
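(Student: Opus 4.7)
The plan is to reduce the three bounds in \eqref{eq:unif2} to the $H^2$ wave function lifting of Proposition \ref{prop:lift2}. The key step is to show that under either (1) or (2), for a.e.\ $t\in[0,T]$ there is an associated $\psi_n(t)\in H^2(\R)$ with $\|\psi_n(t)\|_{H^2}\leq C(M_1,M_2)$ uniformly in $n$; from there the three bounds follow by direct manipulation of $\psi_n$.

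First I would verify the GCP-state bounds \eqref{eq:bd_lift2} uniformly in $n$ and $t$. The $H^1\times L^2$ bound on $(\sqrt{\rho_n},\Lambda_n)$ and the $L^2$ bound on $\lambda_n$ are already supplied by the GCP-solution assumption. The remaining quantity $\|\mathbf{1}_{\{\rho_n>0\}}\d_x J_n/\sqrt{\rho_n}\|_{L^2}$ comes from the continuity equation, using the chain rule $\d_t\rho_n=2\sqrt{\rho_n}\,\d_t\sqrt{\rho_n}$, valid a.e.\ by Lemma \ref{lemma:LL}. This gives
\[
\mathbf{1}_{\{\rho_n>0\}}\frac{\d_x J_n}{\sqrt{\rho_n}}=-2\,\mathbf{1}_{\{\rho_n>0\}}\d_t\sqrt{\rho_n}\quad\text{a.e.},
\]
which is bounded by $2M_2$ in $L^\infty_tL^2_x$ and also immediately produces the second term of \eqref{eq:unif2}, since $\|\d_x J_n\|_{L^2}\leq 2\|\sqrt{\rho_n}\|_{L^\infty}\|\d_t\sqrt{\rho_n}\|_{L^2}\leq C(M_1,M_2)$ via the Sobolev embedding $H^1(\R)\hookrightarrow L^\infty(\R)$ applied to $\sqrt{\rho_n}$.

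To obtain $\psi_n(t)\in H^2(\R)$: under (2), the continuity of $e_n(t)$ together with the GCP-state bounds is exactly the hypothesis of Proposition \ref{prop:lift2}, and the uniform $H^2$ estimate follows from \eqref{eq:H2_lift}. Under (1), the set $\{\rho_n(t)>0\}$ has the single connected component $\R$ itself, so Lemma \ref{lemma:H2}(2) supplies the $H^2$ regularity of $\psi_n(t)$ directly, the phase-shift construction of Proposition \ref{prop:lift2} being unnecessary; the identity \eqref{eq:dx2psi} applied on $\R$ yields $\|\d_x^2\psi_n\|_{L^2}\leq C(M_1,M_2)$ from the $L^2$ bounds on $\lambda_n$ and $\mathbf{1}_{\{\rho_n>0\}}\d_x J_n/\sqrt{\rho_n}$ established in the previous step.

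With $\|\psi_n(t)\|_{H^2}\leq C(M_1,M_2)$ in hand, the remaining two bounds are elementary. For $\d_x^2\rho_n$, expanding $\rho_n=|\psi_n|^2$ gives $\d_x^2\rho_n=2\RE(\bar\psi_n\d_x^2\psi_n)+2|\d_x\psi_n|^2$, so H\"older combined with the embedding $H^1(\R)\hookrightarrow L^\infty(\R)$ yields $\|\d_x^2\rho_n\|_{L^2}\lesssim\|\psi_n\|_{H^2}^2$. For $\d_x\sqrt{e_n}$, starting from $e_n=\tfrac12|\d_x\psi_n|^2+f(|\psi_n|^2)$ and applying the pointwise inequality $|\d_x|\d_x\psi_n||\leq|\d_x^2\psi_n|$ a.e.\ together with the chain rule for $f$, exactly as in the final step of the proof of Theorem \ref{thm:glob2}, gives the desired bound. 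The only substantive obstacle is ensuring that the lifting of Proposition \ref{prop:lift2} can be invoked with constants uniform in $n$, which is precisely the content of its \emph{moreover} clause applied pointwise in $t$.
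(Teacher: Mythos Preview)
Your proposal is correct and follows essentially the same strategy as the paper: verify that for a.e.\ $t$ the pair $(\sqrt{\rho_n}(t),\Lambda_n(t))$ is a GCP state, invoke Proposition~\ref{prop:lift2} (or Lemma~\ref{lemma:H2}(2) directly under assumption (1)) to get $\psi_n(t)\in H^2$ with uniform bound, and read off the estimates from the $H^2$ regularity. You spell out more explicitly than the paper does how the missing GCP-state ingredient $\|\mathbf{1}_{\{\rho_n>0\}}\d_x J_n/\sqrt{\rho_n}\|_{L^2}$ is recovered from the continuity equation and the $\d_t\sqrt{\rho_n}$ bound, and how each of the three estimates in \eqref{eq:unif2} follows from $\|\psi_n\|_{H^2}$; the paper simply states that these are ``a direct consequence of the $H^2$ regularity of $\psi_n$ and the polar factorization.''
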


\begin{proof}
Let $(\rho_n,J_n)$ be a GCP solution, then the bounds $\eqref{eq:261}$ are satisfied, and let us assume that $e_n(t,\cdot)$ is continuous for a.e. $t\in[0,T)$. Then it is straightforward to see that for a.e. $t\in[0,T)$, the hydrodynamic state $(\sqrt{\rho_n}(t,\cdot),\Lambda_n(t,\cdot))$ is a GCP state. Therefore there exists a wave function $\psi_n(t, \cdot)\in H^2(\R)$ associated to $(\sqrt{\rho_n}(t,\cdot),\Lambda_n(t,\cdot))$ in the sense of the Definition \ref{def:ass}, moreover we have
\[
\|\psi_n(t, \cdot)\|_{H^2(\R)}\leq C(M_1,M_2).
\]
The estimates \eqref{eq:unif2} is a direct consequence of the $H^2$ regularity of $\psi_n$ and the polar factorization.

If we assume the condition that $\rho_n(t,\cdot)>0$ for a.e. $t\in[0,T)$, by Lemma \ref{lemma:H2} it follows that the wave function lifting argument still holds true.
\end{proof}

Let us remark that in the proof of Proposition \ref{unif2}, the associated wave function $\psi_n(x;t)$ is constructed for isolated $t$, and in general it is not a solution to the Schr\"odinger equation.
On the other hand, for solutions like the ones constructed in Theorem \ref{thm:glob2}, the bounds in \eqref{eq:unif2} come straightforwardly, see \eqref{eq:B2_c}. The bound of $\sqrt{e_n}$ in \eqref{eq:unif2} implies local strong convergence 
\begin{equation}\label{eq:locstrong1}
\sqrt{e_n}\rightarrow\omega,\qquad L^p(0, T;L^2_{loc}(\R)),
\end{equation}
for $2\leq p<\infty$ and we also have the compactness proposition. Proposition \ref{prop:strong2} can be proved by same argument as Proposition \ref{prop:strong1} with minor modification.

\begin{prop}\label{prop:strong2}
Let $\{(\sqrt\rho_n,\Lambda_n)\}$ be a sequence of weak solution to \eqref{eq:QHD_1d} in the GCP class. 
Let us further assume that one of the following conditions holds true for the sequence $\{(\sqrt{\rho_n}, \Lambda_n)\}$:
\begin{itemize}
\item[(1)] for almost every $t\in [0,T]$, $\rho_n(t,\cdot)>0$;
\item[(2)] for almost every $t\in [0,T]$, $e_n(t,\cdot)$ is continuous.
\end{itemize}
Then the weak limits \eqref{eq:weak1}, \eqref{eq:weak2} and the local strong limit \eqref{eq:locstrong1} satisfy
\begin{equation*}
\omega^2=\frac12(\d_x\sqrt{\rho})^2+\frac12\Lambda^2+f(\rho)
\end{equation*}
for a.e. $(t,x)\in[0\,T]\times\R$, and $\omega=0$ a.e. $(t,x)\in\{\rho=0\}$. Furthermore we have the following local strong convergence
\begin{equation*}
\begin{aligned}
\d_x\sqrt{\rho_n}&\to \d_x\sqrt{\rho},&L^p(0, T;L^2_{loc}(\R)),\\
\Lambda_n&\to\Lambda,&L^p(0, T;L^2_{loc}(\R)),
\end{aligned}
\end{equation*}
for $2\le p <\infty$.
\end{prop}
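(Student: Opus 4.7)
The plan is to adapt the proof of Proposition \ref{prop:strong1}, using the improved spatial regularity of $\sqrt{e_n}$ provided by Proposition \ref{unif2} in place of the Morawetz-type $L^2_{t,x}$ control on $e_n$ that was available under the entropy assumption (3). The two proofs share the same skeleton: pass to the limit in the mass-weighted energy identity, handle the vacuum set through the chemical potential relation, and upgrade weak to strong convergence via a convergence-of-norms argument.

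First I would use the bounds \eqref{eq:unif2} from Proposition \ref{unif2}, combined with the continuity equation $\d_t\rho_n = -\d_x J_n$, to apply Aubin--Lions locally and extract a subsequence with $\rho_n \to \rho$ in $L^p(0,T;H^1_{loc}(\R))$ and $J_n \to J$ in $L^p(0,T;L^2_{loc}(\R))$ for each $2\le p<\infty$. The same Aubin--Lions argument applied to $\sqrt{e_n}$ --- bounded in $L^\infty_tH^1_x$ by \eqref{eq:unif2}, with time regularity inherited from the $H^2$ wave function lifting used in Proposition \ref{unif2} --- yields the strong convergence \eqref{eq:locstrong1}, and in particular $e_n = (\sqrt{e_n})^2 \to \omega^2$ in $L^{p/2}(0,T;L^1_{loc}(\R))$. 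Multiplying the pointwise identity $e_n = \tfrac12(\d_x\sqrt{\rho_n})^2 + \tfrac12\Lambda_n^2 + f(\rho_n)$ by $\rho_n$ rewrites it as $\rho_n e_n = \tfrac18(\d_x\rho_n)^2 + \tfrac12 J_n^2 + f(\rho_n)\rho_n$, and the strong convergences let me pass to the limit to obtain $\rho\bigl(\omega^2 - \tfrac12(\d_x\sqrt{\rho})^2 - \tfrac12\Lambda^2 - f(\rho)\bigr)=0$ a.e., which establishes the claimed identity on $\{\rho>0\}$.

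The main obstacle is to show $\omega = 0$ a.e.\ on the vacuum set $\{\rho = 0\}$: contrary to Proposition \ref{prop:strong1}, where the limit $\nu$ appears linearly in the chemical potential relation and vanishes directly from it, here the quantity $\omega^2$ appears, so the vanishing must be argued through the square. To do this I would pass to the limit in $\sqrt{\rho_n}\lambda_n = -\tfrac14\d_x^2\rho_n + e_n + p(\rho_n)$, using a weakly convergent subsequence $\lambda_n \rightharpoonup \lambda$ in $L^\infty_tL^2_x$ together with the strong convergence of $\sqrt{\rho_n}$ and $e_n$, to obtain $\sqrt{\rho}\lambda = -\tfrac14\d_x^2\rho + \omega^2 + p(\rho)$ in the sense of distributions. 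On $\{\rho=0\}$ the left-hand side vanishes, $p(\rho)=0$, and two successive applications of Lemma \ref{lemma:LL} to $\rho \in L^\infty_tH^2_x$ force $\d_x^2\rho = 0$ a.e., hence $\omega = 0$ a.e.\ on $\{\rho=0\}$.

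Finally, repeating the second half of Proposition \ref{prop:strong1}'s argument closes the proof. On the localized vacuum sets $V_R = \{\rho=0\}\cap([0,T]\times(-R,R))$, weak lower semicontinuity of the $L^2$ norm combined with $\lim_n \iint_{V_R}(e_n - f(\rho_n)) = \iint_{V_R}(\omega^2 - f(\rho)) = 0$ forces $\Lambda = 0$ a.e.\ on $\{\rho=0\}$ and completes the identity for $\omega^2$ everywhere. Strong convergence of $\d_x\sqrt{\rho_n}$ and $\Lambda_n$ in $L^2(0,T;L^2_{loc}(\R))$ then follows from the convergence of the $L^2$ norms on each box, using the rewriting $\tfrac12(\d_x\sqrt{\rho_n})^2 + \tfrac12\Lambda_n^2 = e_n - f(\rho_n)$ and strong convergence of the right-hand side, combined with weak convergence; interpolation against the uniform $L^\infty_tL^2_x$ bound upgrades this to $L^p(0,T;L^2_{loc}(\R))$ for every finite $p\ge 2$.
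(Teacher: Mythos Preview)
Your overall strategy mirrors the paper's, which simply says the argument of Proposition~\ref{prop:strong1} carries over with minor modification, and the skeleton you describe---pass to the limit in the weighted identity $\rho_ne_n=\tfrac18(\d_x\rho_n)^2+\tfrac12J_n^2+f(\rho_n)\rho_n$, handle the vacuum via the chemical potential relation, upgrade via convergence of norms---is exactly right. One step, however, is not justified: you apply Aubin--Lions to $\sqrt{e_n}$ with ``time regularity inherited from the $H^2$ wave function lifting used in Proposition~\ref{unif2}.'' That lifting provides no such thing. As the paper remarks immediately after Proposition~\ref{unif2}, the wave function $\psi_n(t,\cdot)$ is constructed slice by slice in $t$ and is in general \emph{not} a solution to the Schr\"odinger equation, so it gives no control on $\d_t\psi_n$ and hence none on $\d_t|\d_x\psi_n|^2=\d_t(2e_n-2f(\rho_n))$. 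The bound \eqref{eq:unif2} is purely spatial.

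The fix is that strong convergence of $\sqrt{e_n}$ is not needed as an \emph{input}: run the Proposition~\ref{prop:strong1} argument using only weak convergence $e_n\rightharpoonup\nu$ (immediate, since $\sqrt{e_n}\in L^\infty_tH^1_x$ forces $e_n\in L^\infty_{t,x}$), together with strong convergence of $\rho_n$ and $J_n$, for which Aubin--Lions \emph{is} legitimate because $\d_t\rho_n=-\d_xJ_n\in L^\infty_tL^2_x$ and $\d_tJ_n\in L^\infty_tH^{-1}_{loc}$ from the momentum equation. Your remaining steps then yield strong convergence of $\d_x\sqrt{\rho_n}$ and $\Lambda_n$ in $L^2_{t,x,loc}$, whence $e_n\to e$ strongly in $L^1_{t,x,loc}$; interpolating against the uniform $L^\infty_{t,x}$ bound on $e_n$ upgrades this to $L^p$ and gives $\sqrt{e_n}\to\sqrt{e}=\omega$ a posteriori, identifying $\omega^2$ with $e$ and $\nu$.
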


Finally, we conclude this section by proving Theorem \ref{thm:stab}.

\begin{proof}[Proof of Theorem \ref{thm:stab}]
To finish the proof of Theorem \ref{thm:stab}, we only need to
show that the strong limit $(\sqrt{\rho}, \Lambda)$ obtained in Proposition \ref{prop:strong1} and Proposition \ref{prop:strong2} is indeed a weak solution of the QHD system. This is indeed a direct consequence of the compactness of $\{(\sqrt\rho_n,\Lambda_n)\}$.
\end{proof}
\section{An hydrodynamic definition of \texorpdfstring{$\lambda$}{lambda} and its relations with the chemical potential}\label{sect:lambda}
In what follows we are going to provide some heuristic motivations in order to state a rigorous definition, independent of the Madelung approach and given in the terms of the hydrodynamic variables, of the function $\lambda$ introduced in \eqref{eq:higher}. In the case of hydrodynamic solutions to the QHD system generated by a $H^2$ wave function through the Madelung transform, the functional $I(t)$  is given by 
\begin{equation*}
I(t)=\int|\d_t\psi|^2\,dx.
\end{equation*}
As a consequence of the polar factorization Lemma \ref{lemma:polar} we have $\d_t\sqrt{\rho}=\RE(\bar\phi\d_t\psi)$. If we write $\lambda=-\IM(\bar\phi\d_t\psi)$, then it follows that
\begin{equation}\label{eq:polar_H2}
|\d_t\psi|^2=(\d_t\sqrt{\rho})^2+\lambda^2,\quad\textrm{a.e. in}\;\R.
\end{equation}
Clearly since $\psi\in L^\infty_tH^2_x$, we have $\lambda\in L^\infty_tL^2_x$.
Moreover by using the regularity of $\psi$ we also have that $\lambda=0$ a.e. in the vacuum set $\{\rho=0\}$. 

However it is not possible to use the identity $\lambda=-\IM(\bar\phi\d_t\psi)$ to single out a definition given only in hydrodynamic terms. Indeed in order to exploit the previous identity it would be necessary to use distributional derivatives of the polar factor.
On the other hand, from the Schr\"odinger equation \eqref{eq:NLS_1d}, it is straightforward to define an auxiliary quantity $\xi$ which represents the chemical potential $\mu$, defined in \eqref{eq:chem}, in the quantum probability measure $\rho dx$,
\begin{equation*}
\xi =\rho \mu= \sqrt{\rho}\lambda = - \IM(\bar\psi\d_t\psi) = -\frac14\d_x^2\rho+\frac12(\d_x\sqrt\rho)^2+\frac12\Lambda^2+f'(\rho)\rho.
\end{equation*} 
By assuming $\d_x^2\rho\in L^1_{loc}$ and $\Lambda = 0$ a.e. on $\{\rho = 0\}$, we have that
\begin{equation}\label{eq:xi}
\xi = -\frac14\d_x^2\rho+\frac12(\d_x\sqrt\rho)^2+\frac12\Lambda^2+f'(\rho)\rho
\end{equation}
is a well-defined function $\xi$ in $L^1_{loc}$ and moreover $\xi = 0$ a.e. on $\{\rho = 0\}$. 

Therefore $\xi/\sqrt{\rho}$ belongs to $L^1_{loc}(\sqrt{\rho}dx)$,
so it is possible to define a.e. a function in $L^1_{loc}({\{\rho>0\}})$ and $L^1_{loc}$ in the interior of the vacuum set (but not $L^1_{loc}(\R)$)
\begin{equation*}
\lambda=\left\{\begin{array}{cc}
\frac{\xi}{\sqrt\rho}&\textrm{in }\;\{\rho>0\}\\
0&\textrm{elsewhere},
\end{array}\right.
\end{equation*}
where we set $\lambda = 0$ in the vacuum region to be consistent with the case when the hydrodynamic quantities are given by Madelung transforms.  By using the identity \eqref{eq:xi}, outside the vacuum set, namely on the open set 
$\{\rho>0\},$ we have a.e.
\begin{equation*}\label{eq:lambda}
\lambda=-\frac12\d_{x}^2\sqrt{\rho}
+\frac12\frac{\Lambda^2}{\sqrt{\rho}}
+\sqrt{\rho}f'(\rho),
\end{equation*}
where $\d_x^2\sqrt{\rho}$ is intended as
\[
\d_x^2\sqrt{\rho}=\frac{\d_x^2\rho}{2\sqrt{\rho}}-\frac{(\d_x\sqrt\rho)^2}{\sqrt\rho}.
\] 
This identity allows us to provide the Definition \ref{def:lambda} where $\lambda$ is defined rigorously only in terms of the hydrodynamic variables.

We wish to emphasize that $\d_x^2\sqrt{\rho}$ and $\Lambda^2/\sqrt{\rho}$ may become singular when approaching vacuum boundaries. In what follows, we provide a series of Propositions characterising the properties of $\d_{x}^2\sqrt{\rho}$, $\Lambda^2/\sqrt{\rho}$ for GCP hydrodynamic states, see the Definition \ref{def:GCP}.
More precisely, in what follow we will consider a hydrodynamic state $(\sqrt{\rho}, \Lambda)$ satisfying
\begin{itemize}
\item $\Lambda=0$ a.e. on $\{\rho=0\}$;
\item $\d_x^2\rho, \d_xJ\in L^1_{loc}(\R)$;
\item the following bounds are satisfied
\begin{equation}\label{eq:GCP_bounds}
\begin{aligned}
\|\sqrt{\rho}\|_{H^1(\R)}+\|\Lambda\|_{L^2(\R)}\leq &M_1\\
\|\mathbf{1}_{\{\rho>0\}}\d_x J/\sqrt{\rho}\|_{L^2(\R)}+\|\lambda\|_{L^2(\R)}\leq &M_2,
\end{aligned}
\end{equation}
where $\lambda$ is defined as in \eqref{eq:def_lambda}.
\end{itemize}
The main result here, in the sense of studying the singularity at vacuum boundaries, is stated in Proposition \ref{prop:integrability} and Proposition \ref{prop:lambda} below. More presicely, for the functions $\d_x^2\sqrt{\rho}$ and $\Lambda^2/\sqrt{\rho}$, which are only assumed to be integrable with respect to the density measure, we strengthen their integrability to one in the Lebesgue sense upto vacuum boundaries. As a consequence, it shows that the only possible singularity of a GCP state $(\sqrt\rho,\Lambda)$ is the  jumps of $\d_x\sqrt\rho$ and $\Lambda$ at vacuum boundaries, and $\lambda$ can be extended to a Radon measure 
\[
\tilde\lambda=-\frac12\d_{x}^2\sqrt{\rho}
+\frac12\frac{\Lambda^2}{\sqrt{\rho}}\mathbf{1}_{\{\rho>0\}}
+\sqrt{\rho}f'(\rho) \in \mathcal{D'}(\R),
\]
which takes into account the possible singularities at vacuum boundaries.

By the continuity of $\rho$ the non vacuum set is at most a countable union of disjoint intervals, namely
\begin{equation}\label{eq:nonvac}
V^c=\{x\in\R;\rho(x)> 0\}=\underset{j}{\cup}(a_j,b_j).
\end{equation}
Notice that there are at most two intervals with infinite length. These intervals will be denoted as $(-\infty,b_{-\infty})$ and $(a_\infty,\infty)$ and they will be treated separately if necessary. On any interval outside the vacuum, $(a,b)\subset V^c$, we have the following proposition:

\begin{prop}\label{prop:inftybd}
Let $(\sqrt{\rho}, \Lambda)$ be a GCP state satisfying the bounds \eqref{eq:GCP_bounds}, then we have
\begin{equation}\label{eq:eq4}
\d_x e=2\d_x\sqrt{\rho}(2f'(\sqrt{\rho})\sqrt{\rho})-\lambda)-\Lambda\frac{\d_xJ}{\sqrt{\rho}}\quad in\; \mathcal D'(a, b),
\end{equation}
where $e$ denotes the total energy density given by \eqref{eq:en_dens}.
Moreover, 
\begin{equation}\label{eq:nonvacbd}
\begin{aligned}
&\|\d_x\sqrt{e}\|_{L^2(a,b)}\le  C(M_1,M_2),\\
&\|\sqrt{e}\|_{L^\infty(a,b)}\le  C(M_1,M_2)(1+|b-a|^{-\frac12}).
\end{aligned}
\end{equation}
\end{prop}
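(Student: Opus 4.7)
The plan is to derive the identity \eqref{eq:eq4} by a direct distributional chain-rule computation valid on $(a,b)$, exploiting the fact that $\sqrt{\rho}$ is continuous and strictly positive on $(a,b)$, hence locally bounded below on every compact subinterval. I would first write
\[
\d_x e = (\d_x\sqrt{\rho})(\d_x^2\sqrt{\rho}) + \Lambda\,\d_x\Lambda + 2f'(\rho)\sqrt{\rho}\,\d_x\sqrt{\rho},
\]
where $\d_x^2\sqrt{\rho}$ and $\d_x\Lambda$ make sense in $L^1_{loc}(a,b)$ thanks to the assumptions $\d_x^2\rho,\d_xJ\in L^1_{loc}(\R)$ combined with the local lower bound on $\sqrt{\rho}$. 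Next I would eliminate $\d_x^2\sqrt{\rho}$ using the pointwise identity
\[
\d_x^2\sqrt{\rho} = \frac{\Lambda^2}{\sqrt{\rho}} + 2f'(\rho)\sqrt{\rho} - 2\lambda,
\]
which is precisely the content of Definition \ref{def:lambda} on $\{\rho>0\}$, and rewrite $\Lambda\,\d_x\Lambda$ through the identity $J=\sqrt{\rho}\Lambda$ as $\Lambda\,\d_xJ/\sqrt{\rho} - (\Lambda^2/\sqrt{\rho})\d_x\sqrt{\rho}$. Substituting and collecting terms produces \eqref{eq:eq4} in $\mathcal{D}'(a,b)$.

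For the $L^2$ control of $\d_x\sqrt{e}$, the crucial pointwise observation is that $(\d_x\sqrt{\rho})^2\le 2e$ and $\Lambda^2\le 2e$, so $|\d_x\sqrt\rho|,|\Lambda|\le\sqrt{2e}$ and therefore $\d_x\sqrt{e}=\d_x e/(2\sqrt{e})$ satisfies the pointwise bound
\[
|\d_x\sqrt{e}|\ \lesssim\ |2f'(\rho)\sqrt{\rho}-\lambda|\ +\ \mathbf{1}_{\{\rho>0\}}\frac{|\d_xJ|}{\sqrt{\rho}}.
\]
The last two terms are directly controlled in $L^2$ by $M_2$, while $\|f'(\rho)\sqrt{\rho}\|_{L^2}^2=\int\rho^{2\gamma-1}\,dx\le \|\sqrt{\rho}\|_{L^\infty}^{4\gamma-4}\|\sqrt{\rho}\|_{L^2}^2$ is controlled by $M_1$ via the Sobolev embedding $H^1(\R)\hookrightarrow L^\infty(\R)$. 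This produces the first bound in \eqref{eq:nonvacbd} with a constant independent of $(a,b)$.

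The $L^\infty$ estimate then follows by a standard fundamental-theorem-of-calculus argument. For any $x\in(a,b)$ I would write
\[
\sqrt{e}(x)^2 = \sqrt{e}(y)^2 + 2\int_y^x \sqrt{e}\,\d_x\sqrt{e}\,ds
\]
and average over $y\in(a,b)$. The averaged first term is bounded by $(b-a)^{-1}\|\sqrt{e}\|_{L^2(a,b)}^2\le E/(b-a)\le C(M_1)/(b-a)$, while the integral term is handled by Cauchy--Schwarz: $2\|\sqrt{e}\|_{L^2(a,b)}\|\d_x\sqrt{e}\|_{L^2(a,b)}\le C(M_1,M_2)$. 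Taking square roots and using $\sqrt{\alpha+\beta}\le\sqrt{\alpha}+\sqrt{\beta}$ yields the claimed bound $C(M_1,M_2)(1+|b-a|^{-1/2})$.

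The delicate point I expect to require care is the rigorous justification of the chain-rule computation at the first step, since $\d_x^2\sqrt{\rho}$ and $\Lambda^2/\sqrt{\rho}$ may be singular near the endpoints of $(a,b)$, which are genuine vacuum points. The standard workaround is to establish \eqref{eq:eq4} first on an arbitrary compact subinterval $[\alpha,\beta]\subset(a,b)$, where continuity of $\rho$ gives a uniform lower bound and all manipulations are classical, and then to observe that both sides define distributions on $(a,b)$ coinciding on every such $[\alpha,\beta]$. Since every test function in $\mathcal{D}(a,b)$ is compactly supported inside some such subinterval, the identity extends to all of $\mathcal{D}'(a,b)$.
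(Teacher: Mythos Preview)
Your proposal is correct and follows essentially the same route as the paper: a direct chain-rule computation of $\d_x e$ on $(a,b)$ using the definition of $\lambda$ and the product rule for $J=\sqrt{\rho}\Lambda$, followed by the pointwise bounds $|\d_x\sqrt{\rho}|,|\Lambda|\le\sqrt{2e}$ to control $\d_x\sqrt{e}$ in $L^2$, and finally a fundamental-theorem-of-calculus / averaging argument for the $L^\infty$ bound. The only cosmetic differences are that the paper inserts an $\eps$-regularization $\sqrt{e+\eps}$ before passing to the limit (which you can omit since $e\ge f(\rho)>0$ on $(a,b)$), and the paper picks a near-infimum point $x_0$ where you average over $y$; both yield the same estimate.
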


\begin{proof}
Let us consider the interval $(a,b)\subset V^c$, then
\[
\d_x^2\sqrt{\rho}=(\frac{1}{2}\d_x^2\rho-(\d_x\sqrt{\rho})^2)/\sqrt{\rho}\quad in\; L^1_{loc}(a, b),
\]
and 
\[
\d_x\Lambda=(\d_x J-\Lambda\d_x\sqrt{\rho})/\sqrt{\rho}\quad in\; L^1_{loc}(a, b).
\]
As a consequence $\d_x\sqrt{\rho}, \Lambda\in \mathcal C(a, b)$. Then by using the definitions \eqref{eq:en_dens} and \eqref{eq:def_lambda} we have the following identities
\begin{align*}
\d_x e
=&\d_x\sqrt{\rho}\d^2_x\sqrt{\rho}+\Lambda\d_x\Lambda+f'(\rho)\d_x\rho\\
=&\d_x\sqrt{\rho}(\d_x^2\sqrt{\rho}-\frac{\Lambda^2}{\sqrt{\rho}}+2f'(\rho)\sqrt{\rho})+\d_x\sqrt{\rho}\frac{\Lambda^2}{\sqrt{\rho}}+\Lambda\d_x\Lambda\\
=&2\d_x\sqrt{\rho}(2f'(\rho)\sqrt{\rho}-\lambda)+\Lambda\frac{\d_x J}{\sqrt{\rho}}.
\end{align*}

To show the first inequality of \eqref{eq:nonvacbd}, we divide both side of \eqref{eq:eq4} by $2\sqrt{e+\epsilon}$, with $\epsilon >0$. We then obtain
\begin{align*}
\d_x \sqrt{e+\epsilon}
&=\frac{\d_x e}{2\sqrt{e+\epsilon}}\\
&=\frac{\d_x\sqrt{\rho}}{\sqrt{e+\epsilon}}(2f'(\rho)\sqrt{\rho}-\lambda)
-\frac{\Lambda}{\sqrt{e+\epsilon}}\frac{\d_xJ}{2\sqrt{\rho}}.
\end{align*}
By taking the $L^2_x$ norm over $(a,b)$, and using the bounds \eqref{eq:GCP_bounds} we get 
\begin{align*}
\|\d_x\sqrt{e+\epsilon}\|_{L^2(a,b)}\le& \|\frac{\d_x\sqrt{\rho}}{\sqrt{e+\epsilon}}\|_{L^\infty(a,b)}\|2f'(\rho)\sqrt{\rho}-\lambda\|_{L^2(a_j,b_j)}\\
&+ \|\frac{\Lambda}{\sqrt{e+\epsilon}}\|_{L^\infty(a,b)}\|\d_t\sqrt{\rho}\|_{L^2(a,b)}\\
\le & 4 \|f'(\sqrt{\rho})\sqrt{\rho}\|_{L^2}+2\|\lambda\|_{L^2}+2\|\d_t\sqrt{\rho}\|_{L^2}\\
\le & C(M_1,M_2).
\end{align*}
Passing to the limit as $\epsilon\to 0$, it follows that
\[
\|\d_x\sqrt{e}\|_{L^2(a,b)}\le C(M_1,M_2).
\]
Last by choosing $x_0\in[a,b]$ such that $e(x_0)\le 2\, \underset{x\in(a,b)}{\inf}\, e(x)$, we can directly compute that
\begin{align*}
\|\sqrt{e}\|_{L^\infty(a,b)}^2=&\|e\|_{L^\infty(a,b)}\leq e(x_0)+\int_a^b \d_x e(x)\,dx\\
\leq & 2\, \underset{x}{\inf}\, e(x)+2\|\sqrt{e}\|_{L^2(a,b)}\|\d_x\sqrt{e}\|_{L^2(a,b)}\\
\leq & 2 |b-a|^{-1} \|e\|_{L^1(a,b)}+2\|e\|_{L^1(a,b)}^\frac12\|\d_x\sqrt{e}\|_{L^2(a,b)}\\
\leq & C(M_1,M_2)(1+|b-a|^{-1}).
\end{align*}
\end{proof}

In the next Proposition we will prove that $\d_x^2\sqrt{\rho}$ and $\frac{\Lambda^2}{\sqrt{\rho}}$ are integrable on any open interval $(a,b)$ outside the vacuum, including the case when $a$ and $b$ are at vacuum boundaries.
\begin{prop}\label{prop:integrability}
Let $(\sqrt{\rho}, \Lambda)$ be a GCP state satisfying the bounds \eqref{eq:GCP_bounds}. Then on any interval $(a,b)\subset V^c$ with finite length, we have $\d_x^2\sqrt{\rho}\in L^1(a,b)$, $\frac{\Lambda^2}{\sqrt{\rho}}\in L^1(a,b)$, and the following bound holds
\[
\|\d_x^2\sqrt{\rho}\|_{L^1(a,b)}+\|\frac{\Lambda^2}{\sqrt{\rho}}\|_{L^1(a,b)}\le C(M_1,M_2)(1+|b-a|^{\frac12}+|b-a|^{-\frac12}).
\]
\end{prop}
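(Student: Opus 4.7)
My plan is to exploit the identity defining $\lambda$ together with the pointwise nonnegativity of $\Lambda^{2}/\sqrt{\rho}$ and the $L^{\infty}$ control of $\d_{x}\sqrt{\rho}$ already furnished by Proposition~\ref{prop:inftybd}. The key observation is that although only the \emph{difference} of the two singular terms is directly controlled (by $\lambda\in L^{2}$), the one-sidedness of $\Lambda^{2}/\sqrt{\rho}$ is enough to separate them.

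First, I would rewrite \eqref{eq:def_lambda} on the interval $(a,b)\subset V^{c}$ as
\begin{equation*}
\frac{\Lambda^{2}}{\sqrt{\rho}}-\d_{x}^{2}\sqrt{\rho}=2\lambda-2f'(\rho)\sqrt{\rho}=:h,
\end{equation*}
and observe that, using \eqref{eq:GCP_bounds} and the Sobolev embedding $H^{1}(\R)\hookrightarrow L^{\infty}(\R)$, one has $\|h\|_{L^{2}(a,b)}\leq C(M_{1},M_{2})$. Since $\Lambda^{2}/\sqrt{\rho}\geq 0$, the identity gives $\d_{x}^{2}\sqrt{\rho}\geq -h$ pointwise a.e., so $(\d_{x}^{2}\sqrt{\rho})_{-}\leq h_{+}$, and by Cauchy--Schwarz
\begin{equation*}
\|(\d_{x}^{2}\sqrt{\rho})_{-}\|_{L^{1}(a,b)}\leq\|h\|_{L^{2}(a,b)}|b-a|^{1/2}\leq C(M_{1},M_{2})|b-a|^{1/2}.
\end{equation*}

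The next step controls the positive part. On every compact subinterval $[a+\eps,b-\eps]$ the density is bounded below, hence $\d_{x}\sqrt{\rho}\in W^{1,1}$ and the fundamental theorem of calculus yields
\begin{equation*}
\int_{a+\eps}^{b-\eps}\d_{x}^{2}\sqrt{\rho}\,dx=\d_{x}\sqrt{\rho}(b-\eps)-\d_{x}\sqrt{\rho}(a+\eps).
\end{equation*}
Combining $(\d_{x}\sqrt{\rho})^{2}\leq 2e$ with the second estimate in \eqref{eq:nonvacbd} gives $\|\d_{x}\sqrt{\rho}\|_{L^{\infty}(a,b)}\leq C(M_{1},M_{2})(1+|b-a|^{-1/2})$. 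Writing $(\d_{x}^{2}\sqrt{\rho})_{+}=\d_{x}^{2}\sqrt{\rho}+(\d_{x}^{2}\sqrt{\rho})_{-}$, the two bounds combine uniformly in $\eps$ and monotone convergence as $\eps\to 0^{+}$ yields
\begin{equation*}
\|\d_{x}^{2}\sqrt{\rho}\|_{L^{1}(a,b)}\leq C(M_{1},M_{2})\bigl(1+|b-a|^{1/2}+|b-a|^{-1/2}\bigr).
\end{equation*}
The bound for $\Lambda^{2}/\sqrt{\rho}$ is then immediate: the identity $\Lambda^{2}/\sqrt{\rho}=\d_{x}^{2}\sqrt{\rho}+h$ together with $\|h\|_{L^{1}(a,b)}\leq C(M_{1},M_{2})|b-a|^{1/2}$ transfers the same bound.

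The main obstacle I anticipate is the possibility that each of $\d_{x}^{2}\sqrt{\rho}$ and $\Lambda^{2}/\sqrt{\rho}$ diverges near the vacuum endpoints $a,b$, even though their difference remains square-integrable. The device that unlocks the proof is the pointwise sign $\Lambda^{2}/\sqrt{\rho}\geq 0$, which turns the $L^{2}$ identity into a one-sided bound on $(\d_{x}^{2}\sqrt{\rho})_{-}$; once this is in hand, the boundary values of $\d_{x}\sqrt{\rho}$—which live in $L^{\infty}$ through the $\sqrt{e}$ estimate of Proposition~\ref{prop:inftybd}—control the signed integral and hence $(\d_{x}^{2}\sqrt{\rho})_{+}$, producing the explicit $|b-a|^{\pm 1/2}$ dependence in the final estimate.
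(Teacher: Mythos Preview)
Your proof is correct and rests on the same two pillars as the paper's argument: the nonnegativity of $\Lambda^{2}/\sqrt{\rho}$ and the $L^{\infty}(a,b)$ control on $\d_{x}\sqrt{\rho}$ coming from Proposition~\ref{prop:inftybd}. The organization differs slightly. The paper multiplies the identity by a smooth cutoff $\eta_{\eps}\in C^{\infty}_{c}(a,b)$, integrates by parts so that $\int\eta_{\eps}\d_{x}^{2}\sqrt{\rho}=-\int\eta'_{\eps}\d_{x}\sqrt{\rho}$ is controlled by $\|\d_{x}\sqrt{\rho}\|_{L^{\infty}}$, and then uses $\eta_{\eps}\ge 0$ together with $\Lambda^{2}/\sqrt{\rho}\ge 0$ to bound $\int_{a+2\eps}^{b-2\eps}\Lambda^{2}/\sqrt{\rho}$ uniformly in $\eps$; Fatou's lemma closes the argument and the bound on $\d_{x}^{2}\sqrt{\rho}$ follows from the identity. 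You instead split $\d_{x}^{2}\sqrt{\rho}$ into its positive and negative parts, use the sign of $\Lambda^{2}/\sqrt{\rho}$ pointwise to bound $(\d_{x}^{2}\sqrt{\rho})_{-}$, and invoke the fundamental theorem of calculus on $[a+\eps,b-\eps]$ to control the signed integral and hence $(\d_{x}^{2}\sqrt{\rho})_{+}$. Your packaging is arguably more transparent, as it isolates exactly where each estimate enters, while the paper's cutoff argument is the standard distributional device; both yield the same $|b-a|^{\pm 1/2}$ dependence for the same reason.
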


\begin{proof}
We first take a function $\eta(x)$ on $[0,2]$ such that $0\le\eta(x)\le 1$, $\eta(x)\equiv 0$ for $x\in[0,\frac14]$, $\eta(x)\equiv 1$ for $x\in[\frac74,2]$ and $|\eta'(x)|\le 1$ ($\eta$ is easy to construct by mollifying a piecewise linear function). For $0<\epsilon<|b-a|/4$, we define 
\[
\eta_\epsilon(x)=\left\{\begin{array}{ll}
\eta\left(\frac{x-a}{\epsilon}\right) & x\in(a,a+2\epsilon]\\
1 & x\in [a+2\epsilon,b-2\epsilon]\\
\eta\left(\frac{b-x}{\epsilon}\right) & x\in [b-2\epsilon,b)\\
\end{array}
\right.\in C^\infty_c(a,b).
\]
Then $\eta'_\epsilon$ is supported in $(a,a+2\epsilon)$ and $(b-2\epsilon,b)$, and $|\eta'_\epsilon(x)|\le \frac{1}{\epsilon}$.

Multiplying \eqref{eq:def_lambda} by $2\eta_\epsilon$ and integrating over $(a,b)$, we obtain 
\begin{align}\label{eq:eq5}
\int_{a}^{b}\eta_\epsilon\frac{\Lambda^2}{\sqrt{\rho}}dx=2\int_{a}^{b}\eta_\epsilon\lambda\,dx+\int_{a}^{b}\eta_\epsilon\d_x^2\sqrt\rho\,dx-2\int_{a}^{b}\eta_\epsilon f'(\rho)\sqrt\rho\,dx.
\end{align}
In the right hand side of \eqref{eq:eq5}, the first and the third integrals can be controlled as 
\begin{align*}
&|\int_{a}^{b}\eta_\epsilon\lambda\,dx|\le \|\eta_\epsilon\|_{L^2(a,b)}\|\lambda\|_{L^2(\R)}\le C(M_2)|b-a|^\frac12,\\
&|\int_{a}^{b}\eta_\epsilon f'(\rho)\sqrt\rho\,dx|\le  \|f'(\rho)\sqrt\rho\|_{L^1(\R)}\le C(M_1).
\end{align*}
For the second integral in the right hand side of \eqref{eq:eq5}, applying integration by parts and using $\eta_\epsilon\in C_c^\infty(a,b)$, we obtain
\begin{align}\label{eq:eq6}
\int_{a}^{b}\eta_\epsilon(x)\d_x^2\sqrt\rho\,dx=\int_{a}^{b}\eta'_\epsilon(x)\d_x\sqrt\rho\,dx.
\end{align}
By our construction and Proposition \ref{prop:inftybd}, \eqref{eq:eq6} is bounded by 
\begin{align*}
\left|\eqref{eq:eq6}\right|\le & \frac{|supp(\eta'_\epsilon)|}{\epsilon}\|\d_x\sqrt{\rho}\|_{L^\infty_x(a,b)}\\
\le & 4\sqrt{2}\|\sqrt{e}\|_{L^\infty(a,b)}\leq C(M_1,M_2)(1+|b-a|^{-\frac12}).
\end{align*}
Since $\eta_\epsilon$ and $\frac{\Lambda^2}{\sqrt{\rho}}$ are non-negative functions, we obtain by the estimate of \eqref{eq:eq5} that 
\begin{align*}
\int_{a+2\epsilon}^{b-2\epsilon}\frac{\Lambda^2}{\sqrt{\rho}}dx\le& \int_{a}^{b}\eta_\epsilon\frac{\Lambda^2}{\sqrt{\rho}}dx\\
\le& C(M_1,M_2)(1+|b-a|^{\frac12}+|b-a|^{-\frac12}).
\end{align*}
Passing to the limit as $\epsilon\to 0$ by Fatou's Lemma, we conclude 
\[
\int_{a}^{b}\frac{\Lambda^2}{\sqrt{\rho}}dx\le C(M_1,M_2)(1+|b-a|^{\frac12}+|b-a|^{-\frac12}).
\]
To obtain the integrability of $\d_x^2\sqrt\rho$ we simply notice that by definition of $\lambda$, in the interval $(a,b)$ we have
\[
\d_x^2\sqrt\rho=\frac{\Lambda^2}{\sqrt{\rho}}-2\lambda+f'(\rho)\sqrt{\rho}.
\]
Therefore $\|\d_x^2\sqrt\rho\|_{L^1(a,b)}$ can be controlled by
\begin{align*}
\|\d_x^2\sqrt{\rho}\|_{L^1(a,b)}\le& \|\frac{\Lambda^2}{\sqrt\rho}\|_{L^1(a,b)}+|b-a|^\frac12\|\lambda\|_{L^2_x}+\|f'(\rho)\sqrt{\rho}\|_{L^1_x}\\
\le & C(M_1,M_2)(1+|b-a|^{\frac12}+|b-a|^{-\frac12}).
\end{align*}
\end{proof}

As a consequence of Proposition \ref{prop:inftybd} and Proposition \ref{prop:integrability}, the local integrability of $\d_x\sqrt{e}$ and $\d_x^2\sqrt{\rho}$ imply $\sqrt{e}$ and $\d_x\sqrt{\rho}$ are absolutely continuous functions, therefore it can be continuously extended to the vacuum boundary. More precisely we have the following Corollary (where $\mathcal{AC}$ denotes the space of Absolutely Continuous functions)

\begin{corollary}
For all connected components $(a_j,b_j)\subset V^c$ as given in \eqref{eq:nonvac} with finite length, we have $\sqrt{e}\in \mathcal{AC}(a_j,b_j)$ and $\d_x\sqrt{\rho}\in \mathcal{AC}(a_j,b_j)$, therefore we can continuously extend $\sqrt{e}$ and $\d_x\sqrt{\rho}$ to functions in $\mathcal{C}[a_j,b_j]$.

For the semi-infinite interval $(-\infty,b_{-\infty})$ and $(a_\infty,\infty)$, $\sqrt{e}$ and $\d_x\sqrt{\rho}$ belong to $\mathcal{AC}[-R,b_{-\infty}]$ or $\mathcal{AC}[a_\infty,R]$ respectively, for any finite $R>|a_\infty|,|b_{-\infty}|$.
\end{corollary}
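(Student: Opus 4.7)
The plan is to read off the Corollary as a direct consequence of Propositions \ref{prop:inftybd} and \ref{prop:integrability}, using only the elementary fact that a function whose distributional derivative is integrable on a bounded interval admits an absolutely continuous representative which extends continuously to the closure.

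First I would fix a connected component $(a_j,b_j)\subset V^c$ of finite length. By Proposition \ref{prop:inftybd} we have $\partial_x\sqrt{e}\in L^2(a_j,b_j)$ with norm bounded by $C(M_1,M_2)$, and since $|b_j-a_j|<\infty$ the embedding $L^2(a_j,b_j)\hookrightarrow L^1(a_j,b_j)$ yields $\partial_x\sqrt{e}\in L^1(a_j,b_j)$. Similarly, Proposition \ref{prop:integrability} gives $\partial_x^2\sqrt{\rho}\in L^1(a_j,b_j)$ directly, with the explicit bound $C(M_1,M_2)(1+|b_j-a_j|^{1/2}+|b_j-a_j|^{-1/2})$. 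Moreover, the $L^\infty$ bound in \eqref{eq:nonvacbd} implies $\sqrt{e}\in L^\infty(a_j,b_j)$, and on the interior of $(a_j,b_j)$ we already know $\sqrt{\rho}$ is smooth enough so that $\partial_x\sqrt{\rho}\in \mathcal{C}(a_j,b_j)$ (as noted at the beginning of the proof of Proposition \ref{prop:inftybd}, where continuity of $\partial_x\sqrt{\rho}$ and $\Lambda$ on $(a_j,b_j)$ is established from $\partial_x^2\rho\in L^1_{loc}$).

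Next I would invoke the standard characterization: a real (or complex) valued function on a bounded interval $(a,b)$ whose distributional derivative lies in $L^1(a,b)$ coincides almost everywhere with an absolutely continuous function on $[a,b]$, and is therefore uniformly continuous; consequently it admits finite one-sided limits at the endpoints. Applying this to $\sqrt{e}$ (with derivative $\partial_x\sqrt{e}\in L^1(a_j,b_j)$) gives $\sqrt{e}\in \mathcal{AC}(a_j,b_j)$, and applying it to $\partial_x\sqrt{\rho}$ (with derivative $\partial_x^2\sqrt{\rho}\in L^1(a_j,b_j)$) gives $\partial_x\sqrt{\rho}\in \mathcal{AC}(a_j,b_j)$. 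Both can then be extended continuously to the compact interval $[a_j,b_j]$, i.e.\ belong to $\mathcal{C}[a_j,b_j]$.

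Finally, for the semi-infinite components $(-\infty,b_{-\infty})$ or $(a_\infty,\infty)$, I would simply apply the argument above to an arbitrary compact subinterval $[-R,b_{-\infty}]$ or $[a_\infty,R]$ with $R>\max\{|a_\infty|,|b_{-\infty}|\}$; the bounds in Propositions \ref{prop:inftybd} and \ref{prop:integrability} still apply on these intervals (with $|b-a|$ replaced by $R+|b_{-\infty}|$ or $R-a_\infty$), yielding $\sqrt{e},\partial_x\sqrt{\rho}\in \mathcal{AC}[-R,b_{-\infty}]$ (respectively $\mathcal{AC}[a_\infty,R]$). There is essentially no obstacle here: the entire substance of the Corollary is carried by the two preceding Propositions, and the only content added is the observation that $L^1$-integrability of the derivative is precisely the definition of absolute continuity, which upgrades integrability at the vacuum boundary to a continuous extension.
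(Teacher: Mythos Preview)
Your proposal is correct and follows essentially the same approach as the paper: both deduce absolute continuity of $\sqrt{e}$ and $\partial_x\sqrt{\rho}$ on each component directly from the $L^1$-integrability of their derivatives established in Propositions \ref{prop:inftybd} and \ref{prop:integrability}, and then extend continuously to the closed interval via the standard characterization of absolutely continuous functions. The paper's proof is slightly more explicit in writing out the endpoint values as integrals from a fixed interior point $x_j$, but this is exactly the mechanism you invoke abstractly.
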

\begin{proof}
The statement of absolute continuity directly comes from the integrability of $\d_x\sqrt{e}$ and $\d_x^2\sqrt{\rho}$ on $(a_j,b_j)$. Moreover we can directly define the continuous boundary value from right as 
\begin{align*}
\sqrt{e}(t,a_j^+)=&\sqrt{e}(x_j)+\int_{x_j}^{a_j}\d_x\sqrt{e}(y)dy,\\
\d_x\sqrt{\rho}(t,a_j^+)=&\d_x\sqrt{\rho}(x_j)+\int_{x_j}^{a_j} \d_x^2\sqrt{\rho}(y)dy,
\end{align*}
where $x_j\in (a_j,b_j)$ is a fixed point. Similarly we can define the left boundary value $\sqrt{e}(t,b_j^-)$ and $\d_x\sqrt{\rho}(t,b_j^-)$.

Same argument applies to intervals $(-R,b_{-\infty})$ and $(a_\infty,R)$.
\end{proof}

Now we come back to the definition \eqref{eq:def_lambda} of $\lambda$. In the right hand side of \eqref{eq:def_lambda} we proved that $\d_x^2\sqrt{\rho}$ and $\Lambda^2/\sqrt{\rho}$ belongs to $L^1(a_j,b_j)$ for all components $(a_j,b_j)\subset V^c$. Therefore the only possible singularities are given by the jumps of $\d_x\sqrt{\rho}$ at the vacuum boundaries, which leads to a Dirac-delta in $\d_x^2\sqrt{\rho}$ in the sense of distribution, and it should be balanced to make $\lambda$ a function in $L^2_x(\R)$. Hence in the case that the vacuum boundaries has no accumulation point, we can extend $\lambda$ to a Radon measure on $\R$. We conclude it by the next Proposition.

\begin{prop}\label{prop:lambda}
Let $(\sqrt{\rho}, \Lambda)$ be a GCP state satisfying the bounds \eqref{eq:GCP_bounds} and let us assume that the vacuum boundaries $\underset{j}{\cup}\{a_j,b_j\}$ given in \eqref{eq:nonvac} have no accumulation point.
Then we have that 
\begin{equation*}
\tilde\lambda=-\frac12\d_x^2\sqrt{\rho}+\frac{\Lambda^2}{\sqrt\rho}\mathbf{1}_{\{\rho>0\}}+f'(\rho)\sqrt{\rho}
\end{equation*}
is a Radon measure in $\mathcal{M}(\R) $ and $\d_x^2\sqrt{\rho}$ is a distribution in $H^{-1}(\R)$. 
\\ Therefore it follows that the measure $\lambda dx$ extends to the Radon measure $\tilde\lambda$ in the following way
\begin{equation}\label{eq:tildelambda}
\tilde\lambda=\lambda\,dx+\frac12\underset{j}{\sum}\left(\d_x\sqrt{\rho}(b_j^-)\delta_{b_j}-\d_x\sqrt{\rho}(a_j^+)\delta_{a_j}\right),
\end{equation} 
where $\delta_{\cdot}$ is the Dirac-delta distribution.
\end{prop}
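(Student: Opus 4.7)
The plan is to compute $\d_x^2\sqrt{\rho}$ as a distribution on $\R$ by testing against $\varphi\in C_c^\infty(\R)$ and exploiting the no-accumulation hypothesis, which guarantees that $\mathrm{supp}(\varphi)$ meets only finitely many connected components $(a_j,b_j)$ of $V^c$. Since $\sqrt{\rho}\in H^1(\R)\subset L^2(\R)$, in particular $\d_x\sqrt{\rho}\in L^2(\R)$, we already know $\d_x^2\sqrt{\rho}\in H^{-1}(\R)$, which settles the second assertion.

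Next I would unfold $\langle\d_x^2\sqrt{\rho},\varphi\rangle=\int_\R \sqrt{\rho}\,\d_x^2\varphi\,dx$ and split the integral over the finitely many intervals $(a_j,b_j)$ meeting $\mathrm{supp}(\varphi)$ (recall that $\d_x\sqrt{\rho}=0$ a.e.\ on $V$ by Lemma \ref{lemma:LL}). On each such interval a first integration by parts eliminates the boundary contributions because $\sqrt{\rho}(a_j)=\sqrt{\rho}(b_j)=0$, giving $-\int_{a_j}^{b_j}\d_x\sqrt{\rho}\,\d_x\varphi\,dx$. By Proposition \ref{prop:integrability} we have $\d_x^2\sqrt{\rho}\in L^1(a_j,b_j)$, hence $\d_x\sqrt{\rho}\in \mathcal{AC}[a_j,b_j]$ with well-defined traces $\d_x\sqrt{\rho}(a_j^+)$, $\d_x\sqrt{\rho}(b_j^-)$, so a second integration by parts is legitimate and produces
\[
\int_{a_j}^{b_j}(\d_x^2\sqrt{\rho})\varphi\,dx+\d_x\sqrt{\rho}(a_j^+)\varphi(a_j)-\d_x\sqrt{\rho}(b_j^-)\varphi(b_j).
\]
Summing over $j$ and denoting by $(\d_x^2\sqrt{\rho})_{ac}$ the a.c.\ part of $\d_x^2\sqrt{\rho}$ on $V^c$, one obtains the distributional identity
\[
\d_x^2\sqrt{\rho}=(\d_x^2\sqrt{\rho})_{ac}\mathbf{1}_{\{\rho>0\}}+\sum_j\bigl(\d_x\sqrt{\rho}(a_j^+)\delta_{a_j}-\d_x\sqrt{\rho}(b_j^-)\delta_{b_j}\bigr).
\]

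Plugging this into the defining expression of $\tilde\lambda$ and recalling Definition \ref{def:lambda}, the a.c.\ parts recombine as
\[
-\tfrac12(\d_x^2\sqrt{\rho})_{ac}\mathbf{1}_{\{\rho>0\}}+\tfrac12\tfrac{\Lambda^2}{\sqrt{\rho}}\mathbf{1}_{\{\rho>0\}}+f'(\rho)\sqrt{\rho}=\lambda\quad\text{a.e.},
\]
(with $\lambda=0$ on $\{\rho=0\}$), so that
\[
\tilde\lambda=\lambda\,dx+\tfrac12\sum_j\bigl(\d_x\sqrt{\rho}(b_j^-)\delta_{b_j}-\d_x\sqrt{\rho}(a_j^+)\delta_{a_j}\bigr),
\]
which is exactly \eqref{eq:tildelambda}. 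Since $\lambda\in L^2(\R)$ and the sum of Dirac masses is locally finite (by the no-accumulation hypothesis), $\tilde\lambda\in\mathcal M(\R)$.

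The only delicate point, and thus the expected main obstacle, is justifying the second integration by parts up to the vacuum boundary: this is precisely where the integrability $\d_x^2\sqrt{\rho}\in L^1(a_j,b_j)$ from Proposition \ref{prop:integrability} (and hence the absolute continuity of $\d_x\sqrt{\rho}$ on $[a_j,b_j]$, giving well-defined one-sided traces) is essential. The no-accumulation assumption is used exactly to guarantee that the Dirac-mass sum in \eqref{eq:tildelambda} is a locally finite measure; without it, the jumps could pile up and only a weaker characterization would survive.
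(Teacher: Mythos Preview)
Your proof is correct and follows essentially the same approach as the paper's: both compute the distributional $\d_x^2\sqrt{\rho}$ by integrating by parts on each connected component $(a_j,b_j)$ of $V^c$, using the $L^1(a_j,b_j)$ integrability of $\d_x^2\sqrt{\rho}$ from Proposition \ref{prop:integrability} to obtain well-defined one-sided traces of $\d_x\sqrt{\rho}$ at the vacuum boundaries. The only organizational difference is that the paper starts from $\int_\R\eta\,\lambda\,dx$ and does a single integration by parts on the $\d_x^2\sqrt{\rho}$ term (then invokes $\d_x\sqrt{\rho}=0$ a.e.\ on $V$ to extend $\int_{V^c}\d_x\eta\,\d_x\sqrt{\rho}$ to all of $\R$), whereas you start from $\langle\d_x^2\sqrt{\rho},\varphi\rangle=\int_\R\sqrt{\rho}\,\d_x^2\varphi$ and integrate by parts twice; the core computation and the use of the no-accumulation hypothesis are identical.
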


\begin{proof}
We only need to prove the identity \eqref{eq:tildelambda}. Let $\eta\in\mathcal{C}^\infty_c(\R)$ be any test function, by \eqref{eq:def_lambda} 
\begin{align*}
\int_\R \eta\,\lambda\,dx=&\int_{V^c}\eta\,\lambda\,dx\\
=&\underset{j}{\sum}\int_{a_j}^{b_j}\eta\left(-\frac12\d_x^2\sqrt\rho+\frac12\frac{\Lambda^2}{\sqrt\rho}+f'(\rho)\sqrt{\rho}\right)dx.
\end{align*}
The integrals in the right hand side of the last identity are well defined by Proposition \ref{prop:integrability}, and the summation is taken over finite $j$ since $\{a_j\}$, $\{b_j\}$ have no accumulation point and the support of $\eta$ is compact. By using $\d_x\sqrt{\rho}\in \mathcal{C}[a_j,b_j]$ and integration by parts we get 
\begin{align*}
-\frac12\int_{a_j}^{b_j}\eta\d_x^2\sqrt\rho\,dx=-\frac12\d_x\sqrt\rho(b_j^-)\eta(b_j)+\frac12\d_x\sqrt\rho(a_j^+)\eta(a_j)+\frac12\int_{a_j}^{b_j}\d_x\eta\d_x\sqrt\rho\,dx.
\end{align*}
Summing over $j$ implies 
\begin{align*}
-\frac12\underset{j}{\sum}\int_{a_j}^{b_j}\eta\d_x^2\sqrt\rho\,dx=\int_{V^c}\d_x\eta\d_x\sqrt\rho\,dx+\frac12\underset{j}{\sum}\left(\d_x\sqrt\rho(a_j^+)\eta(a_j)-\d_x\sqrt\rho(b_j^-)\eta(b_j)\right).
\end{align*}
By Lemma \ref{lemma:LL} we have $\d_x\sqrt{\rho}=0$ a.e. on $V=\{\rho=0\}$, thus 
\[
\int_{V^c}\d_x\eta\d_x\sqrt\rho\,dx=\int_{\R}\d_x\eta\d_x\sqrt\rho\,dx=-\int_{\R}\eta\d_x^2\sqrt\rho\,dx.
\]
At the same time we also have 
\begin{align*}
\underset{j}{\sum}\int_{a_j}^{b_j}\eta\frac{\Lambda^2}{\sqrt{\rho}}\,dx=&\int_{\R}\eta\frac{\Lambda^2}{\sqrt\rho}\mathbf{1}_{\{\rho>0\}}dx,\\
\underset{j}{\sum}\int_{a_j}^{b_j}\eta\,f'(\rho)\sqrt{\rho}\,dx=&\int_{\R}\eta\,f'(\rho)\sqrt{\rho}\,dx.
\end{align*}
Therefore we conclude 
\begin{align*}
\int_{\R}\eta\,\lambda\,dx=&\int_{\R}\eta\left(-\frac12\d_x^2\sqrt\rho+\frac12\frac{\Lambda^2}{\sqrt\rho}\mathbf{1}_{\{\rho>0\}}+f'(\rho)\sqrt{\rho}\right)dx\\
&+\frac12\underset{j}{\sum}\left(\d_x\sqrt\rho(a_j^+)\eta(a_j)-\d_x\sqrt\rho(b_j^-)\eta(b_j)\right)\\
=&\left<\eta,\tilde\lambda\right>+\left<\eta,\frac12\underset{j}{\sum}\left(\d_x\sqrt{\rho}(a_j^+)\delta_{a_j}-\d_x\sqrt{\rho}(b_j^-)\delta_{b_j}\right)\right>
\end{align*}
for any test function $\eta\in\mathcal{C}_c^\infty(\R)$, which concludes the proof
\end{proof}

\begin{rem}
The identity \eqref{eq:tildelambda} shows $\lambda dx$ is the absolutely continuous part of the Radon measure $\tilde\lambda$ with respect to Lebesgue measure, and the singular part of $\tilde\lambda$ is a sequence of Dirac-delta located at vacuum boundaries. Therefore $\tilde\lambda$ and $\lambda$ agree at the level of $\xi$ in the sense that 
\[
\xi=\sqrt{\rho}\tilde\lambda=\sqrt{\rho}\lambda.
\]

On the other hand, since $\tilde\lambda$ takes into account the possible jumps of $\d_x\sqrt\rho$ at vacuum boundaries, our conjecture is that $\tilde\lambda$ might be a useful tool to characterise the behaviour of vacuum boundaries or other functions, for example the polar factor or phase function, whose singularity is at vacuum boundaries. Indeed formally by using the WKB ansatz $\psi=\sqrt{\rho}e^{iS}$ and the Schr\"odinger equation, one has
\[
\d_t S+\mu=0,
\]
where $\mu$ is the chemical potential
\[
\mu=-\frac{\d_x^2\sqrt{\rho}}{2\sqrt\rho}+\frac12 v^2+f'(\rho).
\]
In the general case a rigorous definition of the phase function $S$ and chemical potential $\mu$ is not possible, however the previous equation above suggests the possibility to reconstruct the phase function $S$ from hydrodynamic data by using the identity
\[
\sqrt{\rho}\d_t S+\tilde\lambda=0.
\]
\end{rem}

\section*{Acknowledgments}
The first and the second author have been partially supported by PRIN-MIUR project 2015YCJY3A\_003 \emph{Hyperbolic Systems of Conservation Laws and Fluid Dynamics: Analysis and Applications}. The first author acknowledges partial support through the INdAM-GNAMPA project \emph{Esistenza, limiti singolari e comportamento asintotico per equazioni Eulero/Navier– Stokes–Korteweg}.

\end{document}